\author{Dixy Msapato}
\newcommand{\myfrac}
    [2]{\begin{array}{@{}c@{}}#1 \\[-0.75ex]#2\end{array}}
\newdimen\R
\newdimen\T
\let\amsamp=&
\newcommand\blfootnote[1]{%
  \begingroup
  \renewcommand\thefootnote{}\footnote{#1}%
  \addtocounter{footnote}{-1}%
  \endgroup
}
\theoremstyle{definition}
\newtheorem{theorem}{Theorem}[section]
\newtheorem{definition}[theorem]{Definition}
\newtheorem{proposition}[theorem]{Proposition}
\newtheorem{remark}[theorem]{Remark}
\newtheorem{corollary}[theorem]{Corollary}
\newtheorem{lemma}[theorem]{Lemma}
\def\a{\alpha}
\def\b{\beta}
\def\d{\delta}
\def\t{\tau}
\title{The Karoubi envelope and weak idempotent completion of an extriangulated category}
\date{}
\begin{document}

\maketitle
\begin{abstract}
We show that the idempotent completion and weak idempotent completion of an extriangulated category are also extriangulated. 
\end{abstract}

{\small{\textbf{Keywords}} : Extriangulated categories, Idempotent completion, Weak idempotent completion.}

{\small{\textbf{Mathematics Subject Classification (2020)}} : 18E05 (Primary) 18G15, 18G80 (Secondary)}
\thanks{}
\blfootnote{Address: School of Mathematics, University of Leeds, Leeds, LS2 9JT, United Kingdom. \\
	Contact: mmdmm@leeds.ac.uk \\
	\thanks{This research was supported by an EPSRC Doctoral Training Partnership (reference EP/R513258/1) through the University of Leeds. The author also wishes to thank their supervisor, Bethany Marsh for their continuous support and invaluable insight.}}
\section{Introduction.}
Extriangulated categories were introduced by Nakaoka and Palu in \cite{NakaokaPalu} as a simultaneous generalisation of exact categories and triangulated categories in the context of the study of cortosion pairs. The known classes of examples of extriangulated categories include exact categories and extension-closed subcategories of triangulated categories; see \cite[Example 2.13, Remark 2.18, Proposition 3.22(1)]{NakaokaPalu}. There are also examples of extriangulated categories which are not exact or triangulated; see for example, \cite[Proposition 3.30]{NakaokaPalu}, \cite[Example 4.14 and Corollary 4.12]{ZhouZhu}.

Let $\mathscr{A}$ be an additive category. A morphism $e \colon A \rightarrow A$ in $\mathscr{A}$ is said to be \textit{idempotent} if $e^{2}=e$. The category $\mathscr{A}$ is said to be \textit{idempotent complete} if every idempotent morphism in $\mathscr{A}$ admits a kernel. Every additive category $\mathscr{A}$ can be embedded fully faithfully into an idempotent complete category $\tilde{\mathscr{A}}$ called the \textit{idempotent completion} (also called the Karoubi envelope); see for example \cite[ E.g Remark 6.3]{buhler}. An additive category $\mathscr{A}$ is called \textit{weakly idempotent complete} if every retraction has a kernel, or equivalently if every section has a cokernel \cite[Definition 7.2]{buhler}. Every small additive category can also be fully faithfully embedded into a weakly idempotent complete category $\hat{\mathscr{A}}$ called the \textit{weak idempotent completion}; see for example \cite[Remark 7.8]{buhler}.

When $\mathscr{A}$ is a triangulated category, it has been shown that the idempotent completion $\tilde{\mathscr{A}}$ is also triangulated; see \cite[Theorem 1.5]{BalSch}. It has also been shown \cite[Theorem 2.16]{liu2014idempotent} that the idempotent completion of a left triangulated category is again left triangulated, and likewise for right triangulated categories. When $\mathscr{A}$ is an exact category, it has also been shown that the idempotent completion and the weak idempotent completion are exact; see \cite[Proposition 6.13, Remark 7.8]{buhler}. In this paper we will unify these results by showing that when $\mathscr{A}$ is extriangulated then the idempotent completion and the weak idempotent completion are also extriangulated. In doing so, we also add to the family of examples of extriangulated categories. 

Independent work by \cite{wwzz} has also shown that the idempotent completion of an extriangulated category is extriangulated. Although the result is the same, our work offers a different perspective. For example, the Ext$^{1}$ functor of the idempotent completion in our paper has a different description to that of \cite{wwzz}. Our description of the biadditive functor has the advantage of allowing us to easily observe that the Ext$^{1}$-groups of an idempotent completion $\tilde{\mathscr{A}}$ (and weak idempotent completion $\hat{\mathscr{A}}$) are subgroups of the Ext$^{1}$-groups of $\mathscr{A}$. In particular, the Ext$^{1}$ bifunctor on $\tilde{\mathscr{A}}$ ( and $\hat{\mathscr{A}}$) behaves like a subbifunctor of the Ext$^{1}$ bifunctor on $\mathscr{A}$, in the sense of \cite[Definition 3.6]{herschend2017n}. Our alternative perspective also leads us to a proof of the main theorem which is quite different to the proof presented in \cite{BalSch} for the triangulated case and \cite{wwzz} for extriangulated case. In our work, the role of the idempotent morphisms is clarified and the extriangles of the idempotent completion (and weak idempotent completion) have an explicit description which isn't available in the treatment of \cite{BalSch} and \cite{wwzz}. As a consequence, with some additional work, we can prove that the weak idempotent completion is also extriangulated as a corollary to the fact that the idempotent completion is extriangulated. We do so by showing that the weak idempotent completion is an extension-closed subcategory of the idempotent completion; to the best of our knowledge, this additional result been shown in the triangulated or exact or extriangulated case.

This paper is organised as follows: in \S 2, we recall the necessary theory of idempotent completions, weak idempotent completions and the theory of extriangulated categories. Finally, in \S 3, we show that the idempotent completion and weak idempotent completion of an extriangulated category are also extriangulated. 

\section{Idempotent completion and extriangulated categories.}
In this section, we recall the basic theory of idempotent completions of additive categories, weak idempotent completions of additive categories and the theory extriangulated categories as introduced in \cite{NakaokaPalu}. 

Let us set the common notation for this section. Let $\mathscr{A}$ be an additive category. Given objects $X,Y$ in $\mathscr{A}$ we will write $\mathscr{A}(X,Y)$ for the group of morphisms $X \rightarrow Y$. For an object $X$ in $\mathscr{A}$ we denote the identity morphism of $X$ by $1_X$. 
\subsection{Idempotent completeness and weakly idempotent completeness.}

\begin{definition}\label{IdemComp}\cite[Definition 1.21,1.2.2]{karoubi}.
Let $\mathscr{A}$ be an additive category. We say that $\mathscr{A}$ is idempotent complete if for every idempotent morphism $p \colon A \rightarrow A$ ( i.e. $p^2=p$) in $\mathscr{A}$, there is a decomposition $A \cong K \oplus I$ of $A$ such that $p \cong \begin{pmatrix}
0 & 0\\
0 & 1_{I}
\end{pmatrix}$ with respect to this decomposition.
\end{definition}

\begin{proposition}\cite[Remark 6.2]{buhler}.
An additive category $\mathscr{A}$ is idempotent complete if and only if every idempotent morphism admits a kernel.
\end{proposition}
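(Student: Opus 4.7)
The plan is to prove the two directions of the biconditional separately. The forward implication follows almost immediately from Definition~\ref{IdemComp}: given an idempotent $p \colon A \to A$, unpacking the decomposition $A \cong K \oplus I$ under which $p$ corresponds to $\begin{pmatrix} 0 & 0 \\ 0 & 1_I \end{pmatrix}$, the inclusion of the first summand $K \hookrightarrow K \oplus I$ is manifestly a kernel of this matrix endomorphism (a routine check against the universal property), and transporting back across the isomorphism produces a kernel of $p$.

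The reverse direction is the substantive one. The first move is the standard observation that $1_A - p$ is also idempotent, since $(1-p)^2 = 1 - 2p + p^2 = 1 - p$. By hypothesis both $p$ and $1_A - p$ admit kernels, which I denote $k \colon K \to A$ and $i \colon I \to A$. Since $p \circ (1-p) = 0$, the universal property of $k$ produces a unique $r \colon A \to K$ with $kr = 1_A - p$; dually, from $(1-p) \circ p = 0$ I obtain a unique $s \colon A \to I$ with $is = p$. My plan is then to show that $\begin{pmatrix} k & i \end{pmatrix} \colon K \oplus I \to A$ and $\begin{pmatrix} r \\ s \end{pmatrix} \colon A \to K \oplus I$ are mutually inverse, thereby exhibiting the required decomposition $A \cong K \oplus I$.

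One composite is immediate: $kr + is = (1_A - p) + p = 1_A$. The other reduces to four matrix-entry identities, each of which becomes a trivial verification after postcomposition with the monomorphism $k$ or $i$; for instance, $k(rk) = (1_A - p)k = k$ forces $rk = 1_K$, while $k(ri) = (1_A - p)i = 0$ forces $ri = 0$, and the entries $sk, si$ are treated symmetrically (noting that $pi = i$ follows from $(1_A - p)i = 0$). A final short matrix computation then shows $p$ corresponds to $\begin{pmatrix} 0 & 0 \\ 0 & 1_I \end{pmatrix}$ under this decomposition, verifying Definition~\ref{IdemComp}. I do not anticipate a genuine obstacle: the entire reverse implication rests on the universal property of kernels and their being monic. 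The only step demanding real care is the bookkeeping of the four matrix entries, where one must correctly track which of $p$ or $1_A - p$ is the idempotent annihilating each composition.
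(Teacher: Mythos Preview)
Your proof is correct. Note, however, that the paper itself does not supply a proof of this proposition: it is stated with a citation to \cite[Remark 6.2]{buhler} and left without argument. Your argument is the standard one (and essentially the one indicated in B\"uhler): in the reverse direction you correctly exploit that both $p$ and $1_A-p$ are idempotent, take their respective kernels, and assemble the biproduct decomposition by checking the four matrix entries via monicity of the kernel maps. There is nothing to compare against in the present paper, and nothing to correct in your write-up.
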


Every additive category $\mathscr{A}$ embeds fully faithfully into an idempotent complete category $\tilde{\mathscr{A}}$. The category $\tilde{\mathscr{A}}$ is commonly referred to as the \textit{idempotent completion} or as the \textit{Karoubi envelope} of $\mathscr{A}$. 

\begin{definition}\cite[1.2 Definition]{BalSch}.
Let $\mathscr{A}$ be an additive category. The \textit{idempotent completion} of $\mathscr{A}$ is denoted by $\tilde{\mathscr{A}}$  and is defined as follows. The objects of $\tilde{\mathscr{A}}$ are the pairs $(A,p)$ where $A$ is an object of $\mathscr{A}$ and $p \colon A \rightarrow A$ is an idempotent morphism. A morphism in $\tilde{\mathscr{A}}$ from $(A,p)$ to $(B,q)$ is a morphism $\sigma \colon A \rightarrow B \in \mathscr{A}$ such that $\sigma p = q \sigma = \sigma$. For any object $(A,p)$ in $\tilde{\mathscr{A}}$, the identity morphism $1_{(A,p)} = p$. 
\end{definition}

\begin{proposition}\label{karoubian}\cite[See e.g. Remark 6.3]{buhler}.
Let $\mathscr{A}$ be an additive category. The Karoubi envelope $\tilde{\mathscr{A}}$ is an idempotent complete category. The biproduct in $\tilde{\mathscr{A}}$ is defined as $(A,p) \oplus (B,q) = (A \oplus B, p \oplus q)$. There is a fully faithful additive functor $i_{\mathscr{A}} \colon \mathscr{A} \rightarrow \tilde{\mathscr{A}}$ defined as follows. For an object $A$ in $\mathscr{A}$, we have that $i_{\mathscr{A}}(A) = (A,1_A)$ and for a morphism $f$ in $\mathscr{A}$, we have that $i_{\mathscr{A}}(f)=f$.
\end{proposition}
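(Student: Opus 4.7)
The plan is to verify the three assertions of the proposition in turn: (i) $\tilde{\mathscr{A}}$ is additive and idempotent complete, (ii) the prescribed formula gives biproducts, (iii) $i_{\mathscr{A}}$ is a fully faithful additive functor.

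First I would observe that each hom-set $\tilde{\mathscr{A}}((A,p),(B,q))$ sits inside $\mathscr{A}(A,B)$ as the subset cut out by the two linear equations $f p = f$ and $q f = f$; this is closed under addition in $\mathscr{A}(A,B)$, so the hom-sets inherit abelian group structure, and composition is bilinear. The object $(0,0)$ is a zero object. For biproducts, I would take the candidate $(A\oplus B, p\oplus q)$ with structure maps obtained by pre/post-composing the usual biproduct data of $\mathscr{A}$ with the idempotents: explicitly $\pi'_A\defeq p\pi_A$, $\iota'_A\defeq \iota_A p$, and similarly for $B$. One checks each of these is a legitimate morphism in $\tilde{\mathscr{A}}$ using $p^2=p$ and $q^2=q$, that $\pi'_A\iota'_A=p$ (which is the identity of $(A,p)$), $\pi'_B\iota'_A=0$, and $\iota'_A\pi'_A+\iota'_B\pi'_B=p\oplus q$ (the identity of $(A\oplus B,p\oplus q)$). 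This establishes additivity and the biproduct formula simultaneously.

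Next, for idempotent completeness, let $e\colon (A,p)\to (A,p)$ be idempotent in $\tilde{\mathscr{A}}$. Since composition in $\tilde{\mathscr{A}}$ agrees with that in $\mathscr{A}$, I have $e^2=e$, $ep=pe=e$ in $\mathscr{A}$. The key calculation is that $p-e$ is also idempotent: $(p-e)^2=p^2-pe-ep+e^2=p-e-e+e=p-e$. So $(A,e)$ and $(A,p-e)$ are objects of $\tilde{\mathscr{A}}$. I would then exhibit the kernel of $e$ explicitly as $(A,p-e)$ with inclusion $p-e\colon (A,p-e)\to (A,p)$: the composite $e\circ(p-e)=ep-e^2=e-e=0$, and any $\tau\colon(X,r)\to(A,p)$ killed by $e$ satisfies $\tau=p\tau=(p-e)\tau+e\tau=(p-e)\tau$, giving the unique factorisation. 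Equivalently, one verifies $(A,p)\cong (A,e)\oplus(A,p-e)$ via the biproduct formula above, which shows $e$ actually splits as $\left(\begin{smallmatrix}0&0\\ 0&1_{(A,e)}\end{smallmatrix}\right)$ in the sense of Definition~\ref{IdemComp}.

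Finally, for the functor $i_{\mathscr{A}}$: the object map $A\mapsto(A,1_A)$ makes sense since $1_A^2=1_A$, and for a morphism $f\colon A\to B$ in $\mathscr{A}$ we trivially have $f\cdot 1_A=1_B\cdot f=f$, so $i_{\mathscr{A}}(f)=f$ is a morphism in $\tilde{\mathscr{A}}$. Functoriality is immediate. Fully faithfulness is automatic, because $\tilde{\mathscr{A}}((A,1_A),(B,1_B))=\{f\in\mathscr{A}(A,B):f\cdot 1_A=1_B\cdot f=f\}=\mathscr{A}(A,B)$. Additivity follows from $1_{A\oplus B}=1_A\oplus 1_B$, so that $i_{\mathscr{A}}(A\oplus B)=(A\oplus B,1_A\oplus 1_B)$ coincides with the biproduct $i_{\mathscr{A}}(A)\oplus i_{\mathscr{A}}(B)$ established in step two. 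None of these steps present a real obstacle; the only point requiring care is getting the biproduct injections and projections in $\tilde{\mathscr{A}}$ right, since the naive maps from $\mathscr{A}$ must be twisted by the relevant idempotents to land in the correct hom-sets.
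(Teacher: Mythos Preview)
Your proof is correct and is the standard verification. Note, however, that the paper does not actually prove this proposition: it is stated with a citation to \cite[Remark 6.3]{buhler} and no argument is given, as it is a well-known background result. There is therefore nothing to compare against; your write-up simply supplies the details that the paper omits.
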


The Karoubi envelope is unique with respect to the following universal property. 
\begin{proposition}\cite[Proposition 6.10]{buhler}.
Let $\mathscr{A}$ be an additive category and let $\mathscr{B}$ be an idempotent complete category. For every additive functor $F \colon \mathscr{A} \rightarrow \mathscr{B}$, there exists a functor $\tilde{F} \colon \tilde{\mathscr{A}} \rightarrow \mathscr{B}$ and a natural isomorphism $\alpha \colon F \Rightarrow \tilde{F}i_{\mathscr{A}}.$
\end{proposition}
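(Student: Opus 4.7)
The plan is to build $\tilde F$ by splitting, inside $\mathscr B$, the idempotents coming from the pairs $(A,p)$. Concretely, for each object $(A,p)$ of $\tilde{\mathscr A}$, additivity of $F$ gives $F(p)^2 = F(p^2) = F(p)$, so $F(p)$ is idempotent in $\mathscr B$. Since $\mathscr B$ is idempotent complete, Definition \ref{IdemComp} lets us choose a decomposition $F(A) \cong K_{A,p} \oplus I_{A,p}$ with respect to which $F(p)$ becomes $\bigl(\begin{smallmatrix} 0 & 0 \\ 0 & 1_{I_{A,p}} \end{smallmatrix}\bigr)$. I would set $\tilde F(A,p) \defeq I_{A,p}$ and write $\iota_{A,p} \colon \tilde F(A,p) \hookrightarrow F(A)$ and $\pi_{A,p} \colon F(A) \twoheadrightarrow \tilde F(A,p)$ for the corresponding injection and projection, so that $\pi_{A,p}\iota_{A,p} = 1_{\tilde F(A,p)}$ and $\iota_{A,p}\pi_{A,p} = F(p)$. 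When $p = 1_A$ one may take $K_{A,1_A}=0$ and $I_{A,1_A} = F(A)$ with $\iota$ and $\pi$ identities, and this choice is the one I would use to obtain the natural isomorphism at the end.

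Next I would define $\tilde F$ on morphisms. Given $\sigma \colon (A,p) \to (B,q)$, the relations $\sigma p = q\sigma = \sigma$ become $F(\sigma) F(p) = F(q) F(\sigma) = F(\sigma)$, so $F(\sigma)$ ``lives between the images''. I put
\[
\tilde F(\sigma) \defeq \pi_{B,q} \circ F(\sigma) \circ \iota_{A,p} \colon \tilde F(A,p) \longrightarrow \tilde F(B,q).
\]
To see this is functorial, observe first that $\tilde F(1_{(A,p)}) = \tilde F(p) = \pi_{A,p} F(p) \iota_{A,p} = \pi_{A,p}\iota_{A,p}\pi_{A,p}\iota_{A,p} = 1_{\tilde F(A,p)}$. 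For composition, given another morphism $\tau \colon (B,q) \to (C,r)$, one computes $\tilde F(\tau) \tilde F(\sigma) = \pi_{C,r} F(\tau) \iota_{B,q}\pi_{B,q} F(\sigma) \iota_{A,p} = \pi_{C,r} F(\tau) F(q) F(\sigma) \iota_{A,p} = \pi_{C,r} F(\tau\sigma)\iota_{A,p} = \tilde F(\tau\sigma)$, using $F(\tau) F(q) = F(\tau q) = F(\tau)$.

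Finally I would produce $\alpha$. Since I chose $\iota_{A,1_A} = 1_{F(A)} = \pi_{A,1_A}$, we have $\tilde F(i_{\mathscr A}(A)) = \tilde F(A,1_A) = F(A)$, and setting $\alpha_A \defeq 1_{F(A)}$ gives an obvious candidate. Naturality in $A$ reduces, for $f \colon A \to B$ in $\mathscr A$, to the identity $\tilde F(i_{\mathscr A}(f)) = \pi_{B,1_B} F(f) \iota_{A,1_A} = F(f)$, which is immediate from the choices made. One may of course make other choices of decomposition at objects of the form $(A,1_A)$, and then $\alpha_A$ would be the canonical isomorphism $F(A) \cong \tilde F(A, 1_A)$ induced by the splitting; this still produces a natural isomorphism, but the calculation is marginally less clean.

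The only part that requires genuine care is the bookkeeping around the chosen splittings: strictly speaking $\tilde F$ depends on the choices of $(K_{A,p}, I_{A,p}, \iota_{A,p}, \pi_{A,p})$, and one should note that different choices yield naturally isomorphic functors, so $\tilde F$ is well defined up to unique isomorphism. I expect this is the main (mild) obstacle; the functoriality and naturality checks themselves are entirely formal consequences of the two identities $\iota_{A,p}\pi_{A,p} = F(p)$ and $\pi_{A,p}\iota_{A,p} = 1_{\tilde F(A,p)}$ together with the defining relations $\sigma p = q\sigma = \sigma$ in $\tilde{\mathscr A}$.
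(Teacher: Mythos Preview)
Your argument is correct and is exactly the standard construction: split $F(p)$ in the idempotent complete target, define $\tilde F$ on objects as the image summand and on morphisms by $\pi\circ F(-)\circ\iota$, and check functoriality using $\iota\pi = F(p)$ and $\sigma p = q\sigma = \sigma$. There is nothing to compare against, however, since the paper does not give its own proof of this proposition; it is stated as a citation of \cite[Proposition 6.10]{buhler} and used only as background on the universal property of the Karoubi envelope.
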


We now introduce the related notion of a \textit{weakly idempotent complete} category. To do, we must first recall the following definition which stems from work by Thomason on exact categories with \textit{weakly split idempotents}, see \cite[A.5.1.]{thomason}.

\begin{definition}\cite[\S 7]{buhler}. Let $\mathscr{A}$ be an arbitrary category. A morphism $r \colon B \rightarrow C$ is called a \textit{retraction} if there exists a morphism $q \colon C \rightarrow B$ such that $rq=1_C$. A morphism $s \colon A \rightarrow B$ is called a \textit{section} if there exists a morphism $t \colon B \rightarrow A$ such that $ts=1_A$. 
\end{definition}
If $r \colon B \rightarrow C$ is a retraction with a section $s \colon C \rightarrow B$ then the composition $sr$ is an idempotent morphism. This idempotent gives a decomposition of $B$ in the sense of Definition \ref{IdemComp} if the morphism $r$ admits a kernel. See \cite[Remark 7.4]{buhler} for more details. 

\begin{proposition}\cite[Lemma 7.1]{buhler}. Let $\mathscr{A}$ be an additive category. Then the following statements are equivalent:
\begin{enumerate}
\item Every section has a cokernel.
\item Every retraction has a kernel.
\end{enumerate}
\end{proposition}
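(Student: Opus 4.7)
The plan is to prove the two implications in parallel, using a symmetric construction that to each section–retraction pair attaches an idempotent endomorphism of the middle object. Explicitly, given a section $s \colon A \to B$ with chosen retraction $t \colon B \to A$ (so $ts = 1_A$), set $e \defeq 1_B - st$; a short calculation using $ts = 1_A$ shows $e^2 = e$, $es = 0$, and $te = 0$. The dual construction $e \defeq 1_B - qr$ associated to a retraction $r \colon B \to C$ with chosen section $q \colon C \to B$ behaves analogously, satisfying $e^2 = e$, $eq = 0$, and $re = 0$.

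For the implication $(2) \Rightarrow (1)$, I would take a section $s$, note that $t$ is a retraction, and apply (2) to obtain a kernel $k \colon K \to B$ of $t$. The relation $te = 0$ forces a unique factorization $e = kv$ with $v \colon B \to K$, and my candidate for the cokernel of $s$ is $v$. That $vs = 0$ follows from $kvs = es = 0$ together with $k$ being monic; that every $f \colon B \to Y$ with $fs = 0$ factors through $v$ follows from the decomposition $f = f \cdot st + f(1_B - st) = fe = (fk)v$, using $fs = 0$.

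The main obstacle — and the only point where the idempotency of $e$ is genuinely used — is the uniqueness clause in the universal property, which reduces to showing that $v$ is an epimorphism. For this, I would compute $kvk = ek = k - stk = k$ (the last equality using $tk = 0$) and cancel the monomorphism $k$ to obtain $vk = 1_K$; hence $v$ is a split epimorphism, so factorizations through $v$ are unique. The direction $(1) \Rightarrow (2)$ is entirely dual: applying (1) to the section $q$ yields a cokernel $c \colon B \to D$ of $q$, the relation $eq = 0$ gives $e = uc$ for a unique $u \colon D \to B$, and one checks that $u$ is the kernel of $r$ by the mirror argument. The analogous identity $cuc = c$ together with $c$ being an epimorphism forces $cu = 1_D$, making $u$ a split monomorphism and supplying the required uniqueness.
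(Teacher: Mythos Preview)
Your argument is correct. Note, however, that the paper does not supply its own proof of this proposition: it is simply quoted from \cite[Lemma 7.1]{buhler} as background, so there is nothing in the paper to compare against. For what it is worth, your approach---attaching the idempotent $e = 1_B - st$ to a section--retraction pair and then using the kernel of $t$ (respectively the cokernel of $q$) to manufacture the desired cokernel (respectively kernel)---is the standard one and matches B\"uhler's proof in spirit. The key computation $vk = 1_K$ (and dually $cu = 1_D$) that you isolate is exactly what makes the uniqueness clause go through, and you have handled it cleanly.
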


\begin{definition}\cite[Definition 7.2]{buhler}.
Let $\mathscr{A}$ be an additive category. Then $\mathscr{A}$ is said to be \textit{weakly idempotent complete} if every retraction has a kernel. Equivalently, $\mathscr{A}$ is weakly idempotent complete if every section has a cokernel. 
\end{definition}

Every \textit{small} additive category $\mathscr{A}$ embeds fully faithfully into a weakly idempotent complete category $\hat{\mathscr{A}}$. We call the category $\hat{\mathscr{A}}$ a weak idempotent completion of $\mathscr{A}$. The construction of the weak idempotent completion $\hat{\mathscr{A}}$ is similar to that of the idempotent completion $\tilde{\mathscr{A}}$.

\begin{definition}\cite[Definition 3.1]{selinger}. Let $\mathscr{A}$ be any category and $A$ an object in $\mathscr{C}$. An idempotent morphism $e \colon A \rightarrow A$ is said to \textit{split} if it admits a retraction $r \colon A \rightarrow X$ and a section $s \colon X \rightarrow A$ such that $s \circ r = e$ and $r \circ s = 1_{X}$. 
\end{definition}

\begin{definition}\label{weaklyIdemComp}\cite[1.12]{Neeman}.
Let $\mathscr{A}$ be a small additive category. The \textit{weak idempotent completion} of $\mathscr{A}$ is denoted by $\hat{\mathscr{A}}$ and is defined as follows. The objects of $\hat{\mathscr{A}}$ are the pairs $(A,p)$ where $A$ is an object of $\mathscr{A}$ and $p \colon A \rightarrow A$ is an idempotent morphism that splits. A morphism in $\hat{\mathscr{A}}$ from $(A,p)$ to $(B,q)$ is a morphism $\sigma \colon A \rightarrow B \in \mathscr{A}$ such that $\sigma p = q \sigma = \sigma$.  For any object $(A,p)$ in $\hat{\mathscr{A}}$, the identity morphism $1_{(A,p)}=p.$
\end{definition}

\begin{proposition}\label{neeman}\cite[E.g Remark 7.8]{buhler}.
Let $\mathscr{A}$ be a small additive category. The weak idempotent completion $\hat{\mathscr{A}}$ is weakly idempotent complete. The biproduct in $\hat{\mathscr{A}}$ is defined as $(A,p) \oplus (B,q) = (A \oplus B, p \oplus q)$. There is a fully faithful additive functor $j_{\mathscr{A}} \colon \mathscr{A} \rightarrow \hat{\mathscr{A}}$ defined as follows. For an object $A$ in $\mathscr{A}$, we have that $j_{\mathscr{A}}(A) = (A,1_A)$ and for a morphism $f$ in $\mathscr{A}$, we have that $j_{\mathscr{A}}(f)=f$.
\end{proposition}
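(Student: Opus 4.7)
The plan is to verify three things in order: the additive structure with the prescribed biproduct, the properties of $j_{\mathscr{A}}$, and weak idempotent completeness. The approach throughout exploits the fact that $\hat{\mathscr{A}}$ sits as a full subcategory of the Karoubi envelope $\tilde{\mathscr{A}}$, distinguished only by a splitting condition on the idempotent; thus the routine parts are inherited from Proposition \ref{karoubian} and the real work lies in weak idempotent completeness.

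For the additive structure, I would first check closure under biproducts: given splittings $p = s_1 r_1$ with $r_1 s_1 = 1$ and $q = s_2 r_2$ with $r_2 s_2 = 1$ in $\mathscr{A}$, the pair $(s_1 \oplus s_2,\, r_1 \oplus r_2)$ splits $p \oplus q$, showing $(A \oplus B, p \oplus q) \in \hat{\mathscr{A}}$. The remaining additive axioms transfer directly from $\tilde{\mathscr{A}}$. For $j_{\mathscr{A}}$, the object $(A, 1_A)$ belongs to $\hat{\mathscr{A}}$ because the identity splits through $A$ itself; the relations $f \cdot 1_A = 1_B \cdot f = f$ are automatic, and functoriality, additivity, and full faithfulness copy the arguments for $i_{\mathscr{A}}$.

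For weak idempotent completeness, suppose $r \colon (A,p) \to (B,q)$ is a retraction in $\hat{\mathscr{A}}$ with section $s$. I propose the kernel to be $(A, p - sr)$ with inclusion $p - sr$. Unpacking into $\mathscr{A}$, the morphism axioms give $rp = qr = r$, $ps = sq = s$, and $rs = q$; from these $(p - sr)^2 = p - sr$ and $r(p - sr) = r - qr = 0$ follow by direct computation. For the universal property, any $t \colon (C,e) \to (A,p)$ with $rt = 0$ satisfies $(p - sr)\,t = pt - s(rt) = t$, so $t$ itself is the unique factorisation through $(A, p - sr)$.

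The main obstacle is to show that $(A, p - sr)$ actually lies in $\hat{\mathscr{A}}$, that is, to exhibit the required splitting data in $\mathscr{A}$. The key observation is that $sr$ itself splits in $\mathscr{A}$: using a splitting $q = s_q r_q$ with $r_q s_q = 1_Q$ together with $rs = q$, one verifies that $u = s s_q$ and $v = r_q r$ satisfy $uv = sr$ and $vu = r_q\, q\, s_q = 1_Q$. Combining this splitting of $sr$ with the given splitting of $p$ and the orthogonality relations $sr \cdot (p - sr) = 0 = (p - sr) \cdot sr$, one can assemble, via a block-matrix argument, the splitting witnessing that $(A, p - sr)$ belongs to $\hat{\mathscr{A}}$. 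This step is where the definition of the weak idempotent completion is genuinely used (as opposed to the full Karoubi envelope), and is where I expect the verification to require the most care.
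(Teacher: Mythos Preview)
The paper does not prove this proposition; it is quoted from B\"uhler and Neeman without argument, so there is no proof in the paper to compare against. Evaluating your argument on its own merits, there is a genuine gap at precisely the step you flag as delicate, and in fact that gap cannot be closed under the definition of $\hat{\mathscr{A}}$ recorded in the paper.

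Your identification of $(A,\,p-sr)$ as the kernel in $\tilde{\mathscr{A}}$ and the verification of its universal property are correct. The problem is membership in $\hat{\mathscr{A}}$. The block-matrix sketch does not go through: having splittings of $p$ (through some $P$) and of $sr$ (through some $Q$) with $sr\cdot p=p\cdot sr=sr$ only tells you, after transporting along the splitting of $p$, that there is a split idempotent $e=r_p(sr)s_p$ on $P$ with image $Q$; whether $p-sr$ splits is then exactly whether $1_P-e$ splits in $\mathscr{A}$, i.e.\ whether the retraction $P\to Q$ already admits a kernel in $\mathscr{A}$. Nothing in the hypotheses guarantees this. More structurally: with the paper's definition (objects $(A,p)$ with $p$ itself split), every object of $\hat{\mathscr{A}}$ is isomorphic to one in the image of $j_{\mathscr{A}}$, since if $p=s_pr_p$ with $r_ps_p=1_X$ then $r_p$ and $s_p$ are mutually inverse isomorphisms between $(A,p)$ and $(X,1_X)$. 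Thus $j_{\mathscr{A}}$ is an equivalence and $\hat{\mathscr{A}}$ is weakly idempotent complete only when $\mathscr{A}$ already is; your missing step is unfillable in general.

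The construction one actually wants takes those $(A,p)\in\tilde{\mathscr{A}}$ which become isomorphic to an object of $\mathscr{A}$ after adding a direct summand from $\mathscr{A}$, i.e.\ for which there exist $X,Y\in\mathscr{A}$ with $(A,p)\oplus j_{\mathscr{A}}(X)\cong j_{\mathscr{A}}(Y)$ in $\tilde{\mathscr{A}}$. Under that definition your strategy does succeed: from $(A,\,p-sr)\oplus(B,q)\cong(A,p)$ together with the stabilising data for $(A,p)$ and $(B,q)$ one assembles the required stabilisation for $(A,\,p-sr)$ by adding the two stabilising summands. So your outline is sound once the definition is corrected, but as literally stated the crucial membership step fails.
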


\begin{proposition}\cite[Remark 7.8]{buhler}.
Let $\mathscr{A}$ be a small additive category and let $\mathscr{B}$ be a weakly idempotent complete category. For every additive functor $F \colon \mathscr{A} \rightarrow \mathscr{B}$, there exists a functor $\hat{F} \colon \hat{\mathscr{A}} \rightarrow \mathscr{B}$ and a natural isomorphism $\alpha \colon F \Rightarrow \hat{F}j_{\mathscr{A}}.$
\end{proposition}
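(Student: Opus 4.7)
My plan is to construct $\hat{F}$ directly by exploiting that additive functors preserve split idempotents. Given an object $(A,p) \in \hat{\mathscr{A}}$ with splitting $p = s_A r_A$, where $r_A \colon A \to X_A$ and $s_A \colon X_A \to A$ satisfy $r_A s_A = 1_{X_A}$, the relations $F(r_A)F(s_A) = 1_{F(X_A)}$ and $F(s_A)F(r_A) = F(p)$ show that $F(p)$ splits in $\mathscr{B}$ through $F(X_A)$. Using smallness of $\mathscr{A}$, I would fix such a splitting for each object of $\hat{\mathscr{A}}$, arranging in particular that $(A, 1_A) = j_{\mathscr{A}}(A)$ uses the trivial splitting $X_A = A$, $r_A = s_A = 1_A$. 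Then set $\hat{F}(A,p) := F(X_A)$.

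On morphisms, for $\sigma \colon (A,p) \to (B,q)$ I would define $\hat{F}(\sigma) := F(r_B)F(\sigma)F(s_A)$. The identities check out: $\hat{F}(1_{(A,p)}) = \hat{F}(p) = F(r_A s_A r_A s_A) = 1_{F(X_A)}$. Composition follows from the relation $q\sigma = \sigma$, which collapses the middle $F(s_B)F(r_B) = F(q)$ to give $\hat{F}(\tau\sigma) = F(r_C \tau \sigma s_A) = \hat{F}(\tau)\hat{F}(\sigma)$; additivity is immediate from additivity of $F$. By construction $\hat{F}(j_{\mathscr{A}}(A)) = F(A)$ and $\hat{F}(j_{\mathscr{A}}(f)) = F(f)$, so the natural isomorphism $\alpha \colon F \Rightarrow \hat{F} j_{\mathscr{A}}$ can be taken to be the identity transformation $\alpha_A := 1_{F(A)}$, whose naturality is trivial.

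The main obstacle I anticipate is the choice-dependence of the construction: two different systems of splittings yield two extensions $\hat{F}, \hat{F}'$, and these must be shown to be canonically naturally isomorphic. This amounts to checking that, for any two splittings $(r,s,X)$ and $(r',s',X')$ of the same $p$, the isomorphism $r's \colon X \to X'$ in $\mathscr{A}$ (with inverse $rs'$) is sent by $F$ to an isomorphism intertwining the two candidate extensions on both objects and morphisms. Smallness of $\mathscr{A}$ ensures that one global choice of splittings can be made, so this non-canonicity disappears once a choice is fixed. I note that weak idempotent completeness of $\mathscr{B}$ does not enter this direct construction, since additive functors automatically transport splittings in $\mathscr{A}$ to splittings in $\mathscr{B}$; that hypothesis becomes essential only under the alternative formulation of $\hat{\mathscr{A}}$ as the full subcategory of $\tilde{\mathscr{A}}$ consisting of summands of $\mathscr{A}$-objects, in which case one instead splits the idempotent $F(p)$ inside $\mathscr{B}$ itself using the kernel of the retraction $F(r_A)$.
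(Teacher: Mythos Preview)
The paper does not supply its own proof of this proposition; it is stated as a citation of \cite[Remark 7.8]{buhler} and left unproved. So there is no argument in the paper to compare against.

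Your construction is correct. With the paper's Definition \ref{weaklyIdemComp}, every object $(A,p)$ of $\hat{\mathscr{A}}$ comes with the \emph{existence} (not the data) of a splitting $p = s_A r_A$ with $r_A s_A = 1_{X_A}$; smallness lets you fix one such splitting per object, and the convention that $j_{\mathscr{A}}(A) = (A,1_A)$ uses the trivial splitting makes $\alpha$ the identity. The functoriality checks are fine: for composition, $\hat{F}(\tau)\hat{F}(\sigma) = F(r_C \tau q \sigma s_A)$ collapses via $\tau q = \tau$ (or $q\sigma = \sigma$) to $F(r_C \tau \sigma s_A) = \hat{F}(\tau\sigma)$. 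Your remark about choice-dependence is also right: two systems of splittings $(r,s,X)$ and $(r',s',X')$ of the same $p$ are linked by the mutually inverse isomorphisms $r's$ and $rs'$, and applying $F$ gives a natural isomorphism between the two resulting extensions.

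Your final observation is sharp and worth keeping: under this particular model of $\hat{\mathscr{A}}$, the hypothesis that $\mathscr{B}$ be weakly idempotent complete is not actually used, because the splittings already live in $\mathscr{A}$ and any additive functor transports them. The hypothesis is there to make the universal property meaningful (it is what distinguishes $\hat{\mathscr{A}}$ from $\tilde{\mathscr{A}}$ when one asks for uniqueness of the extension, or when one works with the equivalent description of $\hat{\mathscr{A}}$ as retracts inside $\tilde{\mathscr{A}}$), but for the bare existence statement as written, your argument goes through without it.
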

The reader is directed to \S 6 and \S 7 of \cite{buhler} for a more extensive exposition of idempotent completeness and weakly idempotent completeness.

\subsection{Extriangulated categories.}
In this section, we will recall mostly from \cite{NakaokaPalu} the basic theory of extriangulated categories needed for this paper. Through out this subsection, $\mathscr{C}$ will be an additive category equipped with a biadditive functor $\mathbb{E} \colon \mathscr{C}^{\text{op}} \times \mathscr{C} \rightarrow Ab$, where $Ab$ is the category of abelian groups. 

\begin{definition}\cite[Definition 2.1]{NakaokaPalu}.
Let $A,C$ be objects of $\mathscr{C}$. An element $\d \in \mathbb{E}(C,A)$ is called an $\mathbb{E}$-extension. Formally, an $\mathbb{E}$-extension is a triple $(A,\d,C)$. 

Since $\mathbb{E}$ is a bifunctor, for any $a \in \mathscr{C}(A,A^{\prime})$ and $c \in \mathscr{C}(C^{\prime},C)$, we have the following $\mathbb{E}$-extensions:
$$a_{*}\d := \mathbb{E}(C,a)(\d) \in \mathbb{E}(C,A^{\prime}),$$
$$c^{*}\d := \mathbb{E}(c^{\text{op}},A)(\d) \in \mathbb{E}(C^{\prime},A) \text{ and }$$
$$c^{*}a_{*}\d = a_{*}c^{*}\d := \mathbb{E}(c^{\text{op}},a)(\d) \in \mathbb{E}(C^{\prime},A^{\prime}).$$
\end{definition}
We will abuse notation by writing $\mathbb{E}(c,-)$ instead of $\mathbb{E}(c^{\text{op}},-).$

\begin{definition}\cite[Definition 2.3]{NakaokaPalu}.
Let $(A,\d,C)$ and $(A^{\prime}, \d^{\prime}, C^{\prime})$ be any pair of $\mathbb{E}$-extensions. A morphism $(a,c) \colon \d \rightarrow \d^{\prime}$ of $\mathbb{E}$-extensions is a pair of morphisms $a \in \mathscr{C}(A,A^{\prime})$ and $c \in \mathscr{C}(C,C^{\prime})$ such that: $$ a_{*}\d = c^{*}\d^{\prime}.$$ 
\end{definition}

\begin{lemma}\label{*operation}\cite[Remark 2.4]{NakaokaPalu}.
Let $(A,\d,C)$ be an $\mathbb{E}$-extension. Then we have the following.
\begin{enumerate}
\item Any morphism $a \in \mathscr{C}(A,A^{\prime})$ induces a morphism of $\mathbb{E}$-extensions,
$$(a,1_C) \colon \d \rightarrow a_{*}\d.$$
\item Any morphism $c \in \mathscr{C}(C^{\prime},C)$ induces a morphism of $\mathbb{E}$-extensions,
$$(1_A,c) \colon c^{*}\d \rightarrow \d.$$
\end{enumerate}
\end{lemma}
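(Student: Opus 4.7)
The plan is to unfold the definition of a morphism of $\mathbb{E}$-extensions in each case and then invoke functoriality of $\mathbb{E}$ on identity morphisms. Recall that $(a,c) \colon \d \to \d'$ is a morphism of $\mathbb{E}$-extensions precisely when $a_*\d = c^*\d'$, and that since $\mathbb{E}$ is a functor in each variable we have $\mathbb{E}(1_X,-) = \mathrm{id}$ and $\mathbb{E}(-,1_Y) = \mathrm{id}$ on the corresponding Hom-groups.

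For part (1), I would take the candidate morphism $(a,1_C) \colon \d \to a_*\d$. The required equality is
\[
a_*\d \;=\; (1_C)^*(a_*\d).
\]
The right-hand side is $\mathbb{E}(1_C^{\mathrm{op}},a)(\d) = \mathbb{E}(1_C^{\mathrm{op}},\mathrm{id}_{A'}) \circ \mathbb{E}(\mathrm{id}_C,a)(\d)$, which reduces to $a_*\d$ because $\mathbb{E}(1_C^{\mathrm{op}},-)$ is the identity on $\mathbb{E}(C,A')$. This gives the desired equality.

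For part (2), the argument is dual: with the candidate $(1_A,c) \colon c^*\d \to \d$ the required equality is
\[
(1_A)_*(c^*\d) \;=\; c^*\d,
\]
and this holds because $\mathbb{E}(-,1_A)$ is the identity on $\mathbb{E}(C',A)$.

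There is no real obstacle here; the only thing worth being careful about is keeping the variance straight (the contravariant slot gives $c^*$ on the source rather than the target) and remembering that $\mathbb{E}$ being a bifunctor, as opposed to merely being biadditive on objects, is what is actually being used. Both statements are therefore immediate consequences of the unit axiom for the functor $\mathbb{E}$.
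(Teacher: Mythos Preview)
Your proposal is correct. The paper does not give its own proof of this lemma; it simply cites it as \cite[Remark 2.4]{NakaokaPalu} and states the result without argument. Your verification---unfolding the definition of a morphism of $\mathbb{E}$-extensions and using that $\mathbb{E}(1_C,-)$ and $\mathbb{E}(-,1_A)$ act as the identity---is exactly the intended (and essentially the only) justification.
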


\begin{definition}\cite[Definition 2.5]{NakaokaPalu}.
For any objects $A,C$ in $\mathscr{C}$, the zero element $0 \in \mathbb{E}(C,A)$ is called a \textit{split $\mathbb{E}$-extension}. 
\end{definition}

\begin{definition}\cite[Definition 2.6]{NakaokaPalu}.
Let $\d \in \mathbb{E}(C,A)$ and $\d^{\prime} \in \mathbb{E}(C^{\prime}, A^{\prime})$ be any pair of $\mathbb{E}$-extensions. Let $i_{C} \colon C \rightarrow C \oplus C^{\prime}$ and $i_{C^{\prime}} \colon C^{\prime} \rightarrow C \oplus C^{\prime}$ be the canonical inclusion maps. Let $p_{A} \colon A \oplus A^{\prime} \rightarrow A$,  and $p_{A^{\prime}} \colon A \oplus A^{\prime} \rightarrow A^{\prime}$ be the canonical projection maps. By the biadditivity of $\mathbb{E}$ we have the following isomorphism.
$$ \mathbb{E}(C \oplus C^{\prime}, A \oplus A^{\prime}) \cong \mathbb{E}(C,A) \oplus \mathbb{E}(C,A^{\prime}) \oplus \mathbb{E}(C^{\prime}, A) \oplus \mathbb{E}(C^{\prime}, A^{\prime})$$ 

Let $\d \oplus \d^{\prime} \in \mathbb{E}(C \oplus C^{\prime}, A \oplus A^{\prime})$ be the element corresponding to $(\d,0,0,\d^{\prime})$ via the above isomorphism. 
If $A = A^{\prime}$ and $C = C^{\prime}$, then the sum $\d + \d^{\prime} \in \mathbb{E}(C,A)$ is obtained by 
$$ \d + \d^{\prime} = \mathbb{E}(\Delta_{C},\nabla_{A})(\d \oplus \d^{\prime}),$$
where $\Delta_{C} = \begin{pmatrix}
1 \\ 1
\end{pmatrix} : C \rightarrow C \oplus C$, and $\nabla_{A} = \begin{pmatrix}
1,1
\end{pmatrix}: A \oplus A \rightarrow A.$
\end{definition}

\begin{definition}\label{equivReln}\cite[Definition 2.7]{NakaokaPalu}.
Let $A,C$ be a pair of objects in $\mathscr{C}$. Two sequences of morphisms $A \overset{x}{\longrightarrow} B \overset{y}{\longrightarrow} C$, and $A \overset{x^{\prime}}{\longrightarrow} B^{\prime} \overset{y^{\prime}}{\longrightarrow} C$ in $\mathscr{C}$ are said to be \textit{equivalent} if there exists an isomorphism $b \in \mathscr{C}(B,B^{\prime})$ such that the following diagram commutes. 
\begin{center}
\begin{tikzcd}
A \arrow[r, "x"] \arrow[d, equal]
& B \arrow[d, "b"] \arrow[r,"y"]
& C \arrow[d, equal] \\
 A \arrow[r,"x^{\prime}"]
&B^{\prime} \arrow[r,"y^{\prime}"]
&C \end{tikzcd}
\end{center}
We denote the equivalence class of a sequence  $A \overset{x}{\longrightarrow} B \overset{y}{\longrightarrow} C$, by 
$[A \overset{x}{\longrightarrow} B \overset{y}{\longrightarrow} C]$.
\end{definition}

\begin{definition}\cite[Definition 2.8]{NakaokaPalu}.
Let $A,B,C,A^{\prime}, B^{\prime}, C^{\prime}$ be objects in the category $\mathscr{C}$. 
\begin{enumerate}
\item We denote by $0$ the equivalence class $[A \overset{\big[\begin{smallmatrix}
1_A\\
0
\end{smallmatrix}\big]}{\longrightarrow} A \oplus C \overset{[\begin{smallmatrix} 0 & 1_C \end{smallmatrix}]}{\longrightarrow} C]$.

\item For any two equivalence classes  $[A \overset{x}{\longrightarrow} B \overset{y}{\longrightarrow} C]$ and $[A^{\prime} \overset{x^{\prime}}{\longrightarrow} B^{\prime} \overset{y^{\prime}}{\longrightarrow} C^{\prime}]$, we denote by $[A \overset{x}{\longrightarrow} B \overset{y}{\longrightarrow} C] \oplus [A^{\prime} \overset{x^{\prime}}{\longrightarrow} B^{\prime} \overset{y^{\prime}}{\longrightarrow} C^{\prime}]$ the equivalence class
$[A \oplus A^{\prime} \overset{x\oplus x^{\prime}}{\longrightarrow} B \oplus B^{\prime} \overset{y \oplus y^{\prime}}{\longrightarrow} C \oplus C^{\prime}].$
\end{enumerate}
\end{definition}

\begin{definition}\cite[Definition 2.9]{NakaokaPalu}.
Let $\mathfrak{s}$ be a correspondence associating an equivalence class $\mathfrak{s}(\d) = [A \overset{x}{\longrightarrow} B \overset{y}{\longrightarrow} C]$ to any $\mathbb{E}$-extension $\d \in \mathbb{E}(C,A)$.  We say that $\mathfrak{s}$ is a \textit{realisation} of $\mathbb{E}$ if the following condition $(\circ)$ holds. 

$(\circ)$ Let $\d \in \mathbb{E}(C,A)$ and $\d^{\prime} \in \mathbb{E}(C^{\prime},A^{\prime})$ be $\mathbb{E}$-extensions, with $\mathfrak{s}(\d) = [A \overset{x}{\longrightarrow} B \overset{y}{\longrightarrow} C]$ and $\mathfrak{s}(\d^{\prime}) = [A^{\prime} \overset{x^{\prime}}{\longrightarrow} B^{\prime} \overset{y^{\prime}}{\longrightarrow} C^{\prime}]$. Then for any morphism $(a,c) \colon \d \rightarrow \d^{\prime}$ of $\mathbb{E}$-extensions, there exists $b \in \mathscr{C}(B,B^{\prime})$ such that the following diagram commutes. 
\begin{center}
\begin{tikzcd}
A \arrow[r, "x"] \arrow[d, "a"]
& B \arrow[d, dashed, "b"] \arrow[r,"y"]
& C \arrow[d, "c"] \\
 A^{\prime} \arrow[r,"x^{\prime}"]
&B^{\prime} \arrow[r,"y^{\prime}"]
&C^{\prime} \end{tikzcd}
\end{center}
In this situation, we say that the triple of morphisms $(a,b,c)$ realises $(a,c)$. For $\d \in \mathbb{E}(C,A)$, we say that the sequence $A \overset{x}{\longrightarrow} B \overset{y}{\longrightarrow} C$ realises $\d$ if $\mathfrak{s}(\d)=[A \overset{x}{\longrightarrow} B \overset{y}{\longrightarrow} C]$.
\end{definition}

\begin{definition}\cite[Definition 2.10]{NakaokaPalu}.
A realisation $\mathfrak{s}$ is said to be an \textit{additive realisation} if the following conditions are satisfied,
\begin{enumerate}
\item For any objects $A,C$ in $\mathscr{C}$, the split $\mathbb{E}$-extension $ 0 \in \mathbb{E}(C,A)$ satisfies $$\mathfrak{s}(0)=0.$$
\item For any pair of extensions $\d$ and $\d^{\prime}$, we have that, $$\mathfrak{s}(\d \oplus \d^{\prime}) = \mathfrak{s}(\d) \oplus \mathfrak{s}(\d^{\prime}).$$
\end{enumerate} 
\end{definition}

We are now in a position to define an extriangulated category.

\begin{definition}\label{DefExtriang}\cite[Definition 2.12]{NakaokaPalu}. Let $\mathscr{C}$ be an additive category. An \textit{extriangulated category} is a triple $(\mathscr{C},\mathbb{E},\mathfrak{s})$ satisfying the following axioms.

\begin{itemize}

\item[] (ET1) The functor $\mathbb{E} \colon \mathscr{C}^{\text{op}} \times \mathscr{C} \rightarrow Ab \text{ is a biadditive functor}$. 

\item[] (ET2) The correspondence $\mathfrak{s}$ is an additive realisation of $\mathbb{E}$. 

\item[] (ET3) Let $\d \in \mathbb{E}(C,A)$ and $\d^{\prime} \in \mathbb{E}(C^{\prime},A^{\prime})$ be any pair of $\mathbb{E}$-extensions, realised as  $\mathfrak{s}(\d) = [A \overset{x}{\longrightarrow} B \overset{y}{\longrightarrow} C]$ and $\mathfrak{s}(\d^{\prime}) = [A^{\prime} \overset{x^{\prime}}{\longrightarrow} B^{\prime} \overset{y^{\prime}}{\longrightarrow} C^{\prime}]$ respectively. For any commutative diagram 
\begin{center}
\begin{tikzcd}
A \arrow[r, "x"] \arrow[d, "a"]
& B \arrow[d, "b"] \arrow[r,"y"]
& C \\
 A^{\prime} \arrow[r,"x^{\prime}"]
&B^{\prime} \arrow[r,"y^{\prime}"]
&C^{\prime} \end{tikzcd}
\end{center}
there exists a morphism $c \in \mathscr{C}(C,C^{\prime})$ such that $(a,c) \colon \d \rightarrow \d^{\prime}$ is a morphism of $\mathbb{E}$-extensions and the triple $(a,b,c)$ realises $(a,c)$.

\item[] $\text{(ET3})^{\text{op}}$ The dual of (ET3).

\item[] (ET4) Let $\d \in \mathbb{E}(D,A)$ and $\d^{\prime} \in \mathbb{E}(F,B)$ be any pair of $\mathbb{E}$-extensions, realised by the sequences, $A \overset{f}{\longrightarrow} B \overset{f^{\prime}}{\longrightarrow} D$ and $B\overset{g}{\longrightarrow} C \overset{g^{\prime}}{\longrightarrow} F$. Then there exists an object $E$ in $\mathscr{C}$, a commutative diagram
\begin{center}
\begin{tikzcd}
A \arrow[r, "f"] \arrow[d, equal]
& B \arrow[d, "g"] \arrow[r,"f^{\prime}"]
& D \arrow[d,"d"] \\
 A \arrow[r,"h"]
&C \arrow[r,"h^{\prime}"] \arrow[d,"g^{\prime}"]
&E \arrow[d,"e"] \\
 & F \arrow[r,equal] & F
\end{tikzcd}
\end{center}
in $\mathscr{C}$ and an $\mathbb{E}$-extension $\d^{\prime \prime} \in \mathbb{E}(E,A)$ realised by the sequence $A \overset{h}{\longrightarrow} C \overset{h^{\prime}}{\longrightarrow} E$, which satisfy the following compatibilities:
\begin{enumerate}[(i)]
\item $\mathfrak{s}((f^{\prime})_{*}\d^{\prime}) = [D \overset{d}{\longrightarrow} E \overset{e}{\longrightarrow} F].$

\item $d^{*} \d^{\prime \prime} = \d.$

\item $f_{*}\d^{\prime \prime} = e^{*}\d^{\prime}.$
\end{enumerate}

\item[] $\text{(ET4)}^{\text{op}}$ The dual of (ET4). 
\end{itemize}
In this case, we call $\mathfrak{s}$ an $\mathbb{E}$-triangulation of $\mathscr{C}$.
\end{definition}
There are many examples of extriangulated categories. They include, exact categories, and triangulated categories and extension-closed subcategories of triangulated subcategories. There are also extriangulated categories which are neither exact nor triangulated; for example see, \cite[Proposition 3.30]{NakaokaPalu}, \cite[Example 4.14 and Corollary 4.12]{ZhouZhu}.
\begin{definition}\cite[Definition 2.31]{BTShah}.
Let $(\mathscr{C},\mathbb{E},\mathfrak{s})$ and $(\mathscr{C}^{\prime},\mathbb{E}^{\prime},\mathfrak{s}^{\prime})$ be extriangulated categories. A covariant additive functor $F \colon \mathscr{C} \rightarrow \mathscr{C}^{\prime}$ is called an \textit{extriangulated functor} if there exists a natural transformation $$\Gamma = \{\Gamma_{(C,A)}\}_{(C,A) \in \mathscr{C}^{\text{op}} \times \mathscr{C}} \colon \mathbb{E} \Rightarrow \mathbb{E}^{\prime}(F^{\text{op}}-,F-)$$ 
of functors $\mathscr{C}^{\text{op}} \times \mathscr{C} \rightarrow Ab$, such that $\mathfrak{s}(\d) = [X \overset{x}{\longrightarrow} Y \overset{y}{\longrightarrow} Z]$ implies that $\mathfrak{s}^{\prime}(\Gamma_{(Z,X)})(\d) = [F(A) \overset{F(x)}{\longrightarrow} F(B) \overset{F(y)}{\longrightarrow} F(C)]$. Here $F^{\text{op}}$ is the \textit{opposite functor} $\mathscr{C}^{\text{op}} \rightarrow \mathscr{C^{\prime}}^{\text{op}}$ given by $F^{\text{op}}(A)=F(A)$ and $F^{\text{op}}(f^{\text{op}})=(F(f))^{\text{op}}$. Furthermore, we say that $F$ is an \textit{extriangulated equivalence} if $F$ is an equivalence of categories.  
\end{definition}
We will conclude this section by introducing some useful terminology from \cite{NakaokaPalu} and stating results about extriangulated categories which will be helpful for the rest of the paper. 

\begin{definition}\cite[Definition 2.5, Definition 3.9]{NakaokaPalu}.
Let $(\mathscr{C},\mathbb{E},\mathfrak{s})$ be a triple satisfying (ET1), (ET2), (ET3) and $\text{(ET3)}^{\text{op}}$. 
\begin{enumerate}
\item A sequence $A \overset{x}{\longrightarrow} B \overset{y}{\longrightarrow} C$ is called a \textit{conflation} if it realises some $\mathbb{E}$-extension $\d \in \mathbb{E}(C,A)$.
\item A morphism $f \in \mathscr{C}(A,B)$ is called an \textit{inflation} if it admits some conflation $A \overset{f}{\longrightarrow} B \longrightarrow C$. 
\item A morphism $g \in \mathscr{C}(B,C)$ is called a \textit{deflation} if it admits some conflation $A \longrightarrow B \overset{g}{\longrightarrow} C$. 
\end{enumerate}
\end{definition}
The terminology of conflations, inflations and deflations is also used in the context of exact categories and triangulated categories analogously. 

\begin{definition}\cite[Definition 2.19]{NakaokaPalu}. Let $(\mathscr{C},\mathbb{E},\mathfrak{s})$ be a triple satisfying (ET1), (ET2). 
\begin{enumerate}
\item If a conflation $A \overset{x}{\longrightarrow} B \overset{y}{\longrightarrow} C$ realises $\d \in \mathbb{E}(C,A)$, we call the pair $(A \overset{x}{\longrightarrow} B \overset{y}{\longrightarrow} C, \d)$ an $\mathbb{E}$-\textit{triangle} or \textit{extriangle} and denote it by the following diagram.
\begin{center}
\begin{tikzcd}
A \arrow[r, "x"]
& B \arrow[r,"y"]
& C \arrow[r, dashed, "\d"] & \text{}
\end{tikzcd} 
\end{center}

\item Let \begin{tikzcd} 
A \arrow[r, "x"]
& B \arrow[r,"y"]
& C \arrow[r, dashed, "\d"] &\text{}
\end{tikzcd}and
\begin{tikzcd} 
A^{\prime} \arrow[r, "x^{\prime}"]
& B^{\prime} \arrow[r,"y^{\prime}"]
& C^{\prime} \arrow[r, dashed, "\d^{\prime}"] &\text{}
\end{tikzcd}
be any pair of $\mathbb{E}$-triangles. If a triple $(a,b,c)$ realises $(a,c) \colon \d \rightarrow \d^{\prime}$ we write it as in the following commutative diagram and call $(a,b,c)$ a morphism of $\mathbb{E}$-triangles. 
\begin{center}
\begin{tikzcd}
A \arrow[r, "x"] \arrow[d,"a"]
& B \arrow[r,"y"] \arrow[d,"b"]
& C \arrow[r, dashed, "\d"] \arrow[d,"c"] & \text{} \\
A^{\prime} \arrow[r, "x^{\prime}"]
& B^{\prime} \arrow[r,"y^{\prime}"]
& C^{\prime} \arrow[r, dashed, "\d^{\prime}"] &\text{}
\end{tikzcd}
\end{center}

\end{enumerate}
\end{definition}

\begin{lemma}\label{2out3iso}\cite[Corollary 3.6]{NakaokaPalu}. 
Let $(\mathscr{C},\mathbb{E},\mathfrak{s})$ be a triple satisfying (ET1), (ET2), (ET3) and $\text{(ET3)}^{\text{op}}$. Let $(a,b,c)$ be a morphism of $\mathbb{E}$-triangles. If any two of $a,b,c,$ are isomorphisms, then so is the third. 
\end{lemma}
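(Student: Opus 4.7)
The plan is to apply a Yoneda-style argument using the six-term long exact sequences of $\mathrm{Hom}$ and $\mathbb{E}$-groups associated to an extriangle. Under the hypotheses (ET1), (ET2), (ET3) and $\text{(ET3)}^{\text{op}}$, Nakaoka and Palu (Proposition~3.3 in their paper) establish that for any extriangle $A \xrightarrow{x} B \xrightarrow{y} C \dashrightarrow \d$ and any object $X \in \mathscr{C}$, the sequence
$$\mathscr{C}(X,A) \xrightarrow{x_*} \mathscr{C}(X,B) \xrightarrow{y_*} \mathscr{C}(X,C) \xrightarrow{\d^\sharp} \mathbb{E}(X,A) \xrightarrow{x_*} \mathbb{E}(X,B) \xrightarrow{y_*} \mathbb{E}(X,C)$$
is exact, and likewise for the contravariant sequence obtained by applying $\mathscr{C}(-,X)$. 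The given morphism of extriangles $(a,b,c) \colon \d \to \d'$ induces a morphism between the corresponding pairs of six-term exact sequences with vertical maps induced by $a$, $b$ and $c$.

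If $a$ and $b$ are isomorphisms, I would apply the five lemma at the position $\mathscr{C}(X,C)$ of the covariant sequence: the vertical maps on the four flanking terms (induced by $a$ on $\mathrm{Hom}$ and $\mathbb{E}$, and by $b$ on $\mathrm{Hom}$ and $\mathbb{E}$) are all isomorphisms, so $c_*$ is an isomorphism for every $X$ and hence $c$ is an isomorphism by Yoneda. Dually, if $b$ and $c$ are isomorphisms, the five lemma at the position $\mathscr{C}(A,X)$ of the contravariant sequence yields that $a^*$ is an isomorphism for every $X$, hence $a$ is an isomorphism.

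The case where $a$ and $c$ are isomorphisms is the main obstacle, because the five lemma at $\mathscr{C}(X,B)$ would require a term to the left of $\mathscr{C}(X,A)$ that is not generally available. My plan is instead to show separately that $b$ is a split epimorphism and a split monomorphism. For the former, I would establish surjectivity of $b_* \colon \mathscr{C}(X,B) \to \mathscr{C}(X,B')$ for every $X$ by a diagram chase: for $g \colon X \to B'$, the identities $a_*\d = c^*\d'$ and $(y')^*\d' = 0$ force $\d^\sharp(c^{-1}y'g) = 0$, so by exactness $c^{-1}y'g$ lifts through $y$ to some $h \colon X \to B$; the residue $bh - g$ then lies in the image of $x'_*$, and inverting $a$ allows one to modify $h$ by an element of $x\cdot \mathscr{C}(X,A)$ to obtain $f$ with $bf = g$. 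Specialising $X = B'$, $g = 1_{B'}$ yields a section of $b$, and the dual chase using the contravariant sequence yields a retraction. A morphism that is simultaneously split mono and split epi is an isomorphism, so $b$ is an isomorphism as required.
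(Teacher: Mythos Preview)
Your argument is correct. The paper does not supply its own proof of this lemma; it simply cites \cite[Corollary 3.6]{NakaokaPalu}, so there is nothing in the paper to compare against directly. Your approach---using the five-term exact sequences of Proposition~3.3 in Nakaoka--Palu together with the five lemma for the cases ($a,b$ iso) and ($b,c$ iso), and a direct diagram chase for the case ($a,c$ iso)---is precisely the standard route and is essentially how Nakaoka and Palu obtain their Corollary~3.6.

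One minor point: you state a six-term exact sequence ending in $\mathbb{E}(X,C)$ and attribute it to Proposition~3.3 of \cite{NakaokaPalu}, but that proposition only yields the five-term sequence ending at $\mathbb{E}(X,B)$ (the sixth term requires (ET4), see \cite[Corollary 3.12]{NakaokaPalu}). This does not affect your argument, since every step you carry out---the five-lemma applications and the diagram chase---uses only the five available terms. It would be cleaner to state only the five-term sequence you actually need.
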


\begin{lemma}\label{closedUnderIso1}\cite[Proposition 3.7]{NakaokaPalu}.
Let $(\mathscr{C},\mathbb{E},\mathfrak{s})$ be a triple satisfying (ET1), (ET2), (ET3) and $\text{(ET3)}^{\text{op}}$. Let \begin{center}
\begin{tikzcd}
A \arrow[r, "a"]
& B \arrow[r,"b"]
& C \arrow[r, dashed, "\d"] & \text{}
\end{tikzcd} 
\end{center}
be any $\mathbb{E}$-triangle in $\mathscr{C}$. If $f \in \mathscr{C}(A,X)$ and $h \in \mathscr{C}(C,Z)$ are isomorphisms, then 
\begin{center}
\begin{tikzcd}
X \arrow[r, "a \circ f^{-1}"]
& B \arrow[r,"h \circ b"]
& Z \arrow[r, dashed, "f_{*}h^{*}\d"] & \text{}
\end{tikzcd} 
\end{center}
is again an $\mathbb{E}$-triangle. 

\end{lemma}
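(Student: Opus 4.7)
The plan is to show that the displayed sequence realises the extension $f_{*}h^{*}\d$, using only the realisation property built into (ET2) together with Lemma \ref{2out3iso}. I would first interpret $h^{*}$ with its natural convention as $(h^{-1})^{*}$, which is forced by the fact that $h$ is an isomorphism $C \to Z$ but the pullback operation in the paper is defined for arrows $C' \to C$; with this convention $f_{*}h^{*}\d$ lies in $\mathbb{E}(Z,X)$, as required.

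First I would fix any realisation $\mathfrak{s}(f_{*}h^{*}\d) = [X \xrightarrow{x'} B' \xrightarrow{y'} Z]$ guaranteed by (ET2). Next, I would verify that the pair $(f,h)$ is a morphism of $\mathbb{E}$-extensions $\d \to f_{*}h^{*}\d$, i.e.\ that $f_{*}\d = h^{*}(f_{*}h^{*}\d)$. Since $\mathbb{E}$ is bifunctorial and $h^{*}$ commutes with $f_{*}$, this reduces to the cancellation $h^{*}(h^{-1})^{*} = 1_{C}^{*} = \mathrm{id}$, which is immediate from contravariant functoriality of $\mathbb{E}(-,A)$. Applying the realisation axiom from (ET2) to the morphism $(f,h)$ of extensions then produces a morphism $b' \in \mathscr{C}(B,B')$ satisfying $b' \circ a = x' \circ f$ and $y' \circ b' = h \circ b$, so that $(f,b',h)$ is a morphism of $\mathbb{E}$-triangles from $\d$ to $f_{*}h^{*}\d$.

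By Lemma \ref{2out3iso}, since $f$ and $h$ are isomorphisms, $b'$ is an isomorphism as well. Rewriting the two commutativity relations as $b' \circ (a \circ f^{-1}) = x'$ and $y' \circ b' = h \circ b$ exhibits $b'$ as the isomorphism witnessing the equivalence (Definition \ref{equivReln}) of the sequences $[X \xrightarrow{a \circ f^{-1}} B \xrightarrow{h \circ b} Z]$ and $[X \xrightarrow{x'} B' \xrightarrow{y'} Z] = \mathfrak{s}(f_{*}h^{*}\d)$. Hence $X \xrightarrow{a \circ f^{-1}} B \xrightarrow{h \circ b} Z$ realises $f_{*}h^{*}\d$ and is therefore an $\mathbb{E}$-triangle.

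The only real subtlety is notational bookkeeping, namely pinning down the meaning of $h^{*}$ when $h$ is an arrow $C \to Z$ rather than $Z \to C$, and then confirming the cancellation identities. Beyond that, no deep machinery is needed: the argument is a short diagram chase feeding into the 2-out-of-3 lemma.
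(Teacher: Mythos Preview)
Your argument is correct and is the standard one. The paper does not actually supply a proof of this lemma; it merely cites \cite[Proposition 3.7]{NakaokaPalu}, so there is nothing in the paper to compare against. Your route---realise $f_{*}h^{*}\d$, check that $(f,h)$ is a morphism of $\mathbb{E}$-extensions, lift it to a morphism of triangles via (ET2), and invoke Lemma \ref{2out3iso}---is exactly how the result is obtained in the cited source.

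Your observation about the notation is well taken: with $h \in \mathscr{C}(C,Z)$ and $\d \in \mathbb{E}(C,A)$, the expression $h^{*}\d$ only makes sense as $(h^{-1})^{*}\d \in \mathbb{E}(Z,A)$, and this is indeed the intended reading (as confirmed by how the lemma is used downstream in Corollary \ref{closedUnderIso2}). With that convention fixed, your verification $h^{*}(f_{*}h^{*}\d) = f_{*}(h^{-1}h)^{*}\d = f_{*}\d$ is precisely the computation needed, and the rest is the equivalence-class argument you give.
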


\begin{corollary}\label{closedUnderIso2}
Let $(\mathscr{C},\mathbb{E},\mathfrak{s})$ be a triple satisfying (ET1), (ET2), (ET3) and $\text{(ET3)}^{\text{op}}$. Let \begin{center}
\begin{tikzcd}
A \arrow[r, "a"]
& B \arrow[r,"b"]
& C \arrow[r, dashed, "\d"] & \text{}
\end{tikzcd} 
\end{center}
be any $\mathbb{E}$-triangle in $\mathscr{C}$. Suppose we have the following commutative diagram,
\begin{center}
\begin{tikzcd}
A \arrow[r, "a"] \arrow[d,"f"]
& B \arrow[r,"b"] \arrow[d,"g"]
& C \arrow[d,"h"]
\\
X \arrow[r,"x"]
& Y \arrow[r,"y"]
& Z 
\end{tikzcd} 
\end{center}
where the morphisms $f,g,h$ are isomorphisms. Then it follows that
\begin{center}
\begin{tikzcd}
X \arrow[r, "x"]
& Y \arrow[r,"y"]
& Z \arrow[r, dashed, "f_{*}h^{*}\d"] & \text{}
\end{tikzcd} 
\end{center}
is an $\mathbb{E}$-triangle. 
\begin{proof}
By Proposition \ref{closedUnderIso1}, \begin{center}
\begin{tikzcd}
X \arrow[r, "a \circ f^{-1}"]
& B \arrow[r,"h \circ b"]
& Z \arrow[r, dashed, "f_{*}h^{*}\d"] & \text{}
\end{tikzcd} 
\end{center}
is an $\mathbb{E}$-triangle. Now consider the following diagram. 
\begin{center}
\begin{tikzcd}
X \arrow[r, "a \circ f^{-1}"] \arrow[d,equal]
& B \arrow[r,"h \circ b"] \arrow[d,"g"]
& Z \arrow[d,equal]
\\
X \arrow[r,"x"]
& Y \arrow[r,"y"]
& Z 
\end{tikzcd} 
\end{center}
Observe that it commutes, therefore it is an equivalence, which implies that 
\begin{center}
\begin{tikzcd}
X \arrow[r, "x"]
& Y \arrow[r,"y"]
& Z \arrow[r, dashed, "f_{*}h^{*}\d"] & \text{}
\end{tikzcd} 
\end{center}
is an $\mathbb{E}$-triangle. 
\end{proof}
\end{corollary}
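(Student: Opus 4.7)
The plan is to reduce to Lemma~\ref{closedUnderIso1} by using the commutative square to exhibit an equivalence between the given sequence $X \xrightarrow{x} Y \xrightarrow{y} Z$ and the sequence produced by that lemma.

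First I would invoke Lemma~\ref{closedUnderIso1} directly on the isomorphisms $f \colon A \to X$ and $h \colon C \to Z$. This yields that
\begin{center}
\begin{tikzcd}
X \arrow[r, "a \circ f^{-1}"] & B \arrow[r, "h \circ b"] & Z \arrow[r, dashed, "f_{*}h^{*}\d"] & \text{}
\end{tikzcd}
\end{center}
is an $\mathbb{E}$-triangle. So $f_{*}h^{*}\d$ is already known to be realised by some sequence; the only remaining issue is that this realising sequence passes through $B$ rather than through $Y$.

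Next I would use the commutativity hypotheses $x \circ f = g \circ a$ and $y \circ g = h \circ b$ to rewrite $x = g \circ (a \circ f^{-1})$ and $y \circ g = h \circ b$. Together these say exactly that the isomorphism $g \colon B \to Y$ makes the diagram
\begin{center}
\begin{tikzcd}
X \arrow[r, "a \circ f^{-1}"] \arrow[d, equal] & B \arrow[r, "h \circ b"] \arrow[d, "g"] & Z \arrow[d, equal] \\
X \arrow[r, "x"] & Y \arrow[r, "y"] & Z
\end{tikzcd}
\end{center}
commute. By Definition~\ref{equivReln}, the two sequences $X \xrightarrow{a \circ f^{-1}} B \xrightarrow{h \circ b} Z$ and $X \xrightarrow{x} Y \xrightarrow{y} Z$ thus represent the same equivalence class.

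Finally, since $\mathfrak{s}$ assigns equivalence classes of sequences to extensions, the equality of equivalence classes gives $\mathfrak{s}(f_{*}h^{*}\d) = [X \xrightarrow{x} Y \xrightarrow{y} Z]$, so the latter is an $\mathbb{E}$-triangle as claimed. There is no real obstacle here; the only thing to be careful about is correctly reading off the two commutativity relations from the hypothesis to produce the comparison isomorphism $g$. Once that is in place, the corollary follows immediately by combining Lemma~\ref{closedUnderIso1} with the definition of the equivalence relation on candidate realising sequences.
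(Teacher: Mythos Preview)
Your proof is correct and follows essentially the same route as the paper: apply Lemma~\ref{closedUnderIso1} to obtain the $\mathbb{E}$-triangle through $B$, then use the isomorphism $g$ and the commutativity hypotheses to exhibit an equivalence (in the sense of Definition~\ref{equivReln}) with the sequence $X \xrightarrow{x} Y \xrightarrow{y} Z$. The only cosmetic difference is that you spell out the commutativity relations $x = g \circ (a \circ f^{-1})$ and $y \circ g = h \circ b$ more explicitly than the paper does.
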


The following two propositions are special cases of propositions from \cite{herschend2017n} which are stated for general $n$-exangulated categories, but here we are restating them in the case of extriangulated categories which are in fact the same as $1$-exangulated categories by \cite[Proposition 4.3]{herschend2017n}. 

\begin{proposition}\cite[Proposition 3.2]{herschend2017n}. Let $(\mathscr{C},\mathbb{E},\mathfrak{s})$ be a triple satisfying (ET1), (ET2), (ET3) and $\text{(ET3)}^{\text{op}}$. Let $(A \overset{a}{\longrightarrow} B \overset{b}{\longrightarrow} C, \d)$ and $(X \overset{x}{\longrightarrow} Y \overset{y}{\longrightarrow} Z, \rho)$ be pairs consisting of a sequence of morphisms and an $\mathbb{E}$-extension. Then the following statements are equivalent.

\begin{enumerate}
\item $(A \oplus X \overset{a \oplus x}{\longrightarrow} B \oplus Y \overset{b \oplus y}{\longrightarrow} C \oplus Z, \d \oplus \rho)$ is an $\mathbb{E}$-triangle.
\item Both $(A \overset{a}{\longrightarrow} B \overset{b}{\longrightarrow} C, \d)$ and $(X \overset{x}{\longrightarrow} Y \overset{y}{\longrightarrow} Z, \rho)$ are $\mathbb{E}$-triangles. 
\end{enumerate}
\end{proposition}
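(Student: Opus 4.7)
The plan is to treat the two directions separately. The direction $(2) \Rightarrow (1)$ is immediate from (ET2): additivity of $\mathfrak{s}$ gives $\mathfrak{s}(\d \oplus \rho) = \mathfrak{s}(\d) \oplus \mathfrak{s}(\rho)$, which is by definition the equivalence class $[A \oplus X \xrightarrow{a \oplus x} B \oplus Y \xrightarrow{b \oplus y} C \oplus Z]$, so this sequence realises $\d \oplus \rho$.

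For the converse, the plan is to first invoke (ET2) to pick realisations $\mathfrak{s}(\d) = [A \xrightarrow{p} U \xrightarrow{q} C]$ and $\mathfrak{s}(\rho) = [X \xrightarrow{r} V \xrightarrow{s} Z]$, so that $(A \oplus X \xrightarrow{p \oplus r} U \oplus V \xrightarrow{q \oplus s} C \oplus Z, \d \oplus \rho)$ is an $\mathbb{E}$-triangle by the easy direction. The aim is then to exhibit a block-diagonal isomorphism between the middle objects $B \oplus Y$ and $U \oplus V$ compatible with the identities on the outer terms; its diagonal blocks $B \to U$ and $Y \to V$ will then witness the two sought equivalences of sequences.

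To build such an isomorphism, first observe that the canonical projections give morphisms of $\mathbb{E}$-extensions $(p_A, p_C) \colon \d \oplus \rho \to \d$ and $(p_X, p_Z) \colon \d \oplus \rho \to \rho$; this is a direct check from the biadditive decomposition of $\mathbb{E}(C \oplus Z, A \oplus X)$ used to define $\d \oplus \rho$. The realisation property of $\mathfrak{s}$ (part of (ET2)) then produces maps $\phi \colon B \oplus Y \to U$ and $\omega \colon B \oplus Y \to V$ realising these morphisms of extensions. Writing $\phi = (\phi_1, \phi_2)$ and $\omega = (\omega_1, \omega_2)$ as row matrices, the commutativity of the realising squares forces $\phi_2 x = 0$, $q \phi_2 = 0$, $\omega_1 a = 0$ and $s \omega_1 = 0$. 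It follows that the truncated maps $\phi' := (\phi_1, 0)$ and $\omega' := (0, \omega_2)$ still realise $(p_A, p_C)$ and $(p_X, p_Z)$ respectively.

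Stacking then produces $\Phi' := \begin{pmatrix} \phi_1 & 0 \\ 0 & \omega_2 \end{pmatrix} \colon B \oplus Y \to U \oplus V$, and a short matrix computation shows that $\Phi'$ fits as the middle vertical of a morphism of $\mathbb{E}$-triangles realising $(1_{A \oplus X}, 1_{C \oplus Z}) \colon \d \oplus \rho \to \d \oplus \rho$, with the hypothesised direct-sum extriangle on top and the reference extriangle $\mathfrak{s}(\d) \oplus \mathfrak{s}(\rho)$ on the bottom. Lemma \ref{2out3iso} then forces $\Phi'$ to be an isomorphism, and because $\Phi'$ is block-diagonal an elementary inspection of its inverse matrix shows that both $\phi_1 \colon B \to U$ and $\omega_2 \colon Y \to V$ are themselves isomorphisms. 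Finally, $\phi_1$ furnishes an equivalence (in the sense of Definition \ref{equivReln}) between $A \xrightarrow{a} B \xrightarrow{b} C$ and the chosen realisation of $\d$, so $\mathfrak{s}(\d) = [A \xrightarrow{a} B \xrightarrow{b} C]$; $\omega_2$ handles the $\rho$-summand symmetrically. The main obstacle is recognising that the raw maps $\phi$ and $\omega$ produced by the realisation condition carry extraneous off-diagonal contributions, and that truncating them is precisely what makes the stacked matrix block-diagonal, allowing Lemma \ref{2out3iso} to split the problem summand-wise.
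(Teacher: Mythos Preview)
The paper does not supply its own proof of this proposition; it is quoted directly from \cite[Proposition~3.2]{herschend2017n} without argument, so there is nothing in the paper to compare your attempt against.

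That said, your argument is sound. The direction $(2)\Rightarrow(1)$ is indeed immediate from the additivity clause of (ET2). For $(1)\Rightarrow(2)$, your key manoeuvre---using the realisation condition to produce $\phi$ and $\omega$, then truncating their off-diagonal components using the relations $\phi_2 x = 0$, $q\phi_2 = 0$ (and their analogues for $\omega$) forced by the commuting squares---is exactly what is needed to obtain a block-diagonal $\Phi'$. Lemma~\ref{2out3iso} then applies, and the observation that a block-diagonal morphism in an additive category is invertible only if each diagonal block is invertible (read off from the diagonal entries of $\Phi'\Psi = 1$ and $\Psi\Phi' = 1$ for any inverse $\Psi$) finishes the job. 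The only comment is that you never use (ET3) or (ET3)$^{\text{op}}$ directly---they enter only through Lemma~\ref{2out3iso}, whose proof in \cite{NakaokaPalu} does require them; this is consistent with the hypotheses of the proposition.
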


\begin{proposition}\label{dirSum}\cite[Corollary 3.3]{herschend2017n}.
Let $(\mathscr{C},\mathbb{E},\mathfrak{s})$ be a triple satisfying (ET1), (ET2), (ET3) and $\text{(ET3)}^{\text{op}}$. Suppose that 
\begin{center}
\begin{tikzcd}
X \oplus A \arrow[r, "\begin{pmatrix}
x \hspace{0.3cm} u \\
v \hspace{0.3cm} 1
\end{pmatrix}"]
& Y \oplus A \arrow[r,"\begin{pmatrix}
y \hspace{0.3cm} w
\end{pmatrix}"]
& Z \arrow[r, dashed, "\d"] & \text{}
\end{tikzcd} 
\end{center}
is an $\mathbb{E}$-triangle. Then for $t = x - u \circ v$ and $p = [1,0] \colon X \oplus A \rightarrow X$,

\begin{center}
\begin{tikzcd}
X \arrow[r, "t"]
& Y \arrow[r,"y"]
& Z \arrow[r, dashed, "p_{*}\d"] & \text{}
\end{tikzcd} 
\end{center}
is an $\mathbb{E}$-triangle.
\end{proposition}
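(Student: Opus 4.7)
My plan is to reduce the given $\mathbb{E}$-triangle, via an isomorphism of triangles, to one that is (up to canonical identifications) the direct sum of the candidate triangle $X\xrightarrow{t}Y\xrightarrow{y}Z$ with a split triangle on $A$, and then invoke the preceding proposition.

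First, I would use that in any $\mathbb{E}$-triangle the composition of consecutive morphisms vanishes, a standard consequence of \cite[Corollary~3.5]{NakaokaPalu}. Applied here, $\begin{pmatrix}y & w\end{pmatrix}\begin{pmatrix}x & u\\ v & 1\end{pmatrix}=0$ forces $w=-yu$. Then the invertible matrices $\alpha=\begin{pmatrix}1 & 0\\ v & 1\end{pmatrix}$ on $X\oplus A$ and $\beta=\begin{pmatrix}1 & -u\\ 0 & 1\end{pmatrix}$ on $Y\oplus A$ satisfy $\begin{pmatrix}t & 0\\ 0 & 1\end{pmatrix}\alpha = \beta\begin{pmatrix}x & u\\ v & 1\end{pmatrix}$ and $\begin{pmatrix}y & 0\end{pmatrix}\beta = \begin{pmatrix}y & w\end{pmatrix}$ by direct matrix calculation using $t=x-uv$ and $w=-yu$. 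Hence $(\alpha,\beta,1_{Z})$ is an isomorphism from the given sequence to $X\oplus A \xrightarrow{\left(\begin{smallmatrix}t & 0\\ 0 & 1\end{smallmatrix}\right)} Y\oplus A \xrightarrow{\left(\begin{smallmatrix}y & 0\end{smallmatrix}\right)} Z$, and Corollary~\ref{closedUnderIso2} identifies the latter as an $\mathbb{E}$-triangle with extension $\alpha_{*}\d$.

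Next I would decompose $\alpha_{*}\d$ under the biadditive isomorphism $\mathbb{E}(Z, X\oplus A) \cong \mathbb{E}(Z,X)\oplus \mathbb{E}(Z,A)$. Writing $p=[1,0]$ and $q=[0,1]$, the $X$-component of $\alpha_{*}\d$ is $(p\alpha)_{*}\d = p_{*}\d$ (since $p\alpha=p$), while its $A$-component is $(q\alpha)_{*}\d = [v,1]_{*}\d$. The critical observation is that $[v,1] = q\circ\begin{pmatrix}x & u\\ v & 1\end{pmatrix}$, combined with the standard fact that any inflation pushes its extension to zero: for an $\mathbb{E}$-triangle $(A\xrightarrow{x}B\xrightarrow{y}C,\d)$, one has $x_{*}\d=0$ (a consequence of the long exact sequence of \cite[Corollary~3.5]{NakaokaPalu}). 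Applied to the inflation $\begin{pmatrix}x & u\\ v & 1\end{pmatrix}$, this forces the $A$-component of $\alpha_{*}\d$ to vanish.

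Finally, the $\mathbb{E}$-triangle produced above is therefore (under the canonical identification $Z\oplus 0\cong Z$) the direct sum of the pair $(X\xrightarrow{t}Y\xrightarrow{y}Z,\; p_{*}\d)$ with the split extriangle $(A\xrightarrow{1_{A}}A\xrightarrow{0}0,\;0)$. The preceding proposition (\cite[Proposition~3.2]{herschend2017n}) then yields that both summands are $\mathbb{E}$-triangles, and in particular the first is the desired $\mathbb{E}$-triangle realising $p_{*}\d$. I expect the main subtlety to be the verification that the $A$-component of $\alpha_{*}\d$ vanishes, as this is the only step appealing to the axiomatic content of the $\mathbb{E}$-triangulation beyond matrix algebra and the closure-under-isomorphism results already in the section.
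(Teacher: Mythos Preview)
The paper does not supply a proof of this proposition; it is quoted verbatim from \cite[Corollary~3.3]{herschend2017n} and used as a black box, so there is nothing in the paper to compare your argument against.

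That said, your argument is correct and is essentially the standard proof of the cited result. A couple of minor remarks: the facts $\begin{pmatrix}y & w\end{pmatrix}\begin{pmatrix}x & u\\ v & 1\end{pmatrix}=0$ and $\begin{pmatrix}x & u\\ v & 1\end{pmatrix}_{*}\d=0$ are recorded in the present paper as Lemma~\ref{weakKernelCokernel} (citing \cite[Lemma~3.2]{NakaokaPalu}) rather than \cite[Corollary~3.5]{NakaokaPalu}; and the final step invoking the preceding proposition requires the harmless identification $Z\cong Z\oplus 0$, which you acknowledge. With those cosmetic adjustments your proof goes through without change.
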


\begin{proposition}\label{longExact}\cite[Corollary 3.12]{NakaokaPalu}.
Let $(\mathscr{C},\mathbb{E},\mathfrak{s})$ be an extriangulated category. For any $\mathbb{E}$-triangle \begin{tikzcd} 
A \arrow[r, "x"]
& B \arrow[r,"y"]
& C \arrow[r, dashed, "\d"] &\text{,}
\end{tikzcd}the following sequences of natural transformations are exact. 

\begin{center}
\begin{tikzcd}
\mathscr{C}(C,-) \arrow[r, Rightarrow, "\mathscr{C}(y{,}-)"]
& \mathscr{C}(B,-) \arrow[r, Rightarrow, "\mathscr{C}(x{,}-)"]
& \mathscr{C}(A,-) \arrow[r, Rightarrow, "\d^{\#}"]
& \mathbb{E}(C,-) \arrow[r, Rightarrow, "\mathbb{E}(y{,}-)"]
& \mathbb{E}(B,-) \arrow[r, Rightarrow, "\mathbb{E}(x{,}-)"]
& \mathbb{E}(A,-)
\end{tikzcd}
\end{center}

\begin{center}
\begin{tikzcd}
\mathscr{C}(-,A) \arrow[r, Rightarrow, "\mathscr{C}(-{,}x)"]
& \mathscr{C}(-,B) \arrow[r, Rightarrow, "\mathscr{C}(-{,}y)"]
& \mathscr{C}(-,C) \arrow[r, Rightarrow, "\d_{\#}"]
& \mathbb{E}(-,A) \arrow[r, Rightarrow, "\mathbb{E}(-{,}x)"]
& \mathbb{E}(-,B) \arrow[r, Rightarrow, "\mathbb{E}(-{,}y)"]
& \mathbb{E}(-,C)
\end{tikzcd}
\end{center}
The natural transformations $\d^{\#}$ and $\d_{\#}$ are defined as follows. Given any object $X$ in $\mathscr{C}$, we have that
\begin{enumerate}
\item $(\d^{\#})_{X} \colon \mathscr{C}(A,X) \rightarrow \mathbb{E}(C,X) \text{ ; } g \mapsto f_{*}\d,$

\item $(\d_{\#})_{X} \colon \mathscr{C}(X,C) \rightarrow \mathbb{E}(X,A) \text{ ; } f \mapsto f^{*}\d.$
\end{enumerate}

The exactness of the first sequence of natural transformations means that for any object $X$ in $\mathscr{C}$, the sequence
\begin{center}
\begin{tikzcd}
\mathscr{C}(C,X) \arrow[r, "\mathscr{C}(y{,}X)"]
& \mathscr{C}(B,X) \arrow[r, "\mathscr{C}(x{,}X)"]
& \mathscr{C}(A,X) \arrow[r, "\d^{\#}_{X}"]
& \mathbb{E}(C,X) \arrow[r, "\mathbb{E}(y{,}X)"]
& \mathbb{E}(B,X) \arrow[r,"\mathbb{E}(x{,}X)"]
& \mathbb{E}(A,X)
\end{tikzcd}
\end{center}
is exact in $Ab$ and likewise for the second sequence.
\end{proposition}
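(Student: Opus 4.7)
The plan is to prove both long exact sequences by first establishing three vanishing identities associated to any $\mathbb{E}$-triangle $(A \overset{x}{\longrightarrow} B \overset{y}{\longrightarrow} C, \d)$: namely $yx = 0$, $x_{*}\d = 0$ in $\mathbb{E}(C,B)$, and $y^{*}\d = 0$ in $\mathbb{E}(B,A)$. Each of these is obtained by comparing the given triangle against a suitable split $\mathbb{E}$-triangle (realising $0$) via (ET3), (ET3)$^{\text{op}}$, together with the additive realisation property of $\mathfrak{s}$ from (ET2). By symmetry it suffices to focus on the first of the two sequences; the second is handled dually.

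Once these identities are in place, the containment \emph{image $\subseteq$ kernel} at the five interior positions is a direct functoriality check: $\mathscr{C}(x,X) \circ \mathscr{C}(y,X) = \mathscr{C}(yx,X) = 0$; for $g \in \mathscr{C}(B,X)$, $\d^{\#}_X(gx) = g_{*}(x_{*}\d) = 0$; for $f \in \mathscr{C}(A,X)$, $\mathbb{E}(y,X)(f_{*}\d) = f_{*}(y^{*}\d) = 0$; and $\mathbb{E}(x,X) \circ \mathbb{E}(y,X) = \mathbb{E}(yx,X) = 0$.

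The real work is the reverse containment \emph{kernel $\subseteq$ image} at each interior position. At $\mathscr{C}(B,X)$: given $g \colon B \to X$ with $gx = 0$, I would apply (ET3) to the commutative square with top row $A \overset{x}{\to} B$, bottom row the split inflation $X \overset{[1,0]^{\mathrm{T}}}{\to} X \oplus C$, and verticals $0$ and $\binom{g}{y}$; the split realisation of $0 \in \mathbb{E}(C,X)$ supplies a factorisation $g = h \circ y$. At $\mathscr{C}(A,X)$: given $f \colon A \to X$ with $f_{*}\d = 0$, the equality $f_{*}\d = 1_{C}^{*}0$ turns $(f, 1_C)$ into a morphism of $\mathbb{E}$-extensions $\d \to 0$, and (ET3) furnishes a lift $b \colon B \to X \oplus C$ whose first component is the required $g$ satisfying $gx = f$. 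For the $\mathbb{E}$-level positions $\mathbb{E}(C,X)$ and $\mathbb{E}(B,X)$, the main tool is (ET4): given $\rho \in \mathbb{E}(C,X)$ with $y^{*}\rho = 0$, I would realise $\rho$ by a conflation $X \overset{a}{\to} Y \overset{b}{\to} C$ and feed this together with the conflation realising $\d$ into (ET4); the resulting object $E$ and the compatibility identities (i)--(iii) produce a morphism $f \colon A \to X$ with $f_{*}\d = \rho$, while the exactness at $\mathbb{E}(B,X)$ follows dually.

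I expect the main obstacle to be these last two steps, where the careful bookkeeping with (ET4)'s compatibility relations is required to actually produce a preimage of an $\mathbb{E}$-extension rather than merely a morphism of objects; the remaining exactness statements are essentially consequences of (ET3) and the additivity of $\mathfrak{s}$.
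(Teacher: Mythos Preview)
The paper does not contain a proof of this proposition; it is cited from \cite[Corollary~3.12]{NakaokaPalu} as background material, so there is nothing in the paper to compare your argument against directly.

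On the proposal itself: the overall architecture is correct and matches Nakaoka--Palu, but two of your kernel-in-image steps have gaps. At $\mathscr{C}(B,X)$, applying (ET3) with bottom row $X \to X \oplus C \to C$ and middle vertical $\binom{g}{y}$ only yields a morphism $c \colon C \to C$ satisfying $y = cy$ from the right-hand square; it does not give the factorisation $g = hy$ you claim. The easy fix is to take instead the split $\mathbb{E}$-triangle $0 \to X \xrightarrow{1_X} X$ as the bottom row with verticals $0 \colon A \to 0$ and $g \colon B \to X$; then (ET3) directly produces $h \colon C \to X$ with $g = hy$.

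The more serious gap is at $\mathbb{E}(C,X)$. The two conflations at hand, $A \to B \to C$ realising $\d$ and $X \to Y \to C$ realising $\rho$, both terminate at $C$; this is \emph{not} an input configuration for (ET4) or (ET4)$^{\mathrm{op}}$, each of which requires the middle term of one conflation to coincide with an outer term of the other. In Nakaoka--Palu the hypothesis $y^{*}\rho = 0$ is first exploited to produce an auxiliary diagram (via the split realisation of $y^{*}\rho$ and the realisation axiom), and only after that does a consequence of (ET4)$^{\mathrm{op}}$ apply to extract $f \colon A \to X$ with $f_{*}\d = \rho$. Your sentence ``feed this together with the conflation realising $\d$ into (ET4)'' elides precisely this intermediate construction, and without it the argument does not go through as stated.
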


\begin{proposition}\label{et3Equiv}\cite[Proposition 3.3]{NakaokaPalu}.
Let $(\mathscr{C},\mathbb{E},\mathfrak{s})$ be a triple satisfying (ET1) and (ET2). Then the following are equivalent. 
\begin{enumerate}
\item $(\mathscr{C},\mathbb{E},\mathfrak{s})$ satisfies (ET3) and (ET3)$^{\text{op}}$.
\item For any $\mathbb{E}$-triangle \begin{tikzcd}
A \arrow[r,"x"]
& B \arrow[r,"y"]
& C \arrow[r,dashed,"\d"]
&\text{}
\end{tikzcd}, the following sequences of natural transformations are exact. 
\begin{center}
\begin{tikzcd}
\mathscr{C}(C,-) \arrow[r, Rightarrow, "\mathscr{C}(y{,}-)"]
& \mathscr{C}(B,-) \arrow[r, Rightarrow, "\mathscr{C}(x{,}-)"]
& \mathscr{C}(A,-) \arrow[r, Rightarrow, "\d^{\#}"]
& \mathbb{E}(C,-) \arrow[r, Rightarrow, "\mathbb{E}(y{,}-)"]
& \mathbb{E}(B,-) 
\end{tikzcd}
\end{center}

\begin{center}
\begin{tikzcd}
\mathscr{C}(-,A) \arrow[r, Rightarrow, "\mathscr{C}(-{,}x)"]
& \mathscr{C}(-,B) \arrow[r, Rightarrow, "\mathscr{C}(-{,}y"]
& \mathscr{C}(-,C) \arrow[r, Rightarrow, "\d_{\#}"]
& \mathbb{E}(-,A) \arrow[r, Rightarrow, "\mathbb{E}(-{,}x)"]
& \mathbb{E}(-,B) 
\end{tikzcd}
\end{center}
\end{enumerate}
\end{proposition}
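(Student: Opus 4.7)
The plan is to prove the two implications separately.

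For $(1) \Rightarrow (2)$, assuming only (ET1), (ET2), (ET3) and (ET3)$^{\text{op}}$, I would verify each of the five exactness points directly. The strategy at each point is the same: first establish the basic vanishing identities such as $y \circ x = 0$ and $x_{*}\d = 0$ (each a consequence of (ET3), (ET3)$^{\text{op}}$ applied to split extensions), then perform a short diagram chase using the realisation condition $(\circ)$. This is the restriction of the standard argument of \cite[Corollary 3.12]{NakaokaPalu} to the first five terms of each sequence, which does not require (ET4).

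For the converse $(2) \Rightarrow (1)$, by duality it suffices to prove (ET3). Given the commutative square of (ET3) with $b \circ x = x^{\prime} \circ a$ and realisations $\mathfrak{s}(\d) = [A \xrightarrow{x} B \xrightarrow{y} C]$ and $\mathfrak{s}(\d^{\prime}) = [A^{\prime} \xrightarrow{x^{\prime}} B^{\prime} \xrightarrow{y^{\prime}} C^{\prime}]$, I need to produce $c \colon C \to C^{\prime}$ with $c \circ y = y^{\prime} \circ b$ and $a_{*}\d = c^{*}\d^{\prime}$. My first step is to extract the vanishings $y \circ x = 0$, $y^{\prime} \circ x^{\prime} = 0$, $x_{*}\d = 0$, and $(y^{\prime})^{*}\d^{\prime} = 0$, each by evaluating a consecutive pair of maps in the given exact sequences at an identity morphism. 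For the second step, since $y^{\prime} \circ b \circ x = y^{\prime} \circ x^{\prime} \circ a = 0$, the morphism $y^{\prime} \circ b$ lies in $\operatorname{Ker}(\mathscr{C}(x, C^{\prime}))$; by exactness at $\mathscr{C}(B, C^{\prime})$ of the first sequence attached to $\d$, this lifts through $y$ to yield $c \colon C \to C^{\prime}$ with $c \circ y = y^{\prime} \circ b$.

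The main obstacle is the third step: arranging also $a_{*}\d = c^{*}\d^{\prime}$. Setting $\eta \defeq c^{*}\d^{\prime} - a_{*}\d \in \mathbb{E}(C, A^{\prime})$, direct computation using the Step~1 vanishings yields $\mathbb{E}(y, A^{\prime})(\eta) = 0$ and $\mathbb{E}(C, x^{\prime})(\eta) = 0$. Exactness then expresses $\eta = k_{*}\d = j^{*}\d^{\prime}$ for some $k \colon A \to A^{\prime}$ and $j \colon C \to C^{\prime}$, so that $(k, j) \colon \d \to \d^{\prime}$ is itself a morphism of $\mathbb{E}$-extensions, realised by some $m \colon B \to B^{\prime}$ via the condition $(\circ)$ granted by (ET2). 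My plan is then to replace $c$ by $c - j$ (which enforces $a_{*}\d = (c - j)^{*}\d^{\prime}$ but perturbs the commutation by $y^{\prime} \circ m$) and to absorb this perturbation using the indeterminacy of $j$ modulo $\operatorname{Im}((y^{\prime})_{*})$, with the corresponding adjustments of $m$. Coordinating these corrections so that both conditions hold simultaneously is the technical heart of the argument, and I expect a careful tracking of the exactness relations in both variables of $\mathbb{E}$ will be needed to complete the diagram chase.
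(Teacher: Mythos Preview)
The paper does not supply its own proof of this proposition; it is quoted verbatim from \cite[Proposition 3.3]{NakaokaPalu} and used as a black box, so there is no argument in the paper to compare against.

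Evaluating your proposal on its own merits: the plan for $(1)\Rightarrow(2)$ is fine and is exactly the content of \cite[Proposition 3.3, Corollary 3.12]{NakaokaPalu} restricted to five terms. For $(2)\Rightarrow(1)$ your ingredients are correct, but the order in which you assemble them makes the last step genuinely harder, and you leave it unfinished. After producing $c_0$ with $c_0 y = y'b$ and setting $\eta = c_0^{*}\d' - a_{*}\d$, your plan reduces to finding $j$ with $j^{*}\d' = \eta$ and $jy = 0$; via the realisation $m$ of $(k,j_0)$ this becomes the claim that $m \in \operatorname{im}(x'_{*}) + \operatorname{im}(y^{*})$ inside $\mathscr{C}(B,B')$, which is not a direct consequence of the four-term exact sequences you have available.

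The standard (and shorter) argument reverses the order. First note $x'_{*}(a_{*}\d) = (bx)_{*}\d = b_{*}x_{*}\d = 0$, so exactness of the second sequence for $\d'$ at $\mathbb{E}(C,A')$ gives $c$ with $a_{*}\d = c^{*}\d'$. Then (ET2) supplies $b' \colon B \to B'$ with $b'x = x'a$ and $y'b' = cy$. Now $(b-b')x = 0$, and exactness of the first sequence for $\d$ at $\mathscr{C}(B,B')$ yields $g \colon C \to B'$ with $b-b' = gy$. Setting $c'' \defeq c + y'g$ one checks $(c'')^{*}\d' = c^{*}\d' + g^{*}(y')^{*}\d' = a_{*}\d$ and $c''y = cy + y'gy = y'b' + y'(b-b') = y'b$, so $(a,b,c'')$ realises $(a,c'')$. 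This avoids the obstruction you ran into entirely.
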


\begin{lemma}\label{weakKernelCokernel}\cite[Lemma 3.2]{NakaokaPalu}. Let $(\mathscr{C},\mathbb{E},\mathfrak{s})$ be a triple satisying (ET1),(ET2), (ET3), (ET3)$^{\text{op}}$. Then for any $\mathbb{E}$-triangle, 
\begin{tikzcd}
A \arrow[r,"x"]
& B \arrow[r,"y"]
& C \arrow[r,dashed, "\d"]
& \text{,}
\end{tikzcd}
the following statements hold:
\begin{enumerate}
\item $y \circ x = 0,$
\item $x_{*}\d = 0,$
\item $y^{*}\d = 0.$
\end{enumerate}
\end{lemma}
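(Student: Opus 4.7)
I would prove the three statements in the order (1), (2), (3), with (1) serving as input for (2) and (3).

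For (1), I would compare the given $\mathbb{E}$-triangle with the split triangle $A \xrightarrow{\iota_A} A \oplus C \xrightarrow{\pi_C} C$ realising $0 \in \mathbb{E}(C, A)$. The key observation is that $(0_A, 1_C)$ is automatically a morphism of $\mathbb{E}$-extensions $\d \to 0$, since its defining equation $0_*\d = 1_C^* \cdot 0$ is just $0 = 0$. The realisation property of $\mathfrak{s}$ built into (ET2) then yields a $b \colon B \to A \oplus C$ completing this to a morphism of extriangles, from which $\pi_C \circ b = y$ and $b \circ x = \iota_A \circ 0 = 0$, so that $y \circ x = \pi_C \circ b \circ x = 0$.

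For (2), I would apply (ET3) with bottom row the split extriangle $B \xrightarrow{\iota_B} B \oplus C \xrightarrow{\pi_C} C$ realising $0 \in \mathbb{E}(C, B)$. Taking $a \defeq x$ and middle morphism $\binom{1_B}{y} \colon B \to B \oplus C$, the left square commutes exactly because $y \circ x = 0$ by (1): indeed $\binom{1_B}{y} \circ x = \binom{x}{0} = \iota_B \circ x$. By (ET3) there is some $c \colon C \to C$ such that $(x, c) \colon \d \to 0$ is a morphism of extensions, and hence $x_* \d = c^* \cdot 0 = 0$. Part (3) is dual: I would apply (ET3)$^{\text{op}}$ to the split triangle $A \xrightarrow{\iota_A} A \oplus B \xrightarrow{\pi_B} B$ with middle morphism $(x, 1_B) \colon A \oplus B \to B$ and right morphism $y$; the right square commutes by (1) via $y \circ (x, 1_B) = (yx, y) = (0, y) = y \circ \pi_B$, so (ET3)$^{\text{op}}$ yields some $a \colon A \to A$ with $(a, y) \colon 0 \to \d$ a morphism of extensions, hence $y^* \d = a_* \cdot 0 = 0$.

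Rather than a hard technical obstacle, the main conceptual insight is recognising in (1) that $(0_A, 1_C)$ is always a valid morphism of extensions $\d \to 0$, regardless of whether $\d$ itself is zero — it is this that unlocks the realisation axiom for an arbitrary $\d$. Once (1) is in hand, parts (2) and (3) are essentially formal: the split-triangle diagrams are forced to commute by (1), and (ET3), (ET3)$^{\text{op}}$ extract the desired vanishing directly from the defining identity $a_* \d = c^* \d'$ of a morphism of extensions (with $\d'$ zero on one side).
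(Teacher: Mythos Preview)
Your proof is correct in all three parts. The paper itself does not supply a proof of this lemma; it is stated as a citation of \cite[Lemma 3.2]{NakaokaPalu}, so there is no in-paper argument to compare against. Your approach --- using the realisation axiom for the morphism $(0_A,1_C)\colon \d \to 0$ to obtain (1), then feeding (1) into (ET3) and (ET3)$^{\text{op}}$ against split triangles to deduce (2) and (3) --- is essentially the argument given in Nakaoka--Palu's original paper, so nothing is missing or nonstandard.
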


\begin{proposition}\label{mappingCone}\cite[Proposition 1.20]{liu2019hearts}. Let $(\mathscr{C},\mathbb{E},\mathfrak{s})$ be an extriangulated category. Let 
\begin{center}
\begin{tikzcd} 
A \arrow[r, "x"]
& B \arrow[r,"y"]
& C \arrow[r, dashed, "\d"] &\text{}
\end{tikzcd}
\end{center}
be an $\mathbb{E}$-triangle, let $f \colon A \rightarrow D$ be any morphism and let 
\begin{center}
\begin{tikzcd} 
D \arrow[r, "d"]
& E \arrow[r,"e"]
& F \arrow[r, dashed, "f_{*}\d"] &\text{}
\end{tikzcd}
\end{center} 
be an $\mathbb{E}$-triangle realising $f_{*}\d$. Then there is a morphism $g$ such that the following diagram commutes 
\begin{center}
\begin{tikzcd} 
A \arrow[r, "x"] \arrow[d,"f"]
& B \arrow[r,"y"] \arrow[d,"g"]
& C \arrow[r, dashed, "\d"] \arrow[d,equal] &\text{,}
\\
D \arrow[r,"d"] 
& E \arrow[r,"e"] 
& C \arrow[r,dashed, "f_{*}\d"]
& \text{}
\end{tikzcd}
\end{center}
and that \begin{tikzcd} 
A \arrow[r, "\begin{pmatrix}
-f \\ x
\end{pmatrix}"]
& D \oplus B \arrow[r,"\begin{pmatrix}
d \hspace{0.3cm} g
\end{pmatrix}"]
& E \arrow[r, dashed, "e^{*}\d"] &\text{}
\end{tikzcd}
is an $\mathbb{E}$-triangle.

Dually, let 
\begin{center}
\begin{tikzcd} 
A \arrow[r, "x"]
& B \arrow[r,"y"]
& C \arrow[r, dashed, "\d"] &\text{}
\end{tikzcd}
\end{center}
be an $\mathbb{E}$-triangle, let $h \colon E \rightarrow C$ be any morphism and let 
\begin{center}
\begin{tikzcd} 
A \arrow[r, "d"]
& D \arrow[r,"e"]
& E \arrow[r, dashed, "h^{*}\d"] &\text{}
\end{tikzcd}
\end{center}
be an $\mathbb{E}$-triangle realising $h^{*}\d$. Then there is a morphism $g \colon D \rightarrow B$ such that the following diagram commutes
\begin{center}
\begin{tikzcd} 
A \arrow[r,"d"] \arrow[d,equal]
& D \arrow[r,"e"] \arrow[d,"g"]
& E \arrow[r,dashed, "h^{*}\d"] \arrow[d,"h"]
& \text{} 
\\
A \arrow[r, "x"] 
& B \arrow[r,"y"] 
& C \arrow[r, dashed, "\d"] 
&\text{}
\end{tikzcd}
\end{center}
and that \begin{tikzcd} 
D \arrow[r, "\begin{pmatrix}
-e \\ g
\end{pmatrix}"]
& E \oplus B \arrow[r,"\begin{pmatrix}
h \hspace{0.3cm} y
\end{pmatrix}"]
&C \arrow[r, dashed, "d_{*}\d"] &\text{}
\end{tikzcd} is an $\mathbb{E}$-triangle. 

\end{proposition}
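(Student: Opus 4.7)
The plan is in two steps. First I produce the morphism $g$ using the realisation axiom, then I construct the mapping cone extriangle by realising $e^{*}\d$ and identifying its middle term with $D\oplus B$; the dual statement will then follow by passing to the opposite extriangulated category.

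For the first step, Lemma \ref{*operation}(1) says that the pair $(f,1_{C})$ is a morphism of $\mathbb{E}$-extensions $\d \to f_{*}\d$. Since $\mathfrak{s}(\d)=[A\xrightarrow{x} B \xrightarrow{y} C]$ and $\mathfrak{s}(f_{*}\d)=[D\xrightarrow{d} E \xrightarrow{e} C]$ are both given by hypothesis, the realisation condition $(\circ)$ immediately furnishes $g\colon B\to E$ with $gx=df$ and $eg=y$, so that the triple $(f,g,1_{C})$ realises $(f,1_{C})$. This yields the first commutative diagram in the statement.

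For the second step, choose any realisation $A \xrightarrow{a} T \xrightarrow{b} E$ of $e^{*}\d \in \mathbb{E}(E,A)$. By Lemma \ref{*operation}(2), the pair $(1_{A},e)\colon e^{*}\d \to \d$ is a morphism of $\mathbb{E}$-extensions; applying $(\circ)$ produces $m\colon T\to B$ with $ma=x$ and $ym=eb$. Moreover, bifunctoriality gives $f_{*}(e^{*}\d)=e^{*}(f_{*}\d)$, and the right-hand side vanishes by Lemma \ref{weakKernelCokernel}(3) applied to the extriangle realising $f_{*}\d$; hence $(f,1_{E})\colon e^{*}\d\to 0$ is also a morphism of $\mathbb{E}$-extensions, and realising $0\in\mathbb{E}(E,D)$ by the canonical split conflation $D\to D\oplus E\to E$ yields, via $(\circ)$, a morphism $n\colon T\to D$ with $na=f$. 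Assembling these into $\binom{-n}{m}\colon T \to D\oplus B$ gives $\binom{-n}{m}\circ a = \binom{-f}{x}$, producing a morphism between the realisation of $e^{*}\d$ and the putative mapping cone sequence. Once the middle square commutes (i.e. $-dn+gm=b$), Lemma \ref{2out3iso} forces $\binom{-n}{m}$ to be an isomorphism, and Corollary \ref{closedUnderIso2} promotes $\bigl(A \xrightarrow{\binom{-f}{x}} D\oplus B \xrightarrow{(d,g)} E\bigr)$ to an extriangle realising $e^{*}\d$.

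The main obstacle is precisely this middle-square commutativity. The relations in hand only give $e(-dn+gm)=ym=eb$, so the discrepancy $-dn+gm-b$ lies in the image of $d_{*}\colon \mathscr{C}(T,D)\to \mathscr{C}(T,E)$ by the exact sequence of Proposition \ref{longExact} applied to the triangle $D\xrightarrow{d} E \xrightarrow{e} C$. Absorbing this discrepancy requires exploiting the non-uniqueness in the realisation axiom (adjusting $n$ or $m$ within the flexibility permitted by the exact sequences of Proposition \ref{longExact}); alternatively, one may bypass the ad hoc assembly by invoking (ET4) on the pair $\d$ and a suitably chosen split conflation involving $D$, so that the middle row of the resulting $3\times 3$ diagram is the desired mapping cone and the compatibility conditions (i)-(iii) of (ET4) pin down the extension class as $e^{*}\d$. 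Finally, the dual statement follows by applying the primal assertion to the opposite extriangulated category $(\mathscr{C}^{\text{op}},\mathbb{E}^{\text{op}},\mathfrak{s}^{\text{op}})$.
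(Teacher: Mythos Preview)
The paper does not supply a proof of this proposition; it is quoted from \cite{liu2019hearts}. So there is nothing to compare against, and your attempt must stand on its own.

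Your first step (producing $g$ via the realisation axiom) is fine. The gap is in the second step, and you have essentially flagged it yourself: you never establish the middle-square identity $-dn+gm=b$. Saying that the discrepancy $\psi:=gm-b-dn$ lies in $\operatorname{im}(d_{*})$ is correct, so $\psi=d\phi$ for some $\phi\colon T\to D$; but replacing $n$ by $n+\phi$ only preserves the left square if $\phi a=0$, whereas all you can deduce is $d(\phi a)=\psi a = gx - ba - df = df - 0 - df = 0$. In an extriangulated category inflations are not monomorphisms in general, so $d(\phi a)=0$ does not force $\phi a=0$. Thus the ``adjust $n$ or $m$'' fix does not go through as written, and the appeal to Lemma~\ref{2out3iso} and Corollary~\ref{closedUnderIso2} is premature because you do not yet have a morphism of $\mathbb{E}$-triangles.

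Your alternative suggestion---to bypass the assembly via (ET4) and a split conflation---is the route taken in the literature, but you have only gestured at it. If you want a self-contained argument, carry this out explicitly: form the direct-sum triangle $A\oplus D \xrightarrow{\,x\oplus 1\,} B\oplus D \xrightarrow{\,(y,0)\,} C$ realising $\binom{1}{0}_{*}\d$, pre-compose with the automorphism $\bigl(\begin{smallmatrix}1&0\\ f&1\end{smallmatrix}\bigr)$ of $A\oplus D$ using Lemma~\ref{closedUnderIso1}, and then invoke Proposition~\ref{dirSum} to peel off the $D$-summand; this produces an honest $\mathbb{E}$-triangle with first map $\binom{-f}{x}$ and identifies the extension class, after which an application of (ET3)/(ET3)$^{\text{op}}$ to compare with the realisation of $f_{*}\d$ yields the desired $g$ and the form $(d\ \ g)$ for the second map. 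As it stands, your write-up is a sketch that correctly locates the difficulty but does not resolve it.
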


The following proposition is a special case of \cite[Proposition 3.5]{herschend2017n} which applies to general $n$-exangulated categories. But here we are restating the statement just for extriangulated categories, which are the same as $1$-exangulated categories. The statement of \cite[Proposition 3.5]{herschend2017n} is a consequence of (ET2) and Proposition \ref{mappingCone}, or axioms (R0) and (EA2) respectively in the language of \cite{herschend2017n}. 

\begin{corollary}\label{mappingCone2}\cite[Proposition 3.5(2)]{herschend2017n}. Let $(\mathscr{C},\mathbb{E},\mathfrak{s})$ be an extriangulated category. Let \begin{tikzcd} 
A \arrow[r, "a"]
& B \arrow[r,"b"]
& C \arrow[r, dashed, "\varepsilon"] &\text{}
\end{tikzcd}
and 
\begin{tikzcd} 
X \arrow[r, "x"]
& Y \arrow[r,"y"]
& Z \arrow[r, dashed, "\d"] &\text{}
\end{tikzcd} be $\mathbb{E}$-triangles. Suppose we have the following solid commutative diagram.
\begin{center}
\begin{tikzcd} 
A \arrow[r, "a"] \arrow[d,equal]
& B \arrow[r,"b"] \arrow[d,"u"]
& C \arrow[r, dashed, "\varepsilon"] &\text{}
\\
A \arrow[r, "x"]
& Y \arrow[r,"y"]
& Z \arrow[r, dashed, "\d"] &\text{}
\end{tikzcd}
\end{center}
Then there exists a morphism $w \colon C \rightarrow Z$ such $wb = yu, w^{*}\d = \varepsilon$ and that the following is an $\mathbb{E}$-triangle,

\begin{center}
\begin{tikzcd} 
B \arrow[r, "\begin{pmatrix}
-b \\ u
\end{pmatrix}"]
& C \oplus Y \arrow[r,"\begin{pmatrix}
w \hspace{0.3cm} y
\end{pmatrix}"]
& Z \arrow[r, dashed, "a_{*}\d"] &\text{.}
\end{tikzcd}
\end{center}

\end{corollary}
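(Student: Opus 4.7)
The plan is to construct $w$ in two stages: first produce a candidate $w_0$ via (ET3), then correct it by a term absorbing the discrepancy between $u$ and an auxiliary morphism supplied by the dual half of Proposition \ref{mappingCone}.

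First, applying (ET3) to the commutative square $(1_A, u)$ between the extriangles $(A, a, B, b, C, \varepsilon)$ and $(A, x, Y, y, Z, \d)$ yields a morphism $w_0 \colon C \to Z$ realising a morphism of $\mathbb{E}$-extensions $(1_A, w_0) \colon \varepsilon \to \d$; in particular, $w_0 b = y u$ and $w_0^{*}\d = \varepsilon$. Applying the second (dual) half of Proposition \ref{mappingCone} to the extriangle $(A \overset{x}{\to} Y \overset{y}{\to} Z, \d)$ and the morphism $w_0$ -- noting that $w_0^{*}\d = \varepsilon$ is realised by $A \overset{a}{\to} B \overset{b}{\to} C$ -- then produces a morphism $g \colon B \to Y$ with $g a = x$ and $y g = w_0 b$, together with an $\mathbb{E}$-triangle
\[
B \xrightarrow{\binom{-b}{g}} C \oplus Y \xrightarrow{(w_0,\, y)} Z, \qquad a_{*}\d .
\]
This coincides with the desired $\mathbb{E}$-triangle except that the inflation involves $g$ rather than $u$.

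Since $g$ and $u$ both satisfy $g a = x = u a$, exactness of the first sequence in Proposition \ref{longExact} applied to $(A, a, B, b, C, \varepsilon)$ provides some $\eta \colon C \to Y$ with $u - g = \eta b$. Define $w \defeq w_0 + y \eta$. A direct computation using $y g = w_0 b = y u$ yields $w b = w_0 b + y(u-g) = y u$, while $w^{*}\d = w_0^{*}\d + \eta^{*} y^{*}\d = \varepsilon$ follows from $y^{*}\d = 0$ (Lemma \ref{weakKernelCokernel}).

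For the final step, introduce the automorphism $\Phi \defeq \bigl(\begin{smallmatrix} 1_{C} & 0 \\ -\eta & 1_{Y}\end{smallmatrix}\bigr)$ of $C \oplus Y$, with inverse $\bigl(\begin{smallmatrix} 1 & 0 \\ \eta & 1 \end{smallmatrix}\bigr)$. One verifies $\Phi \binom{-b}{g} = \binom{-b}{\eta b + g} = \binom{-b}{u}$ and $(w,\, y)\Phi = (w - y\eta,\, y) = (w_0,\, y)$, so the square with identity vertical maps on the outside and $\Phi$ in the middle commutes. This exhibits the target sequence $B \xrightarrow{\binom{-b}{u}} C \oplus Y \xrightarrow{(w,\, y)} Z$ as equivalent (in the sense of Definition \ref{equivReln}) to the $\mathbb{E}$-triangle above, and Corollary \ref{closedUnderIso2} transfers the $\mathbb{E}$-triangle structure; since the outer isomorphisms are identities, the realised extension is $(1_{B})_{*}(1_{Z})^{*}(a_{*}\d) = a_{*}\d$. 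The key subtlety is that Proposition \ref{mappingCone} produces the auxiliary morphism $g$ in place of the prescribed $u$; the correction $w_0 \mapsto w_0 + y\eta$ absorbs this discrepancy, and the shifted $w$ still satisfies $w^{*}\d = \varepsilon$ precisely because $y^{*}\d = 0$.
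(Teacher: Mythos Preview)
Your proof is correct. The paper itself supplies no detailed argument for this corollary: it simply cites \cite[Proposition~3.5]{herschend2017n} and remarks that the proof there rests on (ET2) and Proposition~\ref{mappingCone} (axioms (R0) and (EA2) in the $n$-exangulated language). Your argument follows that same template --- the dual half of Proposition~\ref{mappingCone} is the engine producing the mapping-cone $\mathbb{E}$-triangle --- but you add an explicit correction step: you first invoke (ET3) to obtain a preliminary $w_0$ with $w_0^{*}\delta=\varepsilon$, then use the exact Hom-sequence (Proposition~\ref{longExact}) to find $\eta$ absorbing the discrepancy between the auxiliary $g$ and the given $u$, and finally conjugate by the elementary automorphism $\Phi$ of $C\oplus Y$. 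All verifications check out; in particular $y(u-g)=0$ because $yg=w_0b=yu$, and $w^{*}\delta=\varepsilon$ because $y^{*}\delta=0$. The only difference from the paper's indicated route is that you call on (ET3) and the long exact sequence explicitly, whereas the cited reference packages the construction so that (ET2) alone (together with (EA2)) suffices; your version is a transparent unpacking of the same idea.
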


\section{Idempotent completion of extriangulated categories.}
For the rest of this section, let $(\mathscr{C}, \mathbb{E}, \mathfrak{s})$ be an extriangulated category and let $\tilde{\mathscr{C}}$ and $\hat{\mathscr{C}}$ be the idempotent completion of $\mathscr{C}$ and the weak idempotent completion of $\mathscr{C}$ respectively. Note that in order to consider $\hat{\mathscr{C}}$, we have to further assume that $\mathscr{C}$ is a small category. 
\subsection{Idempotent completion.}
\begin{theorem} Let $(\mathscr{C}, \mathbb{E}, \mathfrak{s})$ be an extriangulated category. Let $\tilde{\mathscr{C}}$ be the idempotent completion of $\mathscr{C}$. Then $\tilde{\mathscr{C}}$ is extriangulated. Moreover, in this case the embedding $i_{\mathscr{C}} \colon \mathscr{C} \rightarrow \tilde{\mathscr{C}}$ is an extriangulated functor.
\end{theorem}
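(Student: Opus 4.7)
The plan is to equip $\tilde{\mathscr{C}}$ with a bifunctor $\tilde{\mathbb{E}}$ and realisation $\tilde{\mathfrak{s}}$ built directly from $(\mathbb{E}, \mathfrak{s})$, then to verify each of (ET1), (ET2), (ET3), (ET3)$^{\text{op}}$, (ET4), (ET4)$^{\text{op}}$ by reducing to the corresponding axiom in $\mathscr{C}$. Based on the introduction's remark that the resulting $\Ext^1$-groups are subgroups of those in $\mathscr{C}$, I would define
\[
\tilde{\mathbb{E}}\bigl((C,r),(A,p)\bigr) := \{\delta \in \mathbb{E}(C,A) : p_*\delta = \delta = r^*\delta\}
\]
as a subgroup of $\mathbb{E}(C,A)$. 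The conditions $p_*\delta = \delta = r^*\delta$ are precisely what is needed for the restrictions of $\sigma_*$ and $\tau^*$ to land in the correct subgroup when $\sigma, \tau$ are (underlying morphisms of) morphisms in $\tilde{\mathscr{C}}$, so biadditivity of $\tilde{\mathbb{E}}$ follows at once from that of $\mathbb{E}$, giving (ET1). Since the subgroup is all of $\mathbb{E}(C,A)$ when $p = 1_A$ and $r = 1_C$, the natural transformation $\Gamma$ required for $i_{\mathscr{C}}$ to be extriangulated can be taken to be the identity.

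The central challenge is the construction of $\tilde{\mathfrak{s}}$. Given $\delta \in \tilde{\mathbb{E}}((C,r),(A,p))$ realised in $\mathscr{C}$ as $\mathfrak{s}(\delta) = [A \xrightarrow{x} B \xrightarrow{y} C]$, the equality $p_*\delta = r^*\delta$ makes $(p,r) : \delta \to \delta$ a morphism of $\mathbb{E}$-extensions, and the realisation property yields $q_0 : B \to B$ with $q_0 x = xp$ and $y q_0 = r y$. The main obstacle is that $q_0$ need not be idempotent, and moreover $x$ and $y$ need not satisfy $xp = x$ or $ry = y$, which would be required for them to be morphisms $(A,p) \to (B, q_0)$ and $(B, q_0) \to (C, r)$ in $\tilde{\mathscr{C}}$. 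My approach is to exploit the indeterminacy in $q_0$: by Proposition \ref{longExact}, any two realising choices differ by a morphism $B \to B$ that factors through both $x$ and $y$, and this room should be used either to replace $q_0$ by an idempotent $q$ or, failing that, to enlarge the middle term by summands involving $A$ and $C$ so that an explicit idempotent can be written down in matrix form. Independence of the resulting equivalence class from these choices would be checked by comparing two constructions and producing an isomorphism of middle terms in $\tilde{\mathscr{C}}$ via Lemma \ref{2out3iso}, and additivity of $\tilde{\mathfrak{s}}$ (completing (ET2)) follows from additivity of $\mathfrak{s}$ together with the componentwise description of biproducts in $\tilde{\mathscr{C}}$.

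The remaining axioms I would verify by lifting to $\mathscr{C}$. For (ET3) and (ET3)$^{\text{op}}$, a commutative square in $\tilde{\mathscr{C}}$ is also a commutative square in $\mathscr{C}$, so applying the axiom there produces a connecting morphism, and Proposition \ref{longExact} together with Corollary \ref{mappingCone2} should allow this morphism to be adjusted so that it satisfies the idempotent compatibility needed to live in $\tilde{\mathscr{C}}$. For (ET4) and (ET4)$^{\text{op}}$, one lifts the two given $\mathbb{E}$-triangles to $\mathscr{C}$, applies the axiom there to obtain the object $E$ and the commutative diagram, and then promotes everything to $\tilde{\mathscr{C}}$; the crucial step is to check that the resulting extension $\delta''$ lies in the appropriate subgroup of $\mathbb{E}(E,A)$, which should follow from tracking how the idempotents $p, r$ and their analogues on $B, D, F$ interact through the compatibilities (i)--(iii) and the long exact sequence. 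Throughout, the essential work is book-keeping of idempotents rather than any new geometric input. Finally, $i_{\mathscr{C}}$ is extriangulated because for $p = 1_A$, $r = 1_C$ one may take $q_0 = 1_B$, which is already idempotent, so $\tilde{\mathfrak{s}}$ restricted to the image of $i_{\mathscr{C}}$ agrees with $\mathfrak{s}$ under the identity natural transformation $\Gamma$.
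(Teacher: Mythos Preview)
Your overall architecture matches the paper's: the bifunctor you write down as $\{\delta : p_*\delta = \delta = r^*\delta\}$ is the same subgroup the paper calls $p^*q_*\mathbb{E}(X,Y)$ (the image of an idempotent endomorphism equals its fixed points), the realisation is built by completing the idempotent pair $(p,r)$ to an idempotent on the middle term, and (ET3), (ET4) are verified by lifting to $\mathscr{C}$ and then correcting the resulting morphisms by pre- and post-composing with the relevant idempotents. So the strategy is right.

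The gap is precisely at the point you flag as ``the main obstacle''. You propose to use the indeterminacy in the realising morphism $q_0$ to replace it by an idempotent, or else to enlarge the middle term, but you give no mechanism for either. The paper resolves this with a concrete algebraic trick (its Lemma~\ref{IdemFillingMorph}): given any $i$ with $ix = xp$ and $yi = ry$, set $h := i^2 - i$; then $hx = 0$ and $yh = 0$, so $h$ factors through the weak cokernel $y$ as $h = \bar{h}y$, whence $h^2 = \bar{h}yh = 0$; finally $g := i + h - 2ih$ is checked to be idempotent and to satisfy the same intertwining relations as $i$. This is the missing idea, and without it your construction of $\tilde{\mathfrak{s}}$ does not get off the ground. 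Your fallback of enlarging the middle term is the route taken in \cite{BalSch} and \cite{wwzz}; it works but yields a less explicit description of the extriangles, which is exactly what the present paper is trying to avoid.

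Your sketch of (ET4) is also thinner than the actual argument requires. After lifting and applying (ET4) in $\mathscr{C}$ one obtains $\delta''$ and an object $E$, but the idempotent on $E$ produced by the realisation of $f'_*\delta'$ is not a priori compatible with the one needed for $\delta''$; the paper has to invoke a companion lemma (Lemma~\ref{IdemFillingMorph2}) to manufacture a new idempotent $w$ on $E$ from the idempotents $q$ on $A$ and $s$ on $C$, and then rebuild the connecting morphisms $\bar d, \bar e$ via two applications of a mapping-cone statement (Corollary~\ref{mappingCorollary}) proved directly for $(\tilde{\mathscr{C}},\mathbb{F},\mathfrak{r})$ before (ET4) is available. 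Simply ``tracking how the idempotents interact through the compatibilities'' understates the amount of work here.
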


Our first step in proving the above theorem is the construction of a bifunctor $\mathbb{F} \colon \tilde{\mathscr{C}}^{\text{op}} \times \tilde{\mathscr{C}} \rightarrow Ab$ for the extriangulated structure. Given a pair of objects $(X,p)$ and $(Y,q)$ in $\tilde{\mathscr{C}}$, we define $\mathbb{F}$ on objects by setting, $$\mathbb{F}((X,p),(Y,q)) := p^{*}q_{*}\mathbb{E}(X,Y) = \{p^{*}q_{*}\d \mid \d \in \mathbb{E}(X,Y)\}.$$

\begin{lemma}\label{Extsubgrp} Let $p \colon X \rightarrow X$ and $q \colon Y \rightarrow Y$ be morphisms in $\mathscr{C}$. Then $p^{*}q_{*}\mathbb{E}(X,Y) = \{p^{*}q_{*}\d \mid \d \in \mathbb{E}(X,Y)\}$ is a subgroup of $\mathbb{E}(X,Y)$.
\begin{proof} 
Observe that $p^{*}q_{*}\mathbb{E}(X,Y)$ is the image of $\mathbb{E}(X,Y)$ under the group homomorphism $\mathbb{E}(p,q)$, therefore $p^{*}q_{*}\mathbb{E}(X,Y)$ is a subgroup of $\mathbb{E}(X,Y)$. 
\end{proof}
\end{lemma}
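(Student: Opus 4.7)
The plan is to recognise the set $p^{*}q_{*}\mathbb{E}(X,Y)$ as the image of a single abelian group endomorphism of $\mathbb{E}(X,Y)$, after which the conclusion is immediate.

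First, I would invoke axiom (ET1), which guarantees that $\mathbb{E} \colon \mathscr{C}^{\text{op}} \times \mathscr{C} \to Ab$ is biadditive. Fixing the second variable, this means $\mathbb{E}(p, Y) \colon \mathbb{E}(X,Y) \to \mathbb{E}(X,Y)$ (which is what the notation $p^{*}$ refers to here) is a group homomorphism, and similarly $\mathbb{E}(X, q) = q_{*} \colon \mathbb{E}(X,Y) \to \mathbb{E}(X,Y)$ is a group homomorphism. By functoriality of the bifunctor $\mathbb{E}$, the composite $p^{*} \circ q_{*}$ equals $\mathbb{E}(p,q)$, and in particular is itself a group homomorphism $\mathbb{E}(X,Y) \to \mathbb{E}(X,Y)$.

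Then I would simply observe that by definition $p^{*}q_{*}\mathbb{E}(X,Y) = \{p^{*}q_{*}\d \mid \d \in \mathbb{E}(X,Y)\}$ is exactly the image of this endomorphism, and the image of any group homomorphism is a subgroup of the codomain. There is no substantial obstacle in this argument; the whole statement is a direct consequence of biadditivity together with the elementary fact that images of group homomorphisms are subgroups.
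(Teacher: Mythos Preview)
Your proposal is correct and follows essentially the same approach as the paper: both identify $p^{*}q_{*}\mathbb{E}(X,Y)$ as the image of the group homomorphism $\mathbb{E}(p,q)$ and conclude immediately. Your version simply spells out a bit more explicitly why $\mathbb{E}(p,q)$ is a group homomorphism (via biadditivity and functoriality), but the argument is the same.
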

By Lemma \ref{Extsubgrp}, $p^{*}q_{*}\mathbb{E}(X,Y)$ is an abelian group. We now need to define $\mathbb{F}$ on morphisms. 

Let $\tilde{\alpha} \colon (X,p) \rightarrow (Y,q)$ and $\tilde{\beta} \colon (U,e) \rightarrow (V,f)$ be any pair of morphisms in $\tilde{\mathscr{C}}$.  By definition these are morphisms $\alpha \colon X \rightarrow Y$ and $\beta \colon U \rightarrow V$ in $\mathscr{C}$ such that $\alpha p = q \alpha = \alpha $ and $\beta e = f \beta = \beta.$ Take $\varepsilon \in \mathbb{F}((Y,q),(U,e))$, we have that $\varepsilon = q^{*}e_{*}\d_{\varepsilon}$ for some $\d_{\varepsilon} \in \mathbb{E}(Y,U)$, hence we observe that
$$ \beta_{*}\alpha^{*}\varepsilon = \beta_{*}\alpha^{*}q^{*}e_{*}\d_{\varepsilon} = \b_{*}e_{*}\a^{*}q^{*}\d_{\varepsilon} = (\b e)_{*}(q\a)^{*}\d_{\varepsilon} = (f\b)_{*}(\a p)^{*}\d_{\varepsilon} = f_{*}\b_{*}p^{*}\a^{*}\d_{\varepsilon} = p^{*}f_{*}(\a^{*}\b_{*}\d_{\varepsilon}).$$
Since $\a^{*}\b_{*}\d_{\varepsilon}$ is in $\mathbb{E}(X,V)$ we have that $\alpha^{*}\beta_{*}\varepsilon$ is an element of $\mathbb{F}((X,p),(V,f))$. 

For the pair $(\tilde{\alpha},\tilde{\beta})$ we define $\mathbb{F}(\tilde{\alpha}^{\text{op}}, \tilde{\beta}) \colon \mathbb{F}((Y,q),(U,e)) \rightarrow \mathbb{F}((X,p),(V,f))$ as follows. For $\varepsilon \in \mathbb{F}((Y,q),(U,e))$ we set $\mathbb{F}(\tilde{\alpha}^{\text{op}}, \tilde{\beta})(\varepsilon) := \beta_{*}\alpha^{*}\varepsilon.$ It is easy to observe that $\mathbb{F}$ preserves identity morphisms from the above definition. Let $(\tilde{\alpha_{1}}, \tilde{\beta_{1}})$ and $(\tilde{\alpha_{2}}, \tilde{\beta_{2}})$ be a pair of composable morphisms in $\tilde{\mathscr{C}}^{\text{op}} \times \tilde{\mathscr{C}}$ and $(\tilde{\alpha_{1}} \tilde{\alpha_{2}} , \tilde{\beta_{1}} \tilde{\beta_{2}} )$ be their composition. Then,
$$\mathbb{F}((\tilde{\alpha_{1}} \tilde{\alpha_{2}})^{\text{op}}, \tilde{\beta_{1}} \tilde{\beta_{2}})(\varepsilon) = \mathbb{F}(\tilde{\alpha_{2}}^{\text{op}} \tilde{\alpha_{1}}^{\text{op}}, \tilde{\beta_{1}} \tilde{\beta_{2}})(\varepsilon) = \b_{1*}\b_{2*}(\a_{2}\a_{1})^{*}\varepsilon=\b_{1*}\b_{2*}\a_{1}^{*}\a_{2}^{*}\varepsilon = \b_{1*}\a_{1}^{*}\b_{2*}\a_{2}^{*}\varepsilon,$$
so $\mathbb{F}$ preserves composition. This completes the definition of the bifunctor $\mathbb{F} \colon \tilde{\mathscr{C}}^{\text{op}} \times \tilde{\mathscr{C}} \rightarrow Ab$. 

Our next step will be to verify that $\mathbb{F} \colon \tilde{\mathscr{C}}^{\text{op}} \times \tilde{\mathscr{C}} \rightarrow Ab$ is a biadditive functor. We will only show that $\mathbb{F}$ is additive in the second argument since the proof for additivity in the first argument is dual. 

\begin{proposition}\label{biadd1}
Fix $(X,p)$ in $\tilde{\mathscr{C}}$. Then the functor $\mathbb{F}((X,p), -) \colon \tilde{\mathscr{C}} \rightarrow Ab$ is an additive functor. 
\begin{proof}
For the zero object $(0,1_0)$ in $\tilde{\mathscr{C}}$, we have that $\mathbb{F}((X,p),(0,1_0)) = p^{*}(1_{0})_{*}\mathbb{E}(X,0)=\{0\}.$

Now let $(U,e)$ and $(V,f)$ be any pair of objects in $\tilde{\mathscr{C}}$. Denote by $\mathbb{F}_{X}^{U \oplus V}$ the abelian group $\mathbb{F}((X,p),(U \oplus V, e \oplus f)) = \{ p^{*}(e \oplus f)_{*} \d \mid \d \in \mathbb{E}(X,U \oplus V)\}$. We also denote by $\mathbb{F}_{X}^{U}$ the abelian group $\mathbb{F}((X,p),(U,e)) = \{ p^{*}e_{*}\varepsilon \mid \varepsilon \in \mathbb{E}(X,U)\}$. We likewise denote by  $\mathbb{F}_{X}^{V}$ the abelian group $\mathbb{F}((X,p),(V,f)) = \{ p^{*}f_{*}\t \mid \t \in \mathbb{E}(X,V)\}$.

Since $\mathbb{E}$ is a biadditive functor, there is a group isomorphism $\varphi \colon \mathbb{E}(X,U \oplus V) \rightarrow \mathbb{E}(X,U) \oplus \mathbb{E}(X,V)$, where an $\mathbb{E}$-extension $\d \in \mathbb{E}(X,U \oplus V)$ corresponds to $\varphi( \d) = (\d_{U},\d_{V})$ for some $\d_{U} \in \mathbb{E}(X,U)$ and $\d_{V} \in \mathbb{E}(X,V)$. 

Define the map $G \colon \mathbb{F}_{X}^{U \oplus V} \rightarrow \mathbb{F}_{X}^{U} \oplus \mathbb{F}_{X}^{V}$ by setting $G(p^{*}(e \oplus f)_{*}\d) = (p^{*}e_{*}\d_{U}, p^{*}f_{*}\d_{V})$ where $\varphi(\d) =(\d_{U},\d_{V})$ for some $\d_{U} \in \mathbb{E}(X,U)$ and $\d_{V} \in \mathbb{E}(X,V)$. Observe that for any pair $p^{*}(e \oplus f)_{*}\d$ and $ p^{*}(e \oplus f)_{*}\varepsilon$ where $\varphi(\d)=(\d_{U},\d_{V})$ and $\varphi(\varepsilon)=(\varepsilon_{U}, \varepsilon_{V})$ we have that $\varphi(\d + \varepsilon) = \varphi(\d) + \varphi(\varepsilon).$ So 
$$G(p^{*}(e \oplus f)_{*}\d + p^{*}(e \oplus f)_{*}\varepsilon) = G(p^{*}(e \oplus f)_{*}(\d + \varepsilon))=(p^{*}e_{*}(\d_U + \varepsilon_U), p^{*}e_{*}(\d_V +  \varepsilon_V))$$
$$= (p^{*}e_{*}\d_U, p^{*}f_{*}\d_V) + (p^{*}e_{*}\varepsilon_U, p^{*}f_{*}\varepsilon_V) = G(p^{*}(e \oplus f)_{*}\d) + G(p^{*}(e \oplus f)_{*}\varepsilon).$$
Hence $G$ is a group homomorphism. 

Define the map $H \colon \mathbb{F}_{X}^{U} \oplus \mathbb{F}_{X}^{V} \rightarrow \mathbb{F}_{X}^{U \oplus V}$ by setting $H(p^{*}e_{*}\d_U,p^{*}f_{*}\d_{V}) = p^{*}(e \oplus f)_{*}\d$ where $\varphi^{-1}(\d_U,\d_V) = \d$ for some $\d \in \mathbb{E}(X,U \oplus V)$, and where $\d_{U} \in \mathbb{E}(X,U)$ and $\d_{V} \in \mathbb{E}(X,V)$. Take any pair $(p^{*}e_{*}\d_{U} , p^{*}f_{*}\d_{V})$ and $(p^{*}e_{*}\varepsilon,p^{*}f_{*}\varepsilon_{V})$ in $\mathbb{F}_{X}^{U} \oplus \mathbb{F}_{X}^{V}$. Then if $\varphi^{-1}(\d_U,\d_V) = \d$ and $\varphi^{-1}(\varepsilon_U,\varepsilon_V) = \varepsilon$ then $\varphi^{-1}((\d_{U} + \varepsilon_{U}, \d_{V} + \varepsilon_{V})) =  \varphi^{-1}(\d_U,\d_V)+ \varphi^{-1}(\varepsilon_U,\varepsilon_V)$. So
$$ H((p^{*}e_{*}\d_U,p^{*}f_{*}\d_{V}) +(p^{*}e_{*}\varepsilon_{U},p^{*}f_{*}\varepsilon_{V}) ) = H((p^{*}e_{*}(\d_U + \varepsilon_U),p^{*}f_{*}(\d_{V} + \varepsilon_{V}))$$ 
$$= p^{*}(e \oplus f)_{*}(\d + \varepsilon) =p^{*}(e \oplus f)_{*} \d + p^{*}(e \oplus f)_{*} \varepsilon = H(p^{*}e_{*}\d_U,p^{*}f_{*}\d_{V}) + H(p^{*}e_{*}\varepsilon_{U},p^{*}f_{*}\varepsilon_{V}).$$ Hence $H$ is a group homomorphism.

We claim that $G \circ H = 1_{\mathbb{F}_{X}^{U} \oplus \mathbb{F}_{X}^{V}}$. Take $(p^{*}e_{*}\d_U,p^{*}f_{*}\d_V) 
\in \mathbb{F}_{X}^{U} \oplus \mathbb{F}_{X}^{V}$ and suppose $\varphi^{-1}(\d_U, \d_V) = \d$. Then $H((p^{*}e_{*}\d_U,p^{*}f_{*}\d_V)) = p^{*}(e \oplus f)_{*}\d$. Since $\phi(\d) =(\d_U,\d_V)$, we have that 
$$GH((p^{*}e_{*}\d_U,p^{*}f_{*}\d_V))=G(p^{*}(e \oplus f)_{*}\d)=(p^{*}e_{*}\d_U,p^{*}f_{*}\d_V).$$

We also claim that $H \circ G = 1_{\mathbb{F}_{X}^{U \oplus V}}$. Take $p^{*}(e \oplus f)_{*}\d \in \mathbb{F}_{X}^{U \oplus V}$ and suppose $\varphi(\d) = (\d_U, \d_V)$. Then $G(p^{*}(e \oplus f)_{*}\d) = (p^{*}e_{*}\d_U, p^{*}f_{*}\d_V)$. Since $\varphi^{-1}(\d_U,\d_V) = \d$, we have that 
$$ H(G(p^{*}(e \oplus f)_{*}\d))=H((p^{*}e_{*}\d_U, p^{*}f_{*}\d_V))=p^{*}(e \oplus f)_{*}\d.$$
This shows that $\mathbb{F}((X,p),(U \oplus V, e \oplus f)) \cong \mathbb{F}((X,p),(U,e)) \oplus \mathbb{F}((X,p),(V,f))$. Therefore the functor $\mathbb{F}((X,p), -) \colon \tilde{\mathscr{C}} \rightarrow Ab$ is additive. 
\end{proof}
\end{proposition}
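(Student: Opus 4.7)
The plan is to verify the two defining conditions of an additive functor to $Ab$: that $\mathbb{F}((X,p),-)$ sends the zero object to the zero group and that it sends binary biproducts to direct sums. The first condition is immediate: the zero object in $\tilde{\mathscr{C}}$ is $(0,1_{0})$, and since $\mathbb{E}$ is biadditive we have $\mathbb{E}(X,0)=0$, so $\mathbb{F}((X,p),(0,1_{0})) = p^{*}(1_{0})_{*}\mathbb{E}(X,0) = 0$.

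For the biproduct condition, fix $(U,e),(V,f)$ in $\tilde{\mathscr{C}}$. The biadditivity of $\mathbb{E}$ in its second variable gives a natural group isomorphism $\varphi \colon \mathbb{E}(X,U \oplus V) \xrightarrow{\sim} \mathbb{E}(X,U) \oplus \mathbb{E}(X,V)$, sending $\d$ to $((\pi_{U})_{*}\d,(\pi_{V})_{*}\d)$ where $\pi_{U},\pi_{V}$ are the canonical projections (with inverse built from the injections $\iota_{U},\iota_{V}$). My plan is to transport $\varphi$ to an isomorphism between $\mathbb{F}_{X}^{U \oplus V}$ and $\mathbb{F}_{X}^{U} \oplus \mathbb{F}_{X}^{V}$ by producing mutually inverse group homomorphisms. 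Concretely, the map $G$ should send $p^{*}(e \oplus f)_{*}\d$ to $(p^{*}e_{*}\d_{U},p^{*}f_{*}\d_{V})$ with $(\d_{U},\d_{V})=\varphi(\d)$, and $H$ should send $(p^{*}e_{*}\d_{U},p^{*}f_{*}\d_{V})$ to $p^{*}(e \oplus f)_{*}\varphi^{-1}(\d_{U},\d_{V})$. Both are group homomorphisms by additivity of $\varphi$ together with additivity of the pullback/pushforward operations, and the relations $G \circ H = \mathrm{id}$ and $H \circ G = \mathrm{id}$ follow directly from $\varphi \circ \varphi^{-1} = \mathrm{id}$ and $\varphi^{-1} \circ \varphi = \mathrm{id}$.

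The main obstacle is well-definedness of $G$ and $H$, because a single element of $\mathbb{F}_{X}^{U \oplus V}$ may be represented by many different $\d \in \mathbb{E}(X, U \oplus V)$. To deal with this, the key observation I plan to use is the identity $\pi_{U}(e \oplus f) = e\pi_{U}$ (and the analogue $\pi_{V}(e \oplus f) = f\pi_{V}$), which gives $(\pi_{U})_{*}(e \oplus f)_{*} = e_{*}(\pi_{U})_{*}$. Applying $(\pi_{U})_{*}$ to $p^{*}(e \oplus f)_{*}\d$ then yields $p^{*}e_{*}(\pi_{U})_{*}\d = p^{*}e_{*}\d_{U}$, which depends only on the element $p^{*}(e \oplus f)_{*}\d$ itself and not on the chosen representative $\d$; this realises the first coordinate of $G$ as the honest map $(\pi_{U})_{*}\colon \mathbb{F}_{X}^{U \oplus V} \to \mathbb{F}_{X}^{U}$ (and likewise for the second coordinate). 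The well-definedness of $H$ follows by the dual argument using $(e \oplus f)\iota_{U} = \iota_{U}e$ and $(e \oplus f)\iota_{V} = \iota_{V}f$, which identify $H$ with the sum of $(\iota_{U})_{*}$ and $(\iota_{V})_{*}$ restricted to the relevant subgroups. Once well-definedness is in place, the verification that $G$ and $H$ are mutually inverse group homomorphisms reduces to routine bookkeeping with $\varphi$.
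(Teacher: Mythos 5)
Your proposal is correct and follows essentially the same route as the paper: both proofs transport the biadditivity isomorphism $\varphi \colon \mathbb{E}(X,U\oplus V)\xrightarrow{\sim}\mathbb{E}(X,U)\oplus\mathbb{E}(X,V)$ to a pair of mutually inverse homomorphisms $G$ and $H$ between $\mathbb{F}_{X}^{U\oplus V}$ and $\mathbb{F}_{X}^{U}\oplus\mathbb{F}_{X}^{V}$. The one place where you go beyond the paper is the well-definedness of $G$ and $H$: the paper defines $G(p^{*}(e\oplus f)_{*}\d)$ in terms of a chosen preimage $\d$ and never checks independence of that choice, whereas you observe that $\pi_{U}(e\oplus f)=e\pi_{U}$ identifies the first coordinate of $G$ with the restriction of $(\pi_{U})_{*}$ to $\mathbb{F}_{X}^{U\oplus V}$ (and dually for $H$ via the injections), which makes the maps manifestly independent of the representative. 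This is a genuine, if small, improvement in rigour over the paper's argument; the rest of your verification (homomorphism property and mutual invers
ness via $\varphi\circ\varphi^{-1}=\mathrm{id}$ and $\varphi^{-1}\circ\varphi=\mathrm{id}$) matches the paper's bookkeeping.
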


\begin{proposition}\label{biadd2}
Fix $(X,p)$ in $\tilde{\mathscr{C}}$. Then the functor $\mathbb{F}(-,(X,p)) \colon \tilde{\mathscr{C}}^{\text{op}} \rightarrow Ab$ is an additive functor. 
\begin{proof}
The proof is dual to the proof of the previous proposition. 
\end{proof}
\end{proposition}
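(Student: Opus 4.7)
The plan is to mirror the proof of Proposition \ref{biadd1}, swapping the roles of the contravariant and covariant arguments of $\mathbb{E}$ throughout. That is, where the previous proof exploited the biadditivity isomorphism $\mathbb{E}(X, U \oplus V) \cong \mathbb{E}(X,U) \oplus \mathbb{E}(X,V)$ coming from additivity of $\mathbb{E}$ in the second argument, here I will use the biadditivity isomorphism $\psi \colon \mathbb{E}(U \oplus V, X) \to \mathbb{E}(U,X) \oplus \mathbb{E}(V,X)$ coming from additivity of $\mathbb{E}$ in the first argument.

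First I would handle the zero object. Since $\mathbb{E}(0,X) = 0$, we get $\mathbb{F}((0,1_0),(X,p)) = (1_0)^* p_* \mathbb{E}(0,X) = \{0\}$.

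Next, for any pair $(U,e)$ and $(V,f)$ in $\tilde{\mathscr{C}}$, I would show that $\mathbb{F}((U\oplus V, e \oplus f),(X,p)) \cong \mathbb{F}((U,e),(X,p)) \oplus \mathbb{F}((V,f),(X,p))$ by writing down explicit mutually inverse group homomorphisms. Define
\[G \colon (e\oplus f)^* p_* \mathbb{E}(U\oplus V, X) \longrightarrow e^* p_* \mathbb{E}(U,X) \oplus f^* p_* \mathbb{E}(V,X)\]
by $G((e \oplus f)^* p_* \delta) = (e^* p_* \delta_U,\, f^* p_* \delta_V)$, where $\psi(\delta) = (\delta_U, \delta_V)$, and define its candidate inverse $H$ by $H(e^* p_* \delta_U, f^* p_* \delta_V) = (e\oplus f)^* p_* \delta$ where $\delta = \psi^{-1}(\delta_U, \delta_V)$. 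Since $\psi$ is a group isomorphism, both $G$ and $H$ are group homomorphisms by the same calculation carried out in the proof of Proposition \ref{biadd1}; the verifications $G \circ H = \mathrm{id}$ and $H \circ G = \mathrm{id}$ are formally identical.

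The only thing one has to check a little carefully is that $G$ and $H$ are well-defined as maps on the subgroups $p^* q_* \mathbb{E}$, not just on $\mathbb{E}$ itself; but this is automatic because the biadditivity isomorphism $\psi$ is natural, so it intertwines the action of $e \oplus f$ in the first argument with the action of $e$ and $f$ separately, and it commutes with the postcomposition action of $p_*$. Consequently no real work beyond what was already done in Proposition \ref{biadd1} is required. Since there is no genuine obstacle here and the argument is a line-by-line duality, I would simply remark that the proof is dual, as the author has done, rather than reproduce the calculation in full.
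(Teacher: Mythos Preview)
Your proposal is correct and follows exactly the same approach as the paper, which simply states that the proof is dual to that of Proposition~\ref{biadd1}. You have made explicit what that duality entails, but no further comment is needed.
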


Having verified that the functor $\mathbb{F}$ is biadditive, the next thing we need to do is to define a correspondence which will be a realisation. In order to define the correspondence, we need the following lemma. This lemma is a generalisation of \cite[Lemma 1.13]{BalSch} in the setting of extriangulated categories. The proof is also a straightforward adaptation. 

\begin{lemma}\label{IdemFillingMorph} Let $(\mathscr{A}, \mathbb{G},\mathfrak{t})$ be a triple satisfying (ET1) and (ET2). Let $A,B,C$ be objects of $\mathscr{A}$. Let $\d$ be an extension in $\mathbb{G}(C,A)$ with $\mathfrak{t}(\d) = [ A \overset{a}{\longrightarrow} B \overset{b}{\longrightarrow} C]$. Let $(e,f) \colon \d \rightarrow \d$  be a morphism of $\mathbb{G}$-extensions where $e \colon A \rightarrow A$ and $f \colon C \rightarrow C$ are idempotent morphisms.  Then there exists an idempotent morphism $g \colon B \rightarrow B$ such that the triple $(e,g,f)$ realises the morphism of $\mathbb{G}$-extensions $(e,f) \colon \d \rightarrow \d$. 
\begin{center}
\begin{tikzcd}
A \arrow[r,"a"] \arrow[d,"e"]
& B \arrow[r,"b"] \arrow[d, dashed, "g"]
& C \arrow[d,"f"]
\\
A \arrow[r,"a"]
& B \arrow [r,"b"]
& C
\end{tikzcd}
\end{center}

\begin{proof}
Since $(e,f) \colon \d \rightarrow \d$ is a morphism of $\mathbb{G}$-extensions and $\mathfrak{t}$ is a realisation, there exists a morphism $i \colon B \rightarrow B$ such that the following diagram commutes. 
\begin{center}
\begin{tikzcd}
A \arrow[r,"a"] \arrow[d,"e"]
& B \arrow[r,"b"] \arrow[d, dashed, "i"]
& C \arrow[d,"f"]
\\
A \arrow[r,"a"]
& B \arrow [r,"b"]
& C
\end{tikzcd}
\end{center}
Let $h := i^{2}-i$. Then we have that $ha = (i^2-i)a = 0$ and $bh=b(i^{2}-i)=0$ from the commutativity of the above diagram and the fact that $e$ and $f$ are idempotent. By the exact sequences in Proposition \ref{longExact}, $b$ is a weak cokernel of $a$ so there exists $\bar{h} \colon C \rightarrow B$ such that $h=\bar{h}b.$ So we observe that $h^{2}=\bar{h}bh =0$. 

Let $g = i + h -2ih$. Since the morphisms $i$ and $h$ commute and $h^2=0$ we have that $g^{2} = i^2 + 2ih-4i^2h$. Since $i^2 = i + h$ we have that $g^2 = i +h + 2ih -4ih =g$. We have that $ga = ia + ha -2iha = ia = ae$, since $ha=0$. We likewise have that $bg = bi + bh-2bih = bi + bh -2bhi = bi = fb$. Therefore, the above diagram commutes if we replace $i$ with $g$. This completes the proof. 
\end{proof}
\end{lemma}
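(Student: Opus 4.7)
The plan is to first invoke the realisation property supplied by (ET2) to produce a (not necessarily idempotent) filler, and then to correct it to an idempotent using the weak cokernel structure guaranteed by the long exact sequence in Proposition \ref{longExact}. More precisely, since $(e,f) \colon \d \to \d$ is a morphism of $\mathbb{G}$-extensions, the realisation $\mathfrak{t}$ yields some $i \colon B \to B$ such that $ia = ae$ and $bi = fb$. The task reduces to modifying $i$ so that idempotency is achieved without disturbing these two commutativity identities.

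Next I would measure the failure of $i$ to be idempotent by setting $h := i^{2} - i$ and showing that $h$ factors through both $a$ and $b$ in a controlled way. Using $ia = ae$ together with $e^{2} = e$, a direct computation gives $i^{2}a = i(ae) = (ia)e = ae^{2} = ae = ia$, so $ha = 0$; dually, using $bi = fb$ and $f^{2} = f$, one obtains $bh = 0$. Now Proposition \ref{longExact} tells us that $b$ is a weak cokernel of $a$ (equivalently, the contravariant sequence starting from $\mathscr{A}(-,B)$ is exact at $\mathscr{A}(-,B)$), and dually $a$ is a weak kernel of $b$. The factorisation $h = \bar{h} b$ obtained from $ha = 0$ then immediately forces $h^{2} = \bar{h}(bh) = 0$, which is the crucial nilpotency property I will exploit.

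With $h^{2} = 0$ and $ih = hi$ (trivially, as $h$ is a polynomial in $i$), the problem becomes an elementary ring-theoretic one: given $i$ with $i^{2} - i = h$ and $h^{2} = 0$, produce an idempotent in the commutative subring generated by $i$ that still has the same action on the boundary maps. The natural Ansatz is $g = i + h - 2ih$, since modulo $h$ this agrees with $i$, and the combination is chosen so that $g^{2} - g$ lies in the ideal $(h^{2}) = 0$. Substituting $i^{2} = i + h$ and $h^{2} = 0$ in the expansion of $g^{2}$ verifies $g^{2} = g$; because $h$ kills $a$ on the right and $b$ on the left, every term involving $h$ in $ga$ and $bg$ vanishes, leaving $ga = ia = ae$ and $bg = bi = fb$.

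The main obstacle is the third paragraph: guessing (or motivating) the correct polynomial correction $g = i + h - 2ih$. The identities $ha = 0$ and $bh = 0$ combined with $h^{2} = 0$ do essentially all the heavy lifting once the right formula is written down, but the step of passing from a filler $i$ to an idempotent filler is the conceptual heart of the lemma and is what distinguishes this result from the ordinary (ET3)-style filling of a square. After $g$ is produced, the diagram commutes by the same calculations that gave $ga = ae$ and $bg = fb$, so $(e,g,f)$ realises $(e,f) \colon \d \to \d$ as required.
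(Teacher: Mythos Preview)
Your proposal is correct and follows essentially the same route as the paper's proof: obtain a filler $i$ from the realisation property, set $h=i^{2}-i$, use $ha=0$ and the weak-cokernel property of $b$ (from Proposition~\ref{longExact}) to deduce $h^{2}=0$, and then replace $i$ by the idempotent $g=i+h-2ih$. The only extra content in your write-up is the ring-theoretic motivation for the formula $g=i+h-2ih$, which the paper simply asserts.
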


\begin{lemma}\label{IdemFillingMorph2}
Let $(\mathscr{A}, \mathbb{G},\mathfrak{t})$ be a triple satisfying (ET1) and (ET2). Let $A,B,C$ be objects of $\mathscr{A}$. Let $\d$ be an extension in $\mathbb{G}(C,A)$ with $\mathfrak{t}(\d) = [ A \overset{a}{\longrightarrow} B \overset{b}{\longrightarrow} C]$. Let $(e,f) \colon \d \rightarrow \d$  be a morphism of $\mathbb{G}$-extensions realised by $(e,i,f)$ where $e \colon A \rightarrow A$ and $i \colon B \rightarrow B$ are idempotent morphisms. Then there exists an idempotent morphism $g \colon C \rightarrow C$ such that $(e,g) \colon \d \rightarrow \d$ is a morphism of $\mathbb{G}$-extensions realised by $(e,i,g)$. 
Dually if we instead assume that $i \colon B \rightarrow B$ and $f \colon C \rightarrow C$ are idempotent. Then there exists an idempotent morphism $k \colon A \rightarrow A$ such that $(k,f) \colon \d \rightarrow \d$ is a morphism of $\mathbb{G}$-extensions realised by $(k,i,f)$. 
\begin{proof}
Since $(e,f) \colon \d \rightarrow \d$ is realised by $(e,i,f)$, we have the following commutative diagram. 
\begin{center}
\begin{tikzcd}
A \arrow[r,"a"] \arrow[d,"e"]
& B \arrow[r,"b"] \arrow[d, "i"]
& C \arrow[d,"f"]
\\
A \arrow[r,"a"]
& B \arrow [r,"b"]
& C
\end{tikzcd}
\end{center}
Set $h := f^2 - f$. Then we have that $$hb = f^{2}b - fb = f(bi)-bi = (fb)i-bi=bi^{2}-bi = bi - bi =0.$$ We also have that
$$h^{*}\d = (f^{2}-f)^{*}\d = f^{*}f^{*}\d - f^{*}\d = f^{*}e_{*}\d - e_{*}\d = e_{*}(f^{*}\d)-e_{*}\d = (e^{2})_{*}\d -e_{*}\d=e_{*}\d - e_{*}\d=0.$$ 
By Proposition \ref{longExact} we have the following exact sequence in $Ab$.
\begin{center}
\begin{tikzcd}
\mathscr{C}(C,A) \arrow[r, "\mathscr{C}(C{,}a)"]
& \mathscr{C}(C,B) \arrow[r, "\mathscr{C}(C{,}b)"]
& \mathscr{C}(C,C) \arrow[r, "(\d_{\#})_{C}"]
& \mathbb{E}(C,A) \arrow[r, "\mathbb{E}(C{,}a)"]
& \mathbb{E}(C,B) \arrow[r,"\mathbb{E}(C{,}b)"]
& \mathbb{E}(C,C)
\end{tikzcd}
\end{center}
Since $h^{*}\d = (\d_{\#})_{C}(h) =0$, it follows from the exactness of the above sequence that there exists a morphism $\bar{h} \colon C \rightarrow B$ such that $h = b\bar{h}$. From this we can observe that $h^{2} = b\bar{h}b\bar{h} = hb\bar{h} = 0$. Now set $g:= f+h-2fh$, as $f$ and $h$ commute and $h^2=0$, we then have that
$$g^{2} = (f+h-2fh)^2 = f^2 + 2fh-4f^2h.$$
By noting that $f^2 = f +h$ we obtain 
$$g^2 = f + h -2fh = g.$$
It is then easy to check that,
$$g^{*}\d = f^{*}\d +h^{*}\d -2h^{*}f^{*}\d = e_{*}\d + 0 -2h^{*}e_{*}\d = e_{*}\d -2e_{*}h^{*}\d = e_{*}\d -2e_{*}0 = e_{*}\d,$$
and $$gb = (f+h-2fh)b= fb + hb -2fhb = bi + 0 -2f0 =bi.$$
We have shown that $g \colon C \rightarrow C$ is an idempotent morphism such that $(e,g) \colon \d \rightarrow \d$ is a morphism of $\mathbb{G}$-extensions realised by $(e,i,g)$.  The proof of the other statement is dual.
\end{proof}
\end{lemma}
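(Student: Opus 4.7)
The plan is to correct $f$ by a small perturbation so that the resulting morphism is idempotent while still fitting into the same realised diagram and inducing the same morphism of $\mathbb{G}$-extensions. This mirrors the classical trick for lifting idempotents modulo a nilpotent ideal. Since $(e,i,f)$ realises $(e,f) \colon \d \to \d$ with $e,i$ idempotent, we already have the relations $fb = bi$ and $f^{*}\d = e_{*}\d$ at our disposal.

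First I would set $h \defeq f^{2} - f$ and establish two vanishing identities: $hb = 0$, obtained by expanding $hb = f(fb) - fb = (bi)i - bi = bi^{2} - bi$ and using $i^{2} = i$; and $h^{*}\d = 0$, via the dual rearrangement $h^{*}\d = f^{*}(f^{*}\d) - f^{*}\d = e_{*}(e_{*}\d) - e_{*}\d$ together with $e^{2} = e$.

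The crux of the argument is then to deduce $h^{2} = 0$. For this I would invoke the long exact sequence associated to the extriangle of $\d$ from Proposition \ref{longExact} (which tacitly requires (ET3) and (ET3)$^{\text{op}}$ in addition to the stated hypotheses): the map $(\d_{\#})_{C} \colon \mathscr{A}(C,C) \to \mathbb{G}(C,A)$ sends $h$ to $h^{*}\d = 0$, so by exactness at $\mathscr{A}(C,C)$ there exists $\bar{h} \colon C \to B$ with $h = b\bar{h}$, whence $h^{2} = (hb)\bar{h} = 0$.

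With $h^{2} = 0$ and the observation that $fh = hf$ (immediate since $h$ is a polynomial in $f$), the natural candidate is the classical idempotent-lifting formula $g \defeq f + h - 2fh$. I would then check directly that $g^{2} = g$ (the computation collapses once we substitute $f^{2} = f + h$), that $gb = bi$, and that $g^{*}\d = e_{*}\d$; the last two are short expansions using $hb = 0$, $h^{*}\d = 0$, and $(fh)^{*}\d = e_{*}h^{*}\d = 0$. For the dual statement, the symmetric strategy with $h' \defeq e^{2} - e$ goes through unchanged: one verifies $ah' = 0$ and $h'_{*}\d = 0$, uses the covariant half of Proposition \ref{longExact} to factor $h' = \bar{h'}a$ and hence $(h')^{2} = 0$, and takes $k \defeq e + h' - 2eh'$. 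The main conceptual obstacle, once the correction formula is guessed, is the step $h^{2} = 0$; this is precisely where the weak (co)kernel behaviour packaged in (ET3) and (ET3)$^{\text{op}}$ enters and makes the classical lifting argument work in this setting.
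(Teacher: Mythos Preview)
Your proposal is correct and follows essentially the same route as the paper: set $h=f^{2}-f$, verify $hb=0$ and $h^{*}\d=0$, use the long exact sequence of Proposition~\ref{longExact} to factor $h=b\bar h$ and deduce $h^{2}=0$, then take $g=f+h-2fh$ and check the three required identities. Your parenthetical remark that invoking Proposition~\ref{longExact} tacitly needs (ET3) and (ET3)$^{\text{op}}$, beyond the (ET1)--(ET2) hypotheses stated in the lemma, is a valid observation about the paper's own proof as well.
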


\begin{definition} Let $\mathfrak{r}$ be the correspondence between $\mathbb{F}$-extensions and equivalence classes of sequences of morphisms in $\tilde{\mathscr{C}}$ defined as follows. For any objects $Z,X$ in $\mathscr{C}$ and idempotent morphisms $p \colon Z \rightarrow Z, q \colon X \rightarrow X$ in $\mathscr{C}$, let $\d = p^{*}q_{*}\varepsilon$ be an extension in $\mathbb{F}((Z,p),(X,q))$ such that $$\mathfrak{s}(p^{*}q_{*}\varepsilon) = [ X \overset{x}{\longrightarrow} Y \overset{y}{\longrightarrow} Z].$$ 

We set $$\mathfrak{r}(\d) := [ (X,q) \overset{xq}{\longrightarrow} (Y,r) \overset{py}{\longrightarrow} (Z,p)],$$ where $r \colon Y \rightarrow Y$ is an idempotent morphism such that $rx=xq$ and $yr=py$ obtained by application of Lemma \ref{IdemFillingMorph}. 
\end{definition}

\begin{remark}\label{wellDefn} Before we can proceed any further, we need to show that the above definition of $\mathfrak{r}$ is well-defined in the following sense. Given an $\mathbb{F}$-extension $\d$, $\mathfrak{r}(\d)$ is defined in terms of a choice of the representative $\mathfrak{s}(\d)$. We will show that it is independent of this choice. Moreover, in the above definition, the idempotent morphism $r \colon Y \rightarrow Y$ such that $rx=xq$ and $yr = py$ need not be unique. We will show that all choices of such an idempotent give equivalent sequences.
\end{remark}

\begin{lemma}\label{shortExactseq1} Let $\d = p^{*}q_{*}\varepsilon$ be an extension in $\mathbb{F}((Z,p),(X,q))$ such that $$\mathfrak{s}(p^{*}q_{*}\varepsilon) = [ X \overset{x}{\longrightarrow} Y \overset{y}{\longrightarrow} Z].$$ For any object $(A,e) \in \tilde{\mathscr{C}}$, the following sequence in $Ab$ is exact.
\begin{center}
\begin{tikzcd}[column sep=4.5em,row sep=4.5em]
\tilde{\mathscr{C}}((Z,p),(A,e)) \arrow[r, "\tilde{\mathscr{C}}(py{,}(A{,}e))"]
& \tilde{\mathscr{C}}((Y,r),(A,e))  \arrow[r, "\tilde{\mathscr{C}}(xq{,}(A{,}e))"]
& \tilde{\mathscr{C}}((X,q),(A,e)) 
\end{tikzcd}
\end{center}
In particular, in the sequence 
\begin{center}
\begin{tikzcd}
(X,q) \arrow[r,"xq"] 
& (Y,r) \arrow[r,"py"]
& (Z,p) 
\end{tikzcd}
\end{center}
the morphism $py$ is a weak cokernel of $xq$ and the morphism $xq$ is a weak kernel of $py$.

\begin{proof}
Since $(\mathscr{C},\mathbb{E},\mathfrak{s})$ is extriangulated and $\mathfrak{s}(p^{*}q_{*}\varepsilon) = [ X \overset{x}{\longrightarrow} Y \overset{y}{\longrightarrow} Z]$, we have that $y \circ x = 0$ by Lemma \ref{weakKernelCokernel}. So it follows that for $f \in \tilde{\mathscr{C}}((Z,p),(A,e))$, $$(f  \circ py) \circ (xq) = (fp)(y\circ x) = 0.$$ That is to say im($\tilde{\mathscr{C}}(py{,}(A{,}e))) \subseteq \text{ker}(\tilde{\mathscr{C}}(xq{,}(A{,}e)))$.

Recall that $r$ is an idempotent such that $xq = rx$ and $py = yr$. Given $g \in \tilde{\mathscr{C}}((Y,r),(A,e))$ such that $g \circ xq = 0$, we have that $gxq = grx =0$. By the exactness of the sequence in Proposition \ref{longExact}, $y$ is a weak cokernel of $x$, so we have that there exists a morphism $h \colon Z \rightarrow A$ such that $gr = hy$. Since $gr =hy$ we have that $$gr = gr^2 = hyr = hpy.$$ Moreover for the morphism $ehp \colon (Z,p) \rightarrow (A,e)$ we have that $$gr = egy = ehpy = ehppy = (ehp) \circ py.$$ This is to say $gr \in \text{im}(\tilde{\mathscr{C}}(py{,}(A{,}e)))$, in particular $\text{ker}(\tilde{\mathscr{C}}(xq{,}(A{,}e))) \subseteq \text{ im}\tilde{\mathscr{C}}(py{,}(A{,}e))).$ This completes the proof.
\end{proof}
\end{lemma}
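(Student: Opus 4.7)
The plan is to verify both inclusions for exactness at the middle term, and to this end I will exploit the relations $p^2=p$, $q^2=q$, $e^2=e$, $rx=xq$, $yr=py$, together with the fact that any $\varphi\in \tilde{\mathscr{C}}((M,m),(N,n))$ satisfies $\varphi m = n \varphi = \varphi$. The whole argument reduces to two invocations of results in the preliminaries: Lemma \ref{weakKernelCokernel} and the long exact sequence of Proposition \ref{longExact}.

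For the inclusion $\operatorname{im}\tilde{\mathscr{C}}(py,(A,e)) \subseteq \ker \tilde{\mathscr{C}}(xq,(A,e))$, I would simply observe that $X \xrightarrow{x} Y \xrightarrow{y} Z$ is an $\mathbb{E}$-triangle realising $p^{*}q_{*}\varepsilon$, so Lemma \ref{weakKernelCokernel} forces $yx=0$. Hence for any $f\in\tilde{\mathscr{C}}((Z,p),(A,e))$ the composite $(f\circ py)\circ xq = fp(yx)q$ vanishes.

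The reverse inclusion is the substantive part. Given $g\in\tilde{\mathscr{C}}((Y,r),(A,e))$ with $g\circ xq=0$, I would first upgrade this to $gx=0$ in $\mathscr{C}$: since $g=gr$ and $rx=xq$, one has $gx=grx=g(xq)=0$. By Proposition \ref{longExact}, $y$ is a weak cokernel of $x$ in $\mathscr{C}$, so there exists $h\colon Z\to A$ with $hy=g$. The trouble is that $h$ need not satisfy $hp=h$ or $eh=h$, so it does not a priori represent a morphism in $\tilde{\mathscr{C}}((Z,p),(A,e))$.

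The key idea is to correct $h$ by idempotents from both sides, setting $h':=ehp\colon Z\to A$. Idempotency of $e$ and $p$ gives $eh'p=h'$, so $h'$ is a bona fide morphism $(Z,p)\to(A,e)$ in $\tilde{\mathscr{C}}$. To confirm $h'\circ py=g$, I would chain the relations $g=eg=egr=e h(yr)=ehpy=h'\circ py$, using $eg=g$, $gr=g$, and $yr=py$ in turn. The only delicate step in the whole argument is recognising that the lift produced by the weak cokernel property fails to live in $\tilde{\mathscr{C}}$ and that conjugating by the ambient idempotents repairs this without destroying the lifting identity; once that is seen, the rest is bookkeeping.
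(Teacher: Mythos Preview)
Your proof is correct and follows essentially the same route as the paper's: both use Lemma~\ref{weakKernelCokernel} for the easy inclusion, then for the reverse inclusion lift $g$ through the weak cokernel $y$ in $\mathscr{C}$ via Proposition~\ref{longExact} and correct the resulting $h$ to $ehp$ so that it lands in $\tilde{\mathscr{C}}((Z,p),(A,e))$. The only cosmetic difference is that you record $hy=g$ directly (using $g=gr$), whereas the paper writes $hy=gr$; your final chain of equalities is slightly cleaner but otherwise identical in content.
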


\begin{proposition}\label{independentOfRep} Let $\d$ be an extension in $\mathbb{F}((C,p),(A,q))$ realised under $\mathfrak{s}$ by the following sequences,  $$A \overset{a}{\longrightarrow} B \overset{b}{\longrightarrow} C,$$
$$A \overset{x}{\longrightarrow} Y \overset{y}{\longrightarrow} C.$$ Then given idempotents $r \colon B \rightarrow B$ and $w \colon Y \rightarrow Y$ such that 
\begin{equation}\label{idemRelns1} aq = ra, \text{ } pb = br  \text{ and } xq = wx, \text{ } py=yw \end{equation} the following sequences are equivalent,
$$(A,q) \overset{aq}{\longrightarrow} (B,r) \overset{pb}{\longrightarrow} (C,p),$$
$$(A,q) \overset{xq}{\longrightarrow} (Y,w) \overset{py}{\longrightarrow} (C,p).$$
That is to say, $\mathfrak{r}$ is well-defined.
\begin{proof}
Since the sequences $A \overset{a}{\longrightarrow} B \overset{b}{\longrightarrow} C,$ and $A \overset{x}{\longrightarrow} Y \overset{y}{\longrightarrow} C$ both realise $\d$, they are by definition equivalent in $\mathscr{C}$. That is to say we have the following commutative diagram,
\begin{equation}\label{indepDiag1}
\begin{tikzcd}
A \arrow[r,"a"] \arrow[d,equal]
& B \arrow[r,"b"] \arrow[d,"f"]
& C \arrow[d,equal]
\\
A \arrow[r,"x"]
&Y \arrow[r,"y"]
&C 
\end{tikzcd}
\end{equation}
where the morphism $f \colon B \rightarrow Y$ is an isomorphism. Now consider the following diagram. 

\begin{equation}\label{indepDiag2}
\begin{tikzcd}
(A,q) \arrow[r,"aq"] \arrow[d,equal]
& (B,r) \arrow[r,"pb"] \arrow[d,"wfr"]
& (C,p) \arrow[d,equal]
\\
(A,q) \arrow[r,"xq"]
& (Y,w) \arrow[r,"py"]
& (C,p) 
\end{tikzcd}
\end{equation}
From the relations in (\ref{idemRelns1}) and those arising from the commutative diagram (\ref{indepDiag1}), we can observe the following,
\begin{equation}\label{eqnSet1}  wf(raq) = wf(aq) = wxq = xq, \end{equation}
\begin{equation}\label{eqnSet2} py(wfr) = p(yf)r = p(br) = pb. \end{equation}
That is to say, diagram (\ref{indepDiag2}) commutes. 

From (\ref{eqnSet1}) we can see that $wfraq = wfaq$ therefore $(wfr-wf)aq =0$. By Lemma \ref{shortExactseq1}, $pb$ is a weak cokernel of $aq$, so there exists a morphism $h \colon (C,p) \rightarrow (Y,w)$ such that $wfr-wf = hpb$. Since $r$ is idempotent and satisfies the relations in (\ref{idemRelns1}), we have that $$0 = wfr -wfr = wfr^2 -wfr = hpbr = hpb,$$ in particular $$wfr -wf = hpb = 0.$$ Hence $wfr=wf$. 

From (\ref{eqnSet2}) we have that $py(wfr) = py(fr)$ therefore $py(wfr-fr) = 0.$ By Lemma \ref{shortExactseq1}, we have that $xq$ is a weak kernel of $py$ and there exists a morphism $g \colon (B,r) \rightarrow (A,q)$ such that $wfr-fr = xqg.$ Since $w$ is idempotent and satisfies the relations in (\ref{idemRelns1}), we have that $$0 = wfr -wfr = w^2fr -wfr =wxqg =xqg,$$ in particular $$wfr -fr = xqg = 0.$$ Hence $wfr=fr$.

Now consider the morphism $rf^{-1}w \colon (Y,w) \rightarrow (B,r)$ in $\tilde{\mathscr{C}}$. Since $wfr = wf$ and $wfr=fr$, we can observe that
$$ wfr \circ rf^{-1}w = wfrrf^{-1}w = (wfr)f^{-1}w = (wf)f^{-1}w = w,$$
$$rf^{-1}w \circ wfr =rf^{-1}wwfr =rf^{-1}(wfr) = rf^{-1}(fr) = r.$$ 
This is to say the morphisms $rf^{-1}w \colon (Y,w) \rightarrow (B,r)$ and $wfr \colon (B,w) \rightarrow (Y,w)$ are mutual inverses in $\tilde{\mathscr{C}}$. So diagram (\ref{indepDiag2}) does indeed give an equivalence of sequences in $\mathscr{C}$. This completes the proof.
\end{proof}
\end{proposition}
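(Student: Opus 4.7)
The plan is to exhibit an explicit isomorphism in $\tilde{\mathscr{C}}$ between the two candidate sequences, realising the equivalence of Definition \ref{equivReln}. Since both $A \overset{a}{\to} B \overset{b}{\to} C$ and $A \overset{x}{\to} Y \overset{y}{\to} C$ realise the same extension $\d = p^{*}q_{*}\varepsilon$ under $\mathfrak{s}$, they are equivalent in $\mathscr{C}$, so there is an isomorphism $f \colon B \to Y$ with $fa = x$ and $yf = b$. The natural lift to $\tilde{\mathscr{C}}$ is the conjugation $\tilde{f} \defeq w f r \colon (B,r) \to (Y,w)$, with candidate inverse $\tilde{g} \defeq r f^{-1} w \colon (Y,w) \to (B,r)$.

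First I would check that $\tilde{f}$ and $\tilde{g}$ are genuine morphisms in $\tilde{\mathscr{C}}$, which is immediate from $w^{2}=w$ and $r^{2}=r$. Next, a direct calculation using the relations (\ref{idemRelns1}) together with $fa=x$ and $yf=b$ yields $(wfr)(aq) = xq$ and $(py)(wfr) = pb$, so that the square analogous to (\ref{indepDiag2}) commutes. The work then reduces to showing that $\tilde{f}$ and $\tilde{g}$ are mutually inverse.

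The main obstacle, and the essential use of the extriangulated structure, is the simplification $wfr = wf = fr$; without it the compositions $\tilde{f}\tilde{g}$ and $\tilde{g}\tilde{f}$ do not collapse to $w$ and $r$ by bare cancellation of $ff^{-1}$. To obtain $wfr = wf$, I would observe that $(wfr - wf)\circ aq = 0$: then by Lemma \ref{shortExactseq1} the map $pb$ is a weak cokernel of $aq$, so the difference factors as $h\circ pb$ for some $h$. Right-multiplying by $r$ and using $pb\cdot r = pb$ together with $(wfr - wf)r = 0$ forces $hpb = 0$, hence $wfr = wf$. The dual argument, using that $xq$ is a weak kernel of $py$, yields $wfr = fr$ after left-multiplying by the idempotent $w$.

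Once these simplifications are in hand, verification of the inverse relations is mechanical: $\tilde{f}\tilde{g} = wfr\cdot rf^{-1}w = wf\cdot f^{-1}w = w$ (using $wfr=wf$ and $r^{2}=r$), and similarly $\tilde{g}\tilde{f} = rf^{-1}w\cdot wfr = rf^{-1}\cdot fr = r$ (using $wfr=fr$ and $w^{2}=w$). This shows $\tilde{f}$ is an isomorphism in $\tilde{\mathscr{C}}$ fitting into a commutative diagram with the identities on $(A,q)$ and $(C,p)$, which is precisely the equivalence of sequences required. As a bonus, the same argument applied with $(B,r)=(Y,w)$ and $f=1_B$ shows the independence of $\mathfrak{r}(\d)$ from the choice of idempotent filling produced by Lemma \ref{IdemFillingMorph}, dispatching both concerns in Remark \ref{wellDefn} simultaneously.
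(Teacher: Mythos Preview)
Your proposal is correct and follows essentially the same route as the paper: you use the isomorphism $f$ coming from $\mathfrak{s}$-equivalence, lift it to $\tilde f = wfr$ with candidate inverse $rf^{-1}w$, verify commutativity of the square, and then establish the key simplifications $wfr=wf$ and $wfr=fr$ via the weak cokernel/kernel properties of Lemma~\ref{shortExactseq1} together with the idempotent trick of post- or pre-composing with $r$ (resp.\ $w$). The only additions are your explicit remark that $\tilde f,\tilde g$ are morphisms in $\tilde{\mathscr{C}}$ and the closing observation that the special case $f=1_B$ handles independence from the choice of idempotent filling, both of which are harmless elaborations of the paper's argument.
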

From Proposition \ref{independentOfRep} we conclude that $\mathfrak{r}$ is well-defined in the sense of Remark \ref{wellDefn}.
 
\begin{lemma}\label{standForm} Let $\d$ be an extension in $\mathbb{F}((Z,p),(X,q))$ with $\mathfrak{s}(\d)=[ X \overset{x}{\longrightarrow} Y \overset{y}{\longrightarrow} Z]$ and $\mathfrak{r}(\d) = [(X,q) \overset{xq}{\longrightarrow} (Y,r) \overset{py}{\longrightarrow} (Z,p)]$. Suppose that $(X,q) \overset{u}{\longrightarrow} (W,s) \overset{v}{\longrightarrow} (Z,p)$ is another sequence realising $\d$ as an $\mathbb{F}$-extension. Then $u=u_{1}q$ and $v=pv_{1}$ for some $u_{1} \colon X \rightarrow W$ and $v_{1} \colon W \rightarrow Z.$
\begin{proof} Since $(X,q) \overset{u}{\longrightarrow} (W,s) \overset{v}{\longrightarrow} (Z,p)$ realises $\d$, there is an equivalence, 
\begin{center}
\begin{tikzcd}
(X,q) \arrow[r,"xq"] \arrow[d,equal]
& (Y,r) \arrow[r,"py"] \arrow[d,"f"]
& (Z,p) \arrow[d,equal]
\\
(X,q) \arrow[r,"u"]
& (W,s) \arrow[r,"v"]
& (Z,p)
\end{tikzcd}
\end{center}
where $f$ is an isomorphism. Since the above diagram commutes $fxq = u$ and $vf = py$, so set $u_{1} = fx$ and $v_{1} = yf^{-1}$. 
\end{proof}
\end{lemma}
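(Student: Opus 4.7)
The plan is to unwind the definition of ``realises an $\mathbb{F}$-extension'' to obtain an isomorphism in $\tilde{\mathscr{C}}$ between the two realising sequences, and then to read off the desired factorisations $u = u_1 q$ and $v = p v_1$ from this isomorphism. The only subtle point to keep track of is that an isomorphism $f \colon (Y,r) \to (W,s)$ in $\tilde{\mathscr{C}}$ satisfies $f f^{-1} = 1_{(W,s)} = s$ and $f^{-1} f = 1_{(Y,r)} = r$ in $\mathscr{C}$, rather than $f f^{-1} = 1_W$ and $f^{-1} f = 1_Y$.

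First, since both $\mathfrak{r}(\d) = [(X,q) \overset{xq}{\longrightarrow} (Y,r) \overset{py}{\longrightarrow} (Z,p)]$ and the sequence $(X,q) \overset{u}{\longrightarrow} (W,s) \overset{v}{\longrightarrow} (Z,p)$ realise the same $\mathbb{F}$-extension $\d$, they belong to the same equivalence class in the sense of Definition \ref{equivReln} applied to $\tilde{\mathscr{C}}$. This produces an isomorphism $f \colon (Y,r) \to (W,s)$ in $\tilde{\mathscr{C}}$ making the diagram
\begin{center}
\begin{tikzcd}
(X,q) \arrow[r,"xq"] \arrow[d,equal]
& (Y,r) \arrow[r,"py"] \arrow[d,"f"]
& (Z,p) \arrow[d,equal]
\\
(X,q) \arrow[r,"u"]
& (W,s) \arrow[r,"v"]
& (Z,p)
\end{tikzcd}
\end{center}
commute, so in $\mathscr{C}$ we have the two identities $f x q = u$ and $v f = p y$.

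Now set $u_1 \defeq f x \colon X \to W$ and $v_1 \defeq y f^{-1} \colon W \to Z$, viewing $f$ and $f^{-1}$ just as morphisms in $\mathscr{C}$. The first factorisation is immediate: $u_1 q = (f x) q = u$. For the second, use that $v \colon (W,s) \to (Z,p)$ is a morphism in $\tilde{\mathscr{C}}$, so $v s = v$ and $p v = v$; combining this with $f f^{-1} = s$ yields
\[
p v_1 = p y f^{-1} = (v f) f^{-1} = v (f f^{-1}) = v s = v,
\]
as required. This completes the proof. The step that requires the most care is simply ensuring that one remembers the identity of $(W,s)$ in $\tilde{\mathscr{C}}$ is $s$ and not $1_W$, so that the cancellation $f f^{-1} = s$ gets absorbed into the already-present idempotent $s$ coming from $vs = v$.
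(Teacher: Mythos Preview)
Your proof is correct and follows essentially the same approach as the paper: obtain the equivalence isomorphism $f$ and set $u_1 = fx$, $v_1 = yf^{-1}$. You are in fact more careful than the paper in verifying the second factorisation $pv_1 = v$, correctly handling the point that $ff^{-1} = s$ in $\mathscr{C}$ and using $vs = v$; the paper leaves this verification implicit.
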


\begin{proposition}\label{addreal} Let $\mathfrak{r}$ be the correspondence defined above. Then $\mathfrak{r}$ is an additive realisation of $\mathbb{F}.$
\begin{proof}
 Let $\d = p^{*}q_{*}\varepsilon \in \mathbb{F}((Z,p),(X,q))$ and $\d^{\prime} = p^{\prime *}q^{\prime}_{*}\varepsilon^{\prime} \in \mathbb{F}((Z^{\prime},p^{\prime}),(X^{\prime},q^{\prime}))$ be $\mathbb{F}$-extensions with 
 $$\mathfrak{s}(\d) = [X \overset{x}{\longrightarrow} Y \overset{y}{\longrightarrow} Z], $$
 $$\mathfrak{r}(\d) = [(X,q) \overset{xq}{\longrightarrow} (Y,r) \overset{py}{\longrightarrow} (Z,p)]$$ 
 and 
 $$\mathfrak{s}(\d^{\prime}) = [X^{\prime} \overset{x^{\prime}}{\longrightarrow} Y^{\prime} \overset{y^{\prime}}{\longrightarrow} Z^{\prime}],$$
 $$\mathfrak{r}(\d^{\prime}) = [(X^{\prime},q^{\prime}) \overset{x^{\prime}q^{\prime}}{\longrightarrow} (Y^{\prime},r^{\prime}) \overset{p^{\prime}y^{\prime}}{\longrightarrow} (Z^{\prime},p^{\prime})].$$ 
 Suppose that we have a morphism of $\mathbb{F}$-extensions $(a,c) \colon \d \rightarrow \d^{\prime}$ for some $a \in \tilde{\mathscr{C}}((X,q),(X^{\prime},q^{\prime})$ and $c \in \tilde{\mathscr{C}}((Z,p),(Z^{\prime},p^{\prime})$, that is to say $\mathbb{F}((Z,p),a)(\d) = \mathbb{F}(c,(X,q))(\d^{\prime})$. In other words, we have the following diagram in $\tilde{\mathscr{C}}$.

\begin{equation}\label{RealIdem}
\begin{tikzcd}[column sep=3.0em,row sep=3.0em]
(X,q) \arrow[r, "xq"] \arrow[d,"a"]
& (Y,r) \arrow[r,"py"]
& (Z,p) \arrow[d,"c"] \\
(X^{\prime},q^{\prime}) \arrow[r, "x^{\prime}q^{\prime}"]
& (Y^{\prime},r^{\prime}) \arrow[r,"p^{\prime}y^{\prime}"]
& (Z^{\prime},p^{\prime}) 
\end{tikzcd}
\end{equation}

 By definition we have that $\d = p^{*}q_{*}\varepsilon$ and $\d^{\prime} = p^{\prime *}q^{\prime}_{*}\varepsilon^{\prime}$ for some $\varepsilon \in \mathbb{E}(Z,X)$ and some $\varepsilon^{\prime} \in \mathbb{E}(Z^{\prime},X^{\prime})$. Moreover the morphism $a \in \mathscr{C}(X,X^{\prime})$ is such that $aq = q^{\prime}a = a$, likewise for $ c \in \mathscr{C}(Z,Z^{\prime})$ we have that $cp = p^{\prime}c=c$. We also have by definition that $$\mathbb{F}((Z,p),a)(\d) = a_{*}(p^{*}q_{*}\varepsilon) = \mathbb{F}(c,(X,q))(\d^{\prime}) = c^{*}(p^{\prime *}q^{\prime}_{*}\varepsilon^{\prime}).$$
Therefore we have a morphism of $\mathbb{E}$-extensions $(a,c) \colon p^{*}q_{*}\varepsilon \rightarrow p^{\prime *}q^{\prime}_{*}\varepsilon^{\prime}.$ In other words we have the following solid diagram in $\mathscr{C}$.

\begin{center}
\begin{tikzcd}[column sep=3.5em,row sep=3.5em]
X \arrow[r, "x"] \arrow[d,"a"]
& Y \arrow[r,"y"] \arrow[d, dashed, "b"]
& Z \arrow[r, dashed, "p^{*}q_{*}\varepsilon"] \arrow[d,"c"] & \text{} \\
X \arrow[r, "x^{\prime}"]
& Y \arrow[r,"y^{\prime}"]
& Z \arrow[r, dashed, "p^{\prime *}q^{\prime}_{*}\varepsilon^{\prime}"] &\text{}
\end{tikzcd}
\end{center}

Since $\mathfrak{s}$ is a realisation, there exists a morphism $b \colon Y \rightarrow Y^{\prime}$ making the above diagram commute. Recall that by Lemma \ref{IdemFillingMorph}, we have that $rx=xq$, $yr=py,$ $r^{\prime}x^{\prime}=x^{\prime}q^{\prime}$ and $y^{\prime}r^{\prime}=p^{\prime}y^{\prime}$. It then follows that $r^{\prime}br \colon (Y,r) \rightarrow (Y^{\prime},r^{\prime})$ makes diagram (\ref{RealIdem}) commute since, 
$$r^{\prime}brxq =r^{\prime}b(rxq) = r^{\prime}b(xq^2) = r^{\prime}(bx)q = r^{\prime}x^{\prime}(aq)= (r^{\prime}x^{\prime})a = x^{\prime}q^{\prime}a$$ and 
$$p^{\prime}y^{\prime}r^{\prime}br = p^{\prime}(y^{\prime}r^{\prime})br = (p^{\prime} p^{\prime}) y^{\prime} br = p^{\prime} ( y^{\prime} b ) r =  p^{\prime}c(yr)= (p^{\prime}c)py = cpy.$$
 So we conclude that $\mathfrak{r}$ is a realisation of $\mathbb{F}$. 

Now we verify additivity of $\mathfrak{r}$. For any pair $(Z,p), (X,q)$, we have that $0 =p^{*}q_{*}0$ and $$\mathfrak{s}(0) = [X \overset{\big[\begin{smallmatrix}
1_X\\
0
\end{smallmatrix}\big]}{\longrightarrow} X \oplus Z \overset{[\begin{smallmatrix} 0 & 1_Z \end{smallmatrix}]}{\longrightarrow} Z]=0.$$ By definition we have that $$\mathfrak{r}(0) = [(X,q) \overset{\big[\begin{smallmatrix}
q\\
0
\end{smallmatrix}\big]}{\longrightarrow} (X,q) \oplus (Z,p) \overset{[\begin{smallmatrix} 0 & p \end{smallmatrix}]}{\longrightarrow} (Z,p)],$$
since $q = 1_{(X,q)}$ and $p = 1_{(Z,p)}$, we have that, 
$$ \mathfrak{r}(0) = 0.$$

Now take a pair of $\mathbb{F}$-extensions $\d = p^{*}q_{*}\varepsilon \in \mathbb{F}((Z,p),(X,q))$ and $\d^{\prime} = p^{\prime*}q^{\prime}_{*}\varepsilon^{\prime} \in \mathbb{F}((Z^{\prime},p^{\prime}),(X^{\prime},q^{\prime}))$. Since $\mathfrak{s}$ is an additive realisation we have that $$ \mathfrak{s}(p^{*}q_{*}\varepsilon \oplus p^{\prime*}q^{\prime}_{*}\varepsilon^{\prime}) = \mathfrak{s}(p^{*}q_{*}\varepsilon) \oplus \mathfrak{s}(p^{\prime*}q^{\prime}_{*}\varepsilon^{\prime}).$$
As $\mathfrak{s}(p^{*}q_{*}\varepsilon) =  [X \overset{x}{\longrightarrow} Y \overset{y}{\longrightarrow} Z]$ and $\mathfrak{s}(p^{\prime*}q^{\prime}_{*}\varepsilon^{\prime}) =  [X^{\prime} \overset{x^{\prime} }{\longrightarrow} Y^{\prime}  \overset{y^{\prime} }{\longrightarrow} Z^{\prime} ]$, we have that 
$$\mathfrak{s}(p^{*}q_{*}\varepsilon \oplus p^{\prime *}q^{\prime}_{*} \varepsilon^{\prime} ) = [X \oplus X^{\prime} \overset{x \oplus x^{\prime}}{\longrightarrow} Y \oplus Y^{\prime}  \overset{y \oplus y^{\prime}}{\longrightarrow}  Z \oplus Z^{\prime}]$$

By the definition of $\mathfrak{r}$ we have that,
$$\mathfrak{r}(\d) =  [(X,q) \overset{xq}{\longrightarrow} (Y,r) \overset{py}{\longrightarrow} (Z,p)] \text{, }$$

 $$\mathfrak{r}(\d^{\prime}) =  [(X^{\prime},q^{\prime}) \overset{x^{\prime}q^{\prime}}{\longrightarrow} (Y^{\prime},r^{\prime})  \overset{p^{\prime}y^{\prime} }{\longrightarrow} (Z^{\prime},p^{\prime})] \text{ and }$$
 
$$\mathfrak{r}(\d \oplus \d^{\prime}) = \begin{tikzcd}[column sep=4.0em,row sep=4.0em]
[(X \oplus X^{\prime}, q \oplus q^{\prime}) \arrow[r,"(x \oplus x^{\prime})(q \oplus q^{\prime})"]
& (Y \oplus Y^{\prime},r \oplus r^{\prime}) \arrow[r, "(p \oplus p^{\prime})(y \oplus y^{\prime})"]
& (Z \oplus Z^{\prime}, p \oplus p^{\prime})] \end{tikzcd}.$$
We have that $$(x \oplus x^{\prime})(q \oplus q^{\prime}) = \begin{pmatrix}
x & 0\\
0 & x^{\prime}
\end{pmatrix} \begin{pmatrix}
q & 0\\
0 & q^{\prime}
\end{pmatrix} = \begin{pmatrix}
xq & 0\\
0 & x^{\prime}q^{\prime}
\end{pmatrix}=(xq) \oplus (x^{\prime}q^{\prime}),$$ likewise $(p \oplus p^{\prime})(y \oplus y^{\prime}) = py \oplus p^{\prime}y^{\prime}$. It is also easy to check that $r \oplus r^{\prime}$ is idempotent and satisfies the required equations arising from Lemma \ref{IdemFillingMorph}. So it follows that $$\mathfrak{r}(\d \oplus \d^{\prime}) = \mathfrak{r}(\d) \oplus \mathfrak{r}(\d^{\prime}).$$ This completes the proof. 
\end{proof}
\end{proposition}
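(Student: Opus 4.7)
The plan is to verify the two properties separately: first that $\mathfrak{r}$ is a realisation of $\mathbb{F}$ (that morphisms of $\mathbb{F}$-extensions can be filled by a middle morphism in $\tilde{\mathscr{C}}$), and second that $\mathfrak{r}$ is additive (it sends split $\mathbb{F}$-extensions to the zero class and respects direct sums). The key general principle I would exploit throughout is that, by construction, $\mathbb{F}((Z,p),(X,q))$ sits inside $\mathbb{E}(Z,X)$ and morphisms in $\tilde{\mathscr{C}}$ are in particular morphisms in $\mathscr{C}$, so each problem in $\tilde{\mathscr{C}}$ can be pushed down to $\mathscr{C}$ where the existing extriangulated structure does the real work; the only extra care needed is to ensure that the morphisms we produce are genuinely morphisms in $\tilde{\mathscr{C}}$, i.e.\ compatible with the relevant idempotents.

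For the realisation property, suppose $(a,c)\colon \d\to\d'$ is a morphism of $\mathbb{F}$-extensions in $\tilde{\mathscr{C}}$, where $\d = p^{*}q_{*}\varepsilon$ and $\d' = p'^{*}q'_{*}\varepsilon'$. I would first observe that, because $a$ and $c$ satisfy the defining Karoubi identities ($aq=q'a=a$ and $cp=p'c=c$), the identity $\mathbb{F}((Z,p),a)(\d) = \mathbb{F}(c,(X,q))(\d')$ is literally the statement $a_{*}(p^{*}q_{*}\varepsilon) = c^{*}(p'^{*}q'_{*}\varepsilon')$ in $\mathbb{E}$, so $(a,c)$ is a morphism of $\mathbb{E}$-extensions $p^{*}q_{*}\varepsilon \to p'^{*}q'_{*}\varepsilon'$ in $\mathscr{C}$. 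I would then apply (ET2) for $\mathfrak{s}$ in $\mathscr{C}$ to the sequences $X\xrightarrow{x}Y\xrightarrow{y}Z$ and $X'\xrightarrow{x'}Y'\xrightarrow{y'}Z'$ realising these $\mathbb{E}$-extensions to get a middle morphism $b\colon Y\to Y'$. The natural candidate for a filler in $\tilde{\mathscr{C}}$ is $r'br\colon (Y,r)\to(Y',r')$; this is automatically a $\tilde{\mathscr{C}}$-morphism, and using the identities $rx=xq$, $yr=py$, $r'x'=x'q'$, $y'r'=p'y'$ provided by Lemma \ref{IdemFillingMorph}, together with $bx=x'a$, $y'b=cy$, $r^{2}=r$, $r'^{2}=r'$, one checks by direct substitution that $r'br\cdot xq = x'q'\cdot a$ and $p'y'\cdot r'br = c\cdot py$.

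For the additivity of $\mathfrak{r}$, the split case is immediate: the split $\mathbb{F}$-extension is $0 = p^{*}q_{*}0$, and $\mathfrak{s}(0)$ is the split sequence $X \to X\oplus Z \to Z$. Taking $r = q\oplus p$ (which is idempotent and satisfies the Lemma \ref{IdemFillingMorph} relations on the split data), $\mathfrak{r}(0)$ becomes the sequence with maps $\bigl[\begin{smallmatrix}q\\0\end{smallmatrix}\bigr]$ and $[\,0\ p\,]$ on $(X,q)\oplus(Z,p)$; since $q$ and $p$ are identities in $\tilde{\mathscr{C}}$, this is by definition the zero equivalence class. For the direct sum case, additivity of $\mathfrak{s}$ in $\mathscr{C}$ gives the block-diagonal realisation of $\d\oplus\d'$; one then checks that $r\oplus r'$ is an idempotent satisfying the hypotheses of Lemma \ref{IdemFillingMorph} applied to $\d\oplus\d'$, and a short matrix computation produces $(xq)\oplus(x'q')$ and $(py)\oplus(p'y')$ from the relevant compositions.

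The only part that requires genuine care is the realisation step, and within it the choice of filler $r'br$: the obstacle is that the raw $b$ coming from (ET2) need not be a morphism in $\tilde{\mathscr{C}}$ (it need not satisfy $br=b=r'b$), so conjugating by $r$ and $r'$ is essential, and one must then re-verify commutativity of both squares, which is where the Lemma \ref{IdemFillingMorph} compatibility relations are indispensable. Everything else reduces to bookkeeping with the biadditive structure of $\mathbb{E}$ and the already-established well-definedness of $\mathfrak{r}$ from Proposition \ref{independentOfRep}.
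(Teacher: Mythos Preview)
Your proposal is correct and follows essentially the same argument as the paper: push the morphism of $\mathbb{F}$-extensions down to a morphism of $\mathbb{E}$-extensions, use the realisation property of $\mathfrak{s}$ to obtain $b$, take $r'br$ as the filler in $\tilde{\mathscr{C}}$, and verify commutativity via the Lemma~\ref{IdemFillingMorph} relations; the additivity checks (split extension via $q\oplus p$, direct sum via $r\oplus r'$) are likewise identical. Your explicit remark that the raw $b$ need not be a $\tilde{\mathscr{C}}$-morphism and that conjugation is therefore essential is exactly the point the paper is making implicitly.
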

So far we have constructed the triple $(\tilde{\mathscr{C}}, \mathbb{F}, \mathfrak{r)}$. Since $\tilde{\mathscr{C}}$ is the idempotent completion of $\mathscr{C}$, it is an additive category. Propositions \ref{biadd1}, \ref{biadd2} and \ref{addreal} show that the triple $(\tilde{\mathscr{C}}, \mathbb{F}, \mathfrak{r)}$ satisfies axioms (ET1) and (ET2) of the definition of an extriangulated category, see Definition \ref{DefExtriang}. So what is left is to verify axioms (ET3), $\text{(ET3)}^{\text{op}}$, (ET4) and $\text{(ET4)}^{\text{op}}$.

\begin{proposition}\label{propet3} The triple $(\tilde{\mathscr{C}}, \mathbb{F}, \mathfrak{r)}$ satisfies the axioms (ET3) and $\text{(ET3)}^{\text{op}}$.
\begin{proof}
 Let $\d = p^{*}q_{*}\varepsilon \in \mathbb{F}((Z,p),(X,q))$ and $\d^{\prime} = (p^{\prime})^{*}(q^{\prime})_{*}\varepsilon^{\prime} \in \mathbb{F}((Z^{\prime},p^{\prime}),(X^{\prime},q^{\prime}))$ be $\mathbb{F}$-extensions with 
 $$\mathfrak{s}(\d) = [X \overset{x}{\longrightarrow} Y \overset{y}{\longrightarrow} Z],$$
 $$\mathfrak{r}(\d) = [(X,q) \overset{xq}{\longrightarrow} (Y,r) \overset{py}{\longrightarrow} (Z,p)],$$ 
 whereby $rx = xq$ and $py = yr$ by Lemma \ref{IdemFillingMorph}
 and 
 $$\mathfrak{r}(\d^{\prime}) = [X^{\prime} \overset{x^{\prime}}{\longrightarrow} Y^{\prime} \overset{y^{\prime}}{\longrightarrow} Z^{\prime}],$$
 $$\mathfrak{r}(\d^{\prime}) = [(X^{\prime},q^{\prime}) \overset{x^{\prime}q^{\prime}}{\longrightarrow} (Y^{\prime},r^{\prime}) \overset{p^{\prime}y^{\prime}}{\longrightarrow} (Z^{\prime},p^{\prime})],$$
 whereby $r^{\prime}x^{\prime} = x^{\prime}q^{\prime}$ and $p^{\prime}y^{\prime} = y^{\prime}r^{\prime}$ by Lemma \ref{IdemFillingMorph}.
 Suppose we have the following commutative diagram in $\tilde{\mathscr{C}}$. Note that we have that $q^{\prime}a = aq =a$ and $r^{\prime}b = br =b$ by the definition of morphisms in $\tilde{\mathscr{C}}$.
 
\begin{equation}\label{et3diag}
 \begin{tikzcd}[column sep=3.0em,row sep=3.0em]
(X,q) \arrow[r, "xq"] \arrow[d,"a"]
& (Y,r) \arrow[r,"py"] \arrow[d,"b"]
& (Z,p) & \text{} \\
(X^{\prime},q^{\prime}) \arrow[r, "x^{\prime}q^{\prime}"]
& (Y^{\prime},r^{\prime}) \arrow[r,"p^{\prime}y^{\prime}"]
& (Z^{\prime},p^{\prime})  &\text{}
\end{tikzcd}
\end{equation}
We then have the following diagram in $\mathscr{C}$. 
\begin{equation}\label{et3Propdiag2}
\begin{tikzcd}[column sep=3.5em,row sep=3.5em]
X \arrow[r, "x"] \arrow[d,"a"]
& Y \arrow[r,"y"] \arrow[d,"r^{\prime}br"]
& Z \arrow[r, dashed, "p^{*}q_{*}\varepsilon"] \arrow[d, dashed, "c"] & \text{} \\
X^{\prime} \arrow[r, "x^{\prime}"]
& Y^{\prime} \arrow[r,"y^{\prime}"]
& Z^{\prime} \arrow[r, dashed, "p^{\prime *}q^{\prime}_{*}\varepsilon^{\prime}"] &\text{}
\end{tikzcd}
\end{equation}

Using the above relations, we have that $$r^{\prime}brx = r^{\prime}bxq = r^{\prime}x^{\prime}q^{\prime}a = r^{\prime}r^{\prime}x^{\prime}a = r^{\prime}x^{\prime}a = x^{\prime}q^{\prime}a = x^{\prime}a$$
hence the left square of diagram (\ref{et3Propdiag2}) commutes.  Since $\mathscr{C}$ is an extriangulated category, there exists $c \colon Z \rightarrow Z^{\prime}$ such that the diagram commutes and $a_{*}(p^{*}q_{*}\varepsilon) = c^{*}(p^{\prime *}q^{\prime}_{*}\varepsilon^{\prime}).$ 

Consider the morphism $p^{\prime}cp \colon (Z,p) \rightarrow (Z,p^{\prime})$, we have that 
$$ p^{\prime}c(pp)y =p^{\prime}c(py) = p^{\prime}(cy)r = p^{\prime}y^{\prime}r^{\prime}b(rr)=p^{\prime}y^{\prime}(r^{\prime}br)=p^{\prime}y^{\prime}b,$$
 so $p^{\prime}cp$ makes diagram (\ref{et3diag}) commute.

 We also have that, $$(p^{\prime}cp)^{*}(p^{\prime *}q_{*}^{\prime} \varepsilon^{\prime})  = p^{*}c^{*}p^{\prime *}p^{\prime *}q_{*}^{\prime} \varepsilon^{\prime}  = p^{*}c^{*}p^{\prime *}q_{*}^{\prime} \varepsilon^{\prime}  =p^{*}a_{*}p^{*}q_{*}\varepsilon =a_{*}p^{*}p^{*}q_{*}\varepsilon=a_{*}p^{*}q_{*}\varepsilon $$  therefore we have a morphism of $\mathbb{F}$-extensions $(a,p^{\prime}cp) \colon \d \rightarrow \d^{\prime}$, as required. This verifies (ET3). The proof for $\text{(ET3)}^{\text{op}}$ is dual. 
\end{proof}
\end{proposition}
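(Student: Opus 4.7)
The plan is to reduce the verification of (ET3) for $(\tilde{\mathscr{C}}, \mathbb{F}, \mathfrak{r})$ to the already-available axiom (ET3) for $(\mathscr{C}, \mathbb{E}, \mathfrak{s})$, exploiting the concrete description of $\mathfrak{r}$-realisations produced via Lemma \ref{IdemFillingMorph}. Suppose given $\mathbb{F}$-extensions $\d = p^{*}q_{*}\varepsilon$ and $\d' = p'^{*}q'_{*}\varepsilon'$ in standard form, realised by $(X,q) \xrightarrow{xq} (Y,r) \xrightarrow{py} (Z,p)$ and $(X',q') \xrightarrow{x'q'} (Y',r') \xrightarrow{p'y'} (Z',p')$ with $rx=xq$, $yr=py$, $r'x'=x'q'$, $y'r'=p'y'$. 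Given a commutative left square in $\tilde{\mathscr{C}}$ with vertical morphisms $a \colon (X,q)\to(X',q')$ and $b \colon (Y,r)\to(Y',r')$, I would produce the required $c$ in two stages: first a morphism $c \colon Z \to Z'$ in $\mathscr{C}$ via the ambient (ET3), then an idempotent-normalised lift $\tilde c \colon (Z,p)\to(Z',p')$.

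First I would translate the hypothesis to $\mathscr{C}$. The equation $b \circ (xq) = (x'q') \circ a$ in $\tilde{\mathscr{C}}$ reads $bxq = x'q'a$ in $\mathscr{C}$. Since $br = b$ holds by virtue of $b$ being a morphism of $\tilde{\mathscr{C}}$, and $rx = xq$, one computes $bx = b(rx) = b(xq) = x'q'a = x'a$, using $q'a=a$. Thus the left square with vertical maps $a, b$ and rows $X \xrightarrow{x} Y \xrightarrow{y} Z$ and $X' \xrightarrow{x'} Y' \xrightarrow{y'} Z'$ (which realise the underlying $\mathbb{E}$-extensions $p^{*}q_{*}\varepsilon$ and $p'^{*}q'_{*}\varepsilon'$) commutes in $\mathscr{C}$. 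Axiom (ET3) in $\mathscr{C}$ then supplies $c \colon Z \to Z'$ with $cy = y'b$ and $a_{*}(p^{*}q_{*}\varepsilon) = c^{*}(p'^{*}q'_{*}\varepsilon')$.

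Second, I would set $\tilde c := p'cp$. Then $p' \tilde c = \tilde c p = \tilde c$, so $\tilde c$ is a morphism $(Z,p) \to (Z',p')$ of $\tilde{\mathscr{C}}$. For the right square, I compute $\tilde c \circ py = p'c(py) = p'c(yr) = p'(cy)r = p'y'(br) = p'y' \circ b$, using $py=yr$ and $br=b$. For the extension-level compatibility, I unwind $\tilde c^{*} \d' = (p'cp)^{*}(p'^{*}q'_{*}\varepsilon') = p^{*}c^{*}(p'^{*}p'^{*})q'_{*}\varepsilon' = p^{*}c^{*}p'^{*}q'_{*}\varepsilon' = p^{*}(c^{*}\d')$ by $(p')^{2}=p'$, then apply the (ET3)-equation to get $p^{*}(c^{*}\d') = p^{*}a_{*}p^{*}q_{*}\varepsilon = a_{*}p^{*}p^{*}q_{*}\varepsilon = a_{*}\d$, using $p^{2}=p$ and that $a_{*}$ and $p^{*}$ commute as operations on distinct arguments of the bifunctor $\mathbb{E}$.

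The main obstacle I expect is bookkeeping rather than conceptual: the filling idempotent $r$ is not canonical, so all computations should rely only on the universal relations $rx=xq$, $yr=py$ (and their primed analogues), and on the $\tilde{\mathscr{C}}$-morphism identities $aq=q'a=a$, $br=r'b=b$; Proposition \ref{independentOfRep} guarantees that the choice of representative does not matter. The dual axiom $(\text{ET3})^{\text{op}}$ follows by the formally dual construction, obtaining a morphism $X \to X'$ in $\mathscr{C}$ from $(\text{ET3})^{\text{op}}$ there and then normalising it by $k \mapsto q'kq$ to land in $\tilde{\mathscr{C}}$.
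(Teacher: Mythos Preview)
Your proposal is correct and follows essentially the same route as the paper: translate the given square to $\mathscr{C}$, invoke (ET3) there to obtain $c$, then normalise via $\tilde c = p'cp$ and check both the right-square and the extension compatibility. The only cosmetic difference is that the paper uses $r'br$ as the middle vertical map in the $\mathscr{C}$-diagram while you use $b$; since $r'b=b=br$ in $\tilde{\mathscr{C}}$ these are literally the same morphism, so the two arguments coincide.
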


Before we can prove that $\tilde{\mathscr{C}}$ satisfies (ET4) and (ET4)$^\text{op}$. We first need to prove the upcoming statements, which will play an important part in our proof of (ET4) and (ET4)$^{\text{op}}$.

\begin{lemma}\label{rightClosed} Let $\d$ be an extension in $\mathbb{F}((Z,p),(X,q))$ where $\mathfrak{s}(\d)=[ X \overset{x}{\longrightarrow} Y \overset{y}{\longrightarrow} Z]$ and $\mathfrak{r}(\d) = [(X,q) \overset{xq}{\longrightarrow} (Y,r) \overset{py}{\longrightarrow} (Z,p)]$. Then the following sequences of natural transformations are exact. 
\begin{center}
\begin{tikzcd}
 \mathbb{F}(-,(X,q)) \arrow[r, Rightarrow, "\mathbb{F}(-{,}xq)"]
& \mathbb{F}(-,(Y,r)) \arrow[r, Rightarrow, "\mathbb{F}(-{,}py)"]
& \mathbb{F}(-,(Z,p))
\end{tikzcd}
\end{center}

\begin{center}
\begin{tikzcd}
 \mathbb{F}((Z,p),-) \arrow[r, Rightarrow, "\mathbb{F}(py{,}-)"]
& \mathbb{F}((Y,r),-) \arrow[r, Rightarrow, "\mathbb{F}(xq{,}-)"]
& \mathbb{F}((X,q),-)
\end{tikzcd}
\end{center}

\begin{proof}
Let $(A,e)$ be any object in $\tilde{\mathscr{C}}$. We need to show that the following sequence in $Ab$ is exact. 
\begin{center}
\begin{tikzcd}[column sep=3.5em,row sep=3.5em]
 \mathbb{F}((A,e),(X,q)) \arrow[r, "\mathbb{F}((A{,}e){,}xq)"]
& \mathbb{F}((A,e),(Y,r)) \arrow[r, "\mathbb{F}((A{,}e){,}py)"]
& \mathbb{F}((A,e),(Z,p)).
\end{tikzcd}
\end{center}
Take $\theta \in \mathbb{F}((A,e),(Y,r))$. Recall that, by definition, we have that $e^{*}\theta = r_{*}\theta = \theta$.  Suppose that $\mathbb{F}((A,e),py)(\theta) = (py)_{*}\theta = 0$. Recall that, by construction $py = yr$, hence we have that $0 = (py)_{*}\theta = (yr)_{*}\theta = y_{*}(r_{*}\theta)$. In particular we have that $r_{*}\theta = \theta \in \text{ker}(\mathbb{E}(A,y))$. By Proposition \ref{longExact}, the following sequence is exact in $Ab$.
\begin{center}
\begin{tikzcd}
 \mathbb{E}(A,X) \arrow[r,"\mathbb{E}(-{,}x)"]
& \mathbb{E}(A,Y) \arrow[r, "\mathbb{E}(-{,}y)"]
& \mathbb{E}(A,Z)
\end{tikzcd}
\end{center}
Therefore we have that $r_{*}\theta = \theta = x_{*}(\sigma)$ for some $\sigma \in \mathbb{E}(A,X)$. Observe the following, 
$$ \theta = e^{*}\theta = e^{*}x_{*}\sigma = x_{*}(e^{*}\sigma),$$
thus
$$ (xq)_{*}(e^{*}\sigma) =  (rx)_{*}(e^{*}\sigma) = r_{*}( x_{*}e^{*}\sigma) = r_{*}(\theta) = \theta.$$
Therefore we have that $(xq)_{*}(q_{*}e^{*}\sigma) = \theta$. In other words, $\theta \in \text{im}(\mathbb{F}((A,e),xq))$ and in particular, ker$(\mathbb{F}((A,e),py)) \subseteq \text{im}(\mathbb{F}((A,e),xq)).$

Now take $\sigma \in \mathbb{F}((A,e),(X,q))$, then $$(py)_{*}((xq)_{*}\sigma) = (py)_{*}(x_{*}q_{*}\sigma) =p_{*}(yx)_{*}(q_{*}\sigma) = 0$$ since $y \circ x = 0$ by Lemma \ref{weakKernelCokernel}. Hence $\text{im}(\mathbb{F}((A,e),xq)) \subseteq  \text{ker}(\mathbb{F}((A,e),py))$.

The proof of the dual statement is dual. This completes the proof. 
\end{proof}
\end{lemma}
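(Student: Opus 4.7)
The plan is to prove the first sequence is exact for each evaluation at an object $(A,e) \in \tilde{\mathscr{C}}$; the second sequence is handled dually. The guiding idea is to reduce everything to the long exact sequence from Proposition \ref{longExact} applied to the underlying $\mathbb{E}$-triangle $X \overset{x}{\longrightarrow} Y \overset{y}{\longrightarrow} Z$, exploiting the fact from Lemma \ref{Extsubgrp} that each $\mathbb{F}$-extension group sits as a subgroup of the corresponding $\mathbb{E}$-extension group, together with the compatibility relations $rx = xq$ and $yr = py$ arising from the construction of $\mathfrak{r}$ via Lemma \ref{IdemFillingMorph}. These relations are what will let us transfer any information between the morphisms in $\mathscr{C}$ and their idempotent-twisted counterparts in $\tilde{\mathscr{C}}$.

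To see $\mathrm{im}(\mathbb{F}((A,e),xq)) \subseteq \ker(\mathbb{F}((A,e),py))$ is immediate: $y\circ x = 0$ by Lemma \ref{weakKernelCokernel}, so $(py)\circ(xq) = pyxq = 0$, and therefore the composition of the induced maps vanishes. For the reverse inclusion, take $\theta \in \mathbb{F}((A,e),(Y,r))$ with $(py)_{*}\theta = 0$. Since $\theta = e^{*}r_{*}\eta$ for some $\eta \in \mathbb{E}(A,Y)$, idempotency of $e$ and $r$ gives $e^{*}\theta = \theta$ and $r_{*}\theta = \theta$. Using $py = yr$, this yields $y_{*}\theta = y_{*}r_{*}\theta = (py)_{*}\theta = 0$ inside the ambient group $\mathbb{E}(A,Y)$. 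By exactness of the sequence in Proposition \ref{longExact} applied to the $\mathbb{E}$-triangle realising $\d$, there exists $\sigma \in \mathbb{E}(A,X)$ with $x_{*}\sigma = \theta$. The element $q_{*}e^{*}\sigma$ then lies in $\mathbb{F}((A,e),(X,q))$, and using $xq = rx$ one computes $(xq)_{*}(q_{*}e^{*}\sigma) = x_{*}q_{*}e^{*}\sigma = e^{*}r_{*}x_{*}\sigma = e^{*}r_{*}\theta = \theta$, which exhibits $\theta$ in the image as required.

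The dual sequence is established by the same strategy: starting from $\varphi \in \mathbb{F}((Y,r),(A,e))$ with $(xq)^{*}\varphi = 0$ and using $xq = rx$ in place of $yr = py$, one reduces to the contravariant exact sequence of Proposition \ref{longExact}, lifts a preimage in $\mathbb{E}(Z,A)$, and then twists it by the appropriate idempotents to land in $\mathbb{F}((Z,p),(A,e))$.

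The main obstacle I anticipate is not conceptual but bookkeeping: one must consistently verify that preimages constructed in $\mathbb{E}$ really lie in the correct $\mathbb{F}$-subgroups after being sandwiched with idempotents, and that the action of $\mathbb{F}(-,xq)$ on such twisted elements genuinely recovers the original $\theta$. This amounts to repeated applications of $e^{2}=e$, $r^{2}=r$, $q^{2}=q$, $p^{2}=p$, together with the intertwining identities $rx=xq$, $yr=py$, which both sides of the argument rely on symmetrically.
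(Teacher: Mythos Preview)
Your proposal is correct and follows essentially the same approach as the paper: both reduce to the long exact sequence of Proposition~\ref{longExact} for the underlying $\mathbb{E}$-triangle, lift a preimage $\sigma \in \mathbb{E}(A,X)$, and then twist by the idempotents $q$ and $e$ to produce an element of $\mathbb{F}((A,e),(X,q))$ mapping to $\theta$ via the relations $rx=xq$, $yr=py$. The only cosmetic differences are the order in which you handle the two inclusions and that you compute $(xq)_*(q_*e^*\sigma)=\theta$ directly rather than first showing $(xq)_*(e^*\sigma)=\theta$ as the paper does.
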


\begin{remark}
Since $\tilde{\mathscr{C}}$ satisfies (ET3) and (ET3)$^{\text{op}}$ we have that $\tilde{\mathscr{C}}$ satisfies Proposition $\ref{et3Equiv}$. By Lemma \ref{rightClosed}, we see that $\tilde{\mathscr{C}}$ induces long exact sequences as in Proposition \ref{longExact} without requiring that $\tilde{\mathscr{C}}$ is an extriangulated category as a priori.
\end{remark}

\begin{corollary} Let $\d$ be an extension in $\mathbb{F}((Z,p),(X,q))$ where $\mathfrak{s}(\d)=[ X \overset{x}{\longrightarrow} Y \overset{y}{\longrightarrow} Z]$ and $\mathfrak{r}(\d) = [(X,q) \overset{xq}{\longrightarrow} (Y,r) \overset{py}{\longrightarrow} (Z,p)]$. Suppose that $(X,q) \overset{u}{\longrightarrow} (W,s) \overset{v}{\longrightarrow} (Z,p)$ is another sequence realising $\d$ as an $\mathbb{F}$-extension. Then the following sequences of natural transformations are exact.
\begin{center}
\begin{tikzcd}
 \mathbb{F}(-,(X,q)) \arrow[r, Rightarrow, "\mathbb{F}(-{,}u)"]
& \mathbb{F}(-,(W,s)) \arrow[r, Rightarrow, "\mathbb{F}(-{,}v)"]
& \mathbb{F}(-,(Z,p))
\end{tikzcd}
\end{center}

\begin{center}
\begin{tikzcd}
 \mathbb{F}((Z,p),-) \arrow[r, Rightarrow, "\mathbb{F}(v{,}-)"]
& \mathbb{F}((W,s),-) \arrow[r, Rightarrow, "\mathbb{F}(u{,}-)"]
& \mathbb{F}((X,q),-)
\end{tikzcd}
\end{center}

\begin{proof} Since $(X,q) \overset{u}{\longrightarrow} (W,s) \overset{v}{\longrightarrow} (Z,p)$ realises $\d$, there is an equivalence, 
\begin{center}
\begin{tikzcd}
(X,q) \arrow[r,"xq"] \arrow[d,equal]
& (Y,r) \arrow[r,"py"] \arrow[d,"f"]
& (Z,p) \arrow[d,equal]
\\
(X,q) \arrow[r,"u"]
& (W,s) \arrow[r,"v"]
& (Z,p)
\end{tikzcd}
\end{center}
Then, for any object $(A,e) \in \tilde{\mathscr{C}}$ we have the following commutative diagram, where by Lemma \ref{rightClosed}, the top row is exact. 
\begin{center}
\begin{tikzcd}
\mathbb{F}((A{,}e) {,} (X,q)) \arrow[r,"(xq)_{*}"] \arrow[d,equal]
& \mathbb{F}((A{,}e) {,} (Y,r)) \arrow[r,"(py)_{*}"] \arrow[d,"f_{*}"]
& \mathbb{F}((A{,}e) {,} (Z,p)) \arrow[d,equal]
\\
\mathbb{F}((A{,}e) {,} (X,q)) \arrow[r,"u_{*}"]
& \mathbb{F}((A{,}e) {,}(W,s)) \arrow[r,"v_{*}"]
& \mathbb{F}((A{,}e) {,}(Z,p))
\end{tikzcd}
\end{center}
From the above commutative diagram, it is easy to see that the bottom row is also exact.
\end{proof}
\end{corollary}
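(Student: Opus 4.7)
The plan is to reduce this statement to Lemma \ref{rightClosed} by exploiting the fact that $\mathfrak{r}(\d)$ and the alternative realising sequence are equivalent, so that they fit into a commutative ladder whose middle vertical is an isomorphism in $\tilde{\mathscr{C}}$. First, by the definition of a realisation, if $(X,q) \overset{u}{\longrightarrow} (W,s) \overset{v}{\longrightarrow} (Z,p)$ also realises $\d$, then it lies in the same equivalence class as $\mathfrak{r}(\d)$, so there is an isomorphism $f \colon (Y,r) \to (W,s)$ in $\tilde{\mathscr{C}}$ such that $f \circ xq = u$ and $v \circ f = py$.

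Next, I would fix an arbitrary object $(A,e) \in \tilde{\mathscr{C}}$ and apply $\mathbb{F}((A,e),-)$ to the equivalence diagram. Since $\mathbb{F}$ is functorial in the second variable (shown earlier in Proposition \ref{biadd1}) and $f$ is an isomorphism, $f_{*} \colon \mathbb{F}((A,e),(Y,r)) \to \mathbb{F}((A,e),(W,s))$ is an isomorphism of abelian groups. This yields the commutative diagram written down in the statement, in which the top row is exact by Lemma \ref{rightClosed} applied to the canonical realising sequence $\mathfrak{r}(\d)$. A routine diagram chase, using that the outer verticals are identities and the middle vertical is an isomorphism, then transfers exactness from the top row to the bottom row: any $\sigma \in \mathbb{F}((A,e),(W,s))$ with $v_{*}\sigma = 0$ satisfies $(py)_{*}(f^{-1}_{*}\sigma) = 0$, so $f^{-1}_{*}\sigma = (xq)_{*}\tau$ for some $\tau$, whence $\sigma = u_{*}\tau$; conversely $v_{*}u_{*} = (py)_{*}(xq)_{*} = 0$.

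The dual sequence is handled analogously by applying $\mathbb{F}(-,(A,e))$ in place of $\mathbb{F}((A,e),-)$, using the contravariant half of Lemma \ref{rightClosed} and additivity in the first argument (Proposition \ref{biadd2}), so $f^{*}$ is again an isomorphism and the same diagram chase applies. There is no real obstacle here beyond bookkeeping; the content is entirely carried by Lemma \ref{rightClosed} together with the already-established equivalence of any two realising sequences. Essentially this corollary says that the exactness property established in Lemma \ref{rightClosed} is invariant under the choice of representative of an equivalence class of sequences, which is automatic once $\mathbb{F}$ has been shown to be a bifunctor.
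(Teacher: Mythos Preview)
Your proposal is correct and follows essentially the same approach as the paper: both obtain the equivalence isomorphism $f$, apply $\mathbb{F}((A,e),-)$ to get a commutative ladder with exact top row via Lemma \ref{rightClosed}, and transfer exactness along the isomorphism $f_{*}$. If anything, you supply more detail than the paper does, since it simply asserts that exactness of the bottom row is ``easy to see'' and omits the dual case entirely.
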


The following proposition is an analogue of Proposition \ref{mappingCone} in $\tilde{\mathscr{C}}$. Remarkably, we are able to prove the statement of the following proposition without requiring $\tilde{\mathscr{C}}$ to be extriangulated unlike in the statement of Proposition \ref{mappingCone}. We only require that $(\tilde{\mathscr{C}}, \mathbb{F},\mathfrak{r})$ satisfies axioms (ET1) and (ET2). 

\begin{proposition}\label{FMappingCone}
Let $\d = p^{*}q_{*}\varepsilon \in \mathbb{F}((C,p),(A,q))$ be an $\mathbb{F}$-extension where $$\mathfrak{s}(\d)=[ A \overset{x}{\longrightarrow} B \overset{y}{\longrightarrow} C] \text{ and } \mathfrak{r}(\d) = [(A,q) \overset{xq}{\longrightarrow} (B,r) \overset{py}{\longrightarrow} (C,p)].$$
Let $h \colon (E,w) \longrightarrow (C,p)$ be any morphism and suppose
$$\mathfrak{s}(h^{*}\d)=[ A \overset{d}{\longrightarrow} D \overset{e}{\longrightarrow} E] \text{ and } \mathfrak{r}(h^{*}\d) = [(A,q) \overset{dq}{\longrightarrow} (D,s) \overset{we}{\longrightarrow} (E,w)].$$
Then there exists a morphism $g \colon (D,s) \longrightarrow (B,r)$ such that $(1_{(A,q)},h) \colon h^{*}\d \rightarrow \d$ is realised by $(1_{(A,q)}, g, h).$
Moreover $$\mathfrak{r}((dq)_{*}\d) = [(D,s) \overset{\begin{pmatrix}
-es \\ gs
\end{pmatrix}}{\longrightarrow} (E,w) \oplus (B,r) \overset{\begin{pmatrix}
h & py
\end{pmatrix}}{\longrightarrow} (C,p)].$$

\begin{proof}
We apply Proposition \ref{mappingCone} to \begin{tikzcd} 
A \arrow[r, "x"]
& B \arrow[r,"y"]
& C \arrow[r, dashed, "\d"] &\text{}
\end{tikzcd}, the morphism $h \colon E \rightarrow C$ and \begin{tikzcd} 
A \arrow[r, "d"]
& D \arrow[r,"e"]
& E \arrow[r, dashed, "h^{*}\d"] &\text{.}
\end{tikzcd} 
Then there is a morphism $\bar{g} \colon D \rightarrow B$ such that the following diagram commutes
\begin{equation}\label{finalPropDiag1}
\begin{tikzcd} 
A \arrow[r,"d"] \arrow[d,equal]
& D \arrow[r,"e"] \arrow[d,"\bar{g}"]
& E \arrow[r,dashed, "h^{*}\d"] \arrow[d,"h"]
& \text{} 
\\
A \arrow[r, "x"] 
& B \arrow[r,"y"] 
& C \arrow[r, dashed, "\d"] 
&\text{}
\end{tikzcd}
\end{equation}

and that 
\begin{center}
\begin{tikzcd} 
D \arrow[r, "\begin{pmatrix}
-e \\ \bar{g}
\end{pmatrix}"]
& E \oplus B \arrow[r,"\begin{pmatrix}
h \hspace{0.3cm} y
\end{pmatrix}"]
&C \arrow[r, dashed, "d_{*}\d"] &\text{.}
\end{tikzcd}
\end{center}

Since $h \colon (E,w) \longrightarrow (C,p)$ is a morphism in $\tilde{\mathscr{C}}$ we have that $h=hw$, therefore $$h^{*}\d = w^{*}h^{*}\d = w^{*}h^{*}p^{*}q_{*}\varepsilon = w^{*}q_{*}(h^{*}p^{*}\varepsilon).$$ In other words $h^{*}\d \in \mathbb{F}((E,w),(A,q))$, so we have that $$\mathfrak{r}(h^{*}\d) = [(A,q) \overset{dq}{\longrightarrow} (D,s) \overset{we}{\longrightarrow} (E,w)],$$
where $s \colon D \rightarrow D$ is an idempotent morphism such that $dq=sd$ and $we=es$. Consider the following diagram. 
\begin{equation}\label{finalPropDiag2}
\begin{tikzcd} 
(A,q) \arrow[r,"dq"] \arrow[d,equal]
& (D,s) \arrow[r,"we"] \arrow[d,"g=\bar{rgs}"]
& (E,w) \arrow[r,dashed, "h^{*}\d"] \arrow[d,"h"]
& \text{} 
\\
(A,q) \arrow[r, "xq"] 
& (B,r) \arrow[r,"py"] 
& (C,p) \arrow[r, dashed, "\d"] 
&\text{}
\end{tikzcd}
\end{equation}
By diagram (\ref{finalPropDiag1}) and the relations $h=hw=ph$, we can observe that $$(r\bar{g}s)dq = r\bar{g}(dq) = r(\bar{g}d)q = rxq = xq \text{ and}$$
$$py(r\bar{g}s) = (pyr)\bar{g}s = (py)\bar{g}s = p(y\bar{g})s = p(he)s = (ph)es = h(es) = hwe.$$
Therefore diagram (\ref{finalPropDiag2}) commutes and $(1_{(A,q)},h)$ is realised by $(1_{(A,q)}, g, h).$

Since $dq = sd$, we have that $$(dq)_{*}\d = (sd)_{*}p^{*}q_{*}\varepsilon = s_{*}d_{*}p^{*}q_{*}\varepsilon = s_{*}p^{*}(d_{*}q_{*}\varepsilon).$$
That is to say, $(dq)_{*}\d \in \mathbb{F}((C,p),(D,s))$. By definition $(E,w) \oplus (B,r) = (E \oplus B, w \oplus r)$, where $w \oplus r = \begin{pmatrix}
w & 0 \\
0 & r
\end{pmatrix}$, which we observe is an idempotent morphism. Also observe that, 
$$ \begin{pmatrix}
w & 0 \\
0 & r
\end{pmatrix} \begin{pmatrix}
-e\\
g
\end{pmatrix} = \begin{pmatrix}
-we\\
rg
\end{pmatrix} = \begin{pmatrix}
-es\\
gs
\end{pmatrix} = \begin{pmatrix}
-e\\
g
\end{pmatrix}s \text{ and}$$

$$ \begin{pmatrix}
h & y
\end{pmatrix} \begin{pmatrix}
w & 0\\
0 & r
\end{pmatrix} = 
\begin{pmatrix}
hw & yr 
\end{pmatrix} = \begin{pmatrix}
h & py
\end{pmatrix}.$$ 
Therefore $$\mathfrak{r}((dq)_{*}\d) = [(D,s) \overset{\begin{pmatrix}
-es \\ gs
\end{pmatrix}}{\longrightarrow} (E,w) \oplus (B,r) \overset{\begin{pmatrix}
h & py
\end{pmatrix}}{\longrightarrow} (C,p)],$$
as required. 
\end{proof}
\end{proposition}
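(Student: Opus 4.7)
The plan is to lift Proposition \ref{mappingCone} from $\mathscr{C}$ to $\tilde{\mathscr{C}}$. The point is that all of the extension data live in $\mathscr{C}$ via the realisations $\mathfrak{s}(\d)$ and $\mathfrak{s}(h^{*}\d)$, so I can first produce a raw filling morphism $\bar{g}\colon D\to B$ in $\mathscr{C}$ using the dual form of Proposition \ref{mappingCone} applied to the $\mathbb{E}$-triangles $A\overset{x}{\to}B\overset{y}{\to}C$ and $A\overset{d}{\to}D\overset{e}{\to}E$ together with $h\colon E\to C$. That proposition simultaneously yields the $\mathbb{E}$-triangle $D\overset{\left(\begin{smallmatrix}-e\\ \bar{g}\end{smallmatrix}\right)}{\longrightarrow}E\oplus B\overset{(h\;y)}{\longrightarrow}C$ realising $d_{*}\d$, which is exactly the extriangle I want to promote to $\tilde{\mathscr{C}}$.

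The second step is to cut $\bar{g}$ down to a legitimate morphism of $\tilde{\mathscr{C}}$ by setting $g\defeq r\bar{g}s$. Then $rg=g$ and $gs=g$ hold automatically, so $g\in\tilde{\mathscr{C}}((D,s),(B,r))$. To check that $(1_{(A,q)},g,h)$ realises $(1_{(A,q)},h)\colon h^{*}\d\to\d$ I would verify $g\cdot(dq)=xq$ and $(py)\cdot g=h\cdot(we)$ using the defining relations $rx=xq$, $yr=py$, $sd=dq$, $we=es$, together with the commutativity of the diagram from step one and the morphism identities $h=hw=ph$. These are all straightforward manipulations.

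For the ``moreover'' clause I want to apply $\mathfrak{r}$ to $(dq)_{*}\d$. First I note that $(dq)_{*}\d = s_{*}p^{*}(d_{*}q_{*}\varepsilon)$, so $(dq)_{*}\d\in\mathbb{F}((C,p),(D,s))$, confirming that it really is an $\mathbb{F}$-extension of the claimed form. Next, by Proposition \ref{mappingCone} we know that $\mathfrak{s}((dq)_{*}\d)=\bigl[D\to E\oplus B\to C\bigr]$ via the maps above. To evaluate $\mathfrak{r}$ on this I must exhibit an idempotent on $E\oplus B$ that satisfies the hypotheses of Lemma \ref{IdemFillingMorph} with respect to the data $(s,p)$. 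The natural candidate is $w\oplus r=\bigl(\begin{smallmatrix}w&0\\0&r\end{smallmatrix}\bigr)$, which is visibly idempotent, and a direct matrix computation shows $(w\oplus r)\left(\begin{smallmatrix}-e\\ g\end{smallmatrix}\right)=\left(\begin{smallmatrix}-e\\ g\end{smallmatrix}\right)s=\left(\begin{smallmatrix}-es\\ gs\end{smallmatrix}\right)$ and $(h\;py)(w\oplus r)=(h\;py)$. By Proposition \ref{independentOfRep} this choice of idempotent suffices, yielding precisely the claimed formula for $\mathfrak{r}((dq)_{*}\d)$.

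The main subtlety I anticipate is the second step: showing that passing from $\bar{g}$ to the ``reduced'' map $g=r\bar{g}s$ preserves commutativity of the square. The relations coming from Lemma \ref{IdemFillingMorph} give us what we need only after carefully interleaving $r$'s and $p$'s with $\bar{g}$; the key identities $pyr=py$ and $rx=xq=xq^{2}$ have to be chained together in the right order, otherwise one cannot absorb the extra idempotents. By contrast, once $g$ is in hand, the ``moreover'' part is essentially free because the crucial $\mathbb{E}$-triangle in $\mathscr{C}$ was already delivered by the classical Proposition \ref{mappingCone}; the only work left is the idempotent bookkeeping, which the block-diagonal form $w\oplus r$ makes transparent.
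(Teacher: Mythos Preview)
Your proposal is essentially identical to the paper's proof: both apply Proposition~\ref{mappingCone} in $\mathscr{C}$ to obtain $\bar{g}$ and the $\mathbb{E}$-triangle realising $d_{*}\delta$, then set $g=r\bar{g}s$ and verify commutativity of the square in $\tilde{\mathscr{C}}$ using the relations $rx=xq$, $yr=py$, $sd=dq$, $es=we$, $h=hw=ph$, and finally check that $w\oplus r$ satisfies the required idempotent relations to identify $\mathfrak{r}((dq)_{*}\delta)$. The only cosmetic difference is that you explicitly name Proposition~\ref{independentOfRep} to justify the last step, whereas the paper leaves this implicit.
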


\begin{corollary}\label{mappingCorollary}
Let $\varepsilon =  p^{*}q_{*}\sigma \in \mathbb{F}((C,p),(A,q))$ and $\delta = t^{*}q_{*}\theta \in \mathbb{F}((Z,t),(A,q))$ be $\mathbb{F}$-extensions where 
$$\mathfrak{s}(\varepsilon) = [A \overset{a}{\longrightarrow} B \overset{b}{\longrightarrow} C] \text{ and } \mathfrak{r}(\varepsilon) = [(A,q) \overset{aq}{\longrightarrow} (B,r) \overset{pb}{\longrightarrow} (C,p)]$$
and 
$$ \mathfrak{s}(\d) = [A \overset{x}{\longrightarrow} Y \overset{y}{\longrightarrow} Z] \text{ and } \mathfrak{r}(\d) = [(A,q) \overset{xq}{\longrightarrow} (Y,s) \overset{ty}{\longrightarrow} (Z,t)].$$
Suppose we have the following diagram, where the left square commutes i.e. $uaq=xq$. 
\begin{center}
\begin{tikzcd} 
(A,q) \arrow[r, "aq"] \arrow[d,equal]
& (B,r) \arrow[r,"pb"] \arrow[d,"u"]
& (C,p) \arrow[r, dashed, "\varepsilon"] &\text{}
\\
(A,q) \arrow[r, "xq"]
& (Y,s) \arrow[r,"ty"]
& (Z,t) \arrow[r, dashed, "\d"] &\text{}
\end{tikzcd}
\end{center}
Then there exists a morphism $w \colon (C,p) \rightarrow (Z,t)$ in $\tilde{\mathscr{C}}$ such $wpb = tyu, w^{*}\d = \varepsilon$ and that the following is an $\mathbb{F}$-triangle,

\begin{center}
\begin{tikzcd} 
(B,r) \arrow[r, "\begin{pmatrix}
-pb \\ u
\end{pmatrix}"]
& (C,p) \oplus (Y,s) \arrow[r,"\begin{pmatrix}
w \hspace{0.3cm} ty
\end{pmatrix}"]
& (Z,t) \arrow[r, dashed, "(aq)_{*}\d"] &\text{.}
\end{tikzcd}
\end{center}

\begin{proof} This statement is the analogue of the statement of \cite[Proposition 3.5]{herschend2017n} (Corollary \ref{mappingCone2}). The proof of \cite[Proposition 3.5]{herschend2017n} is a consequence of (ET2) and Proposition \ref{mappingCone}, or axioms (R0) and (EA2) respectively in the language of \cite{herschend2017n}. We have shown that $(\tilde{\mathscr{C}},\mathbb{F},\mathfrak{r})$ satisfies (ET2) in Proposition \ref{addreal} and Proposition \ref{FMappingCone} shows that $(\tilde{\mathscr{C}},\mathbb{F},\mathfrak{r})$ satisfies Proposition \ref{mappingCone}. Hence the statement of the corollary follows, by using an argument as in \cite{herschend2017n}.
\end{proof}

\end{corollary}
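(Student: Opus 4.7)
The plan is to mirror in $(\tilde{\mathscr{C}},\mathbb{F},\mathfrak{r})$ the proof of \cite[Proposition 3.5(2)]{herschend2017n} that underlies Corollary \ref{mappingCone2}. That proof requires only (ET2) and the mapping-cone statement Proposition \ref{mappingCone}, whose $\tilde{\mathscr{C}}$-analogues are Proposition \ref{addreal} and Proposition \ref{FMappingCone} respectively, so the adaptation is essentially formal. In particular no appeal to (ET4) is needed, which is convenient as that axiom has not yet been verified for $\tilde{\mathscr{C}}$.

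First, since $(\tilde{\mathscr{C}},\mathbb{F},\mathfrak{r})$ satisfies (ET3) by Proposition \ref{propet3}, I would apply (ET3) to the given commutative left square to obtain a morphism $w_0 \colon (C,p) \to (Z,t)$ with $w_0 \circ pb = ty \circ u$ such that $(1_{(A,q)}, u, w_0) \colon \varepsilon \to \d$ is a morphism of $\mathbb{F}$-triangles, so in particular $w_0^{*}\d = \varepsilon$. Next, Proposition \ref{FMappingCone} applied to $\d$ with $w_0$ in the role of $h$ (using the existing realisation $(A,q) \xrightarrow{aq} (B,r) \xrightarrow{pb} (C,p)$ of $w_0^{*}\d = \varepsilon$) produces a morphism $g \colon (B,r) \to (Y,s)$ together with an $\mathbb{F}$-triangle
\[
(B,r) \xrightarrow{\left(\begin{smallmatrix}-pb\\ g\end{smallmatrix}\right)} (C,p)\oplus(Y,s) \xrightarrow{\begin{pmatrix}w_0 & ty\end{pmatrix}} (Z,t) \dashrightarrow (aq)_{*}\d.
\]

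To finish, I would swap $g$ for the prescribed $u$. Since $g \circ aq = xq = u \circ aq$, the difference $u - g$ vanishes on $aq$, and because $pb$ is a weak cokernel of $aq$ in $\tilde{\mathscr{C}}$ by Lemma \ref{shortExactseq1}, we can write $u - g = \phi \circ pb$ for some $\phi \colon (C,p) \to (Y,s)$. The elementary automorphism $\Phi := \left(\begin{smallmatrix}1_{(C,p)} & 0\\ -\phi & 1_{(Y,s)}\end{smallmatrix}\right)$ of $(C,p)\oplus(Y,s)$ then converts $\left(\begin{smallmatrix}-pb\\ g\end{smallmatrix}\right)$ into $\left(\begin{smallmatrix}-pb\\ u\end{smallmatrix}\right)$ and $\begin{pmatrix}w_0 & ty\end{pmatrix}$ into $\begin{pmatrix}w & ty\end{pmatrix}$, where $w := w_0 + ty\phi$. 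Corollary \ref{closedUnderIso2} applied to the isomorphism $(1, \Phi, 1)$ shows that the resulting sequence is still an $\mathbb{F}$-triangle, with the same underlying extension $(aq)_{*}\d$. A short direct calculation confirms $w \circ pb = tyu$ and $w^{*}\d = \varepsilon$; the latter uses $y^{*}\d = 0$ from Lemma \ref{weakKernelCokernel}.

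The main obstacle is precisely this final substitution: Proposition \ref{FMappingCone} furnishes a fill-in $g$ that need not coincide with the prescribed $u$, so both the middle morphism and the connecting morphism $w_0$ must be corrected via the elementary automorphism above, and one has to confirm that the corrected $w$ retains all the required properties. Everything else is a direct transcription of the argument of \cite{herschend2017n}, using only the ingredients already established for $(\tilde{\mathscr{C}},\mathbb{F},\mathfrak{r})$.
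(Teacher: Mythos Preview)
Your proposal is correct and follows essentially the same route as the paper: both reduce to the argument of \cite[Proposition 3.5]{herschend2017n}, using that $(\tilde{\mathscr{C}},\mathbb{F},\mathfrak{r})$ already satisfies (ET2) via Proposition \ref{addreal} and the mapping-cone property via Proposition \ref{FMappingCone}. The paper simply cites that argument; you have written it out in detail (obtaining a first completion $w_0$, producing the cone with some fill-in $g$, and then correcting $g$ to the prescribed $u$ by an elementary automorphism), which is precisely how the \cite{herschend2017n} proof runs. The only cosmetic difference is that you invoke (ET3) from Proposition \ref{propet3} to manufacture $w_0$, whereas the paper phrases the input as ``(ET2) and Proposition \ref{mappingCone}''; since (ET3) has already been established for $\tilde{\mathscr{C}}$ this is harmless and does not constitute a different approach.
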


So far we have shown that the triple $(\tilde{\mathscr{C}},\mathbb{F},\mathfrak{r})$ satisfies the axioms (ET1), (ET2), (ET3) and (ET3)$^{\text{op}}$. We are now in a position to prove axioms (ET4) and (ET4)$^{\text{op}}$. 

\begin{proposition}\label{propet4}The triple $(\tilde{\mathscr{C}}, \mathbb{F}, \mathfrak{r)}$ satisfies the axioms (ET4) and $\text{(ET4)}^{\text{op}}$.
\begin{proof}
Let $(D,p), (A,q), (F,t) \text{ and } (B,r)$ be objects in $\tilde{\mathscr{C}}$ and let $\d \in \mathbb{F}((D,p),(A,q))$ and $\d^{\prime} \in \mathbb{F}((F,t),(B,r))$ be $\mathbb{F}$-extensions with
 $$\mathfrak{s}(\d) =  [A \overset{f}{\longrightarrow} B \overset{f^{\prime}}{\longrightarrow} D],$$ and 
 $$\mathfrak{s}(\d^{\prime}) = [B \overset{g}{\longrightarrow} C \overset{g^{\prime}}{\longrightarrow} F]$$ 
 in the extriangulated category $(\mathscr{C}, \mathbb{E}, \mathfrak{s})$. 
 Then by definition
$$\mathfrak{r}(\d) =  [(A,q) \overset{fq}{\longrightarrow} (B,r) \overset{pf^{\prime}}{\longrightarrow} (D,p)],$$
for some idempotent $r \colon B \rightarrow B$ where 
\begin{equation}\label{idemEqns1} 
fq=rf  \text{ and }pf^{\prime} = f^{\prime}r 
\end{equation} 
and
$$\mathfrak{r}(\d^{\prime}) = [(B,r) \overset{gr}{\longrightarrow} (C,s) \overset{tg^{\prime}}{\longrightarrow} (F,t)],$$
for some idempotent $s \colon C \rightarrow C$ where 
\begin{equation}\label{idemEqns2}
gr = sg \text{ and } tg^{\prime} = g^{\prime}s.
\end{equation}

We must show that there exists an object $(E,w) \in \tilde{\mathscr{C}}$, an $\mathbb{F}$-extension $\d^{\prime \prime} \in \mathbb{F}((E,w),(A,q))$ such that the following diagram commutes, 
\begin{center}
\begin{tikzcd}
(A,q) \arrow[r, "fq"] \arrow[d, equal]
& (B,r) \arrow[d, "gr"] \arrow[r,"pf^{\prime}"]
& (D,p) \arrow[d,"\bar{d}"] \\
 (A,q) \arrow[r,"hq"]
& (C,s) \arrow[r,"wh^{\prime}"] \arrow[d,"tg^{\prime}"]
&(E,w) \arrow[d,"\bar{e}"] \\
 & (F,t) \arrow[r,equal] & (F,t)
\end{tikzcd}
\end{center}
and that the following compatibilities hold,
\begin{enumerate}[(1)]
\item $\mathfrak{r}((pf^{\prime})_{*}\d^{\prime}) = [(D,p) \overset{\bar{d}}{\longrightarrow} (E,w) \overset{\bar{e}}{\longrightarrow} (F,t)].$

\item $(\bar{d})^{*} \d^{\prime \prime} = \d.$

\item $(fq)_{*}\d^{\prime \prime} = (\bar{e})^{*}\d^{\prime}.$
\end{enumerate}

Since $(\mathscr{C},\mathbb{E},\mathfrak{s})$ is extriangulated we can apply (ET4) to the above $\mathbb{E}$-triangles to get an object $E$ in $\mathscr{C}$, a commutative diagram
\begin{equation}\label{et4proofdiag1}
\begin{tikzcd}
A \arrow[r, "f"] \arrow[d, equal]
& B \arrow[d, "g"] \arrow[r,"f^{\prime}"]
& D \arrow[d,"d"] \\
 A \arrow[r,"h"]
&C \arrow[r,"h^{\prime}"] \arrow[d,"g^{\prime}"]
&E \arrow[d,"e"] \\
 & F \arrow[r,equal] & F
\end{tikzcd}
\end{equation}
in $\mathscr{C}$ and an $\mathbb{E}$-extension $\d^{\prime \prime} \in \mathbb{E}(E,A)$ where $$ \mathfrak{s}(\d^{\prime \prime}) = [A \overset{h}{\longrightarrow} C \overset{h^{\prime}}{\longrightarrow} E]$$ such that the following compatibilities are satisfied:
\begin{enumerate}[(i)]
\item $\mathfrak{s}((f^{\prime})_{*}\d^{\prime}) = [D \overset{d}{\longrightarrow} E \overset{e}{\longrightarrow} F].$

\item $d^{*} \d^{\prime \prime} = \d.$

\item $f_{*}\d^{\prime \prime} = e^{*}\d^{\prime}.$
\end{enumerate}

Recall that since $\d^{\prime} \in \mathbb{F}((F,t),(B,r))$, then by definition $\d^{\prime} = t^{*}r_{*}\varepsilon^{\prime}$ for some $\varepsilon^{\prime} \in  \mathbb{E}(F,B)$. Also recall that $pf^{\prime} = f^{\prime}r$ by (\ref{idemEqns1}), so we have that 
$$ f^{\prime}_{*}\d^{\prime} = f^{\prime}_{*} t^{*}r_{*}\varepsilon^{\prime} = t^{*} f^{\prime}_{*}r_{*}\varepsilon^{\prime} = t^{*}(f^{\prime}r)_{*}\varepsilon^{\prime} = t^{*}(pf^{\prime})_{*}\varepsilon^{\prime} = t^{*}p_{*}(f^{\prime}_{*}\varepsilon^{\prime}).$$
In other words $f^{\prime}_{*}\d^{\prime} \in \mathbb{F}((F,t),(D,p))$ and so we have by definition that 
\begin{equation}\label{pbDeltaPrime}
\mathfrak{r}(f^{\prime}_{*}\d^{\prime}) = [(D,p) \overset{dp}{\longrightarrow} (E,v) \overset{te}{\longrightarrow} (F,t)],
\end{equation}
where $v \colon E \rightarrow E$ is an idempotent such that 
\begin{equation}\label{idemEqns3}
dp = vd \text{ and } te = ev.
\end{equation}

Now consider the element $\d^{\prime \prime} \in \mathbb{E}(E,A)$. We are going to show that $\d^{\prime \prime} \in \mathbb{F}((E,v),(A,q))$. Note that by the compatibility (iii), we have that $f_{*}\d^{\prime \prime} = e^{*}\d^{\prime}$. Recall the relations $fq=rf, te=ev$ and $t^{*}\d^{\prime} = \d^{\prime}$ by (\ref{idemEqns1}) and (\ref{idemEqns3}). We can then observe that 
$$(fq)_{*}\d^{\prime \prime}  = (rf)_{*}\d^{\prime \prime} = r_{*}f_{*}\d^{\prime \prime} = r_{*}e^{*}t^{*}\d^{\prime} = r_{*}(te)^{*}\d^{\prime} = r_{*}(ev)^{*}\d^{\prime} = r_{*}v^{*}(e^{*}\d^{\prime})\in \mathbb{F}((E,v),(B,r)).$$
Consider the $\mathbb{F}$-triangle, 
\begin{center}
\begin{tikzcd}
(A,q) \arrow[r,"fq"]
& (B,r) \arrow[r,"pf^{\prime}"]
& (D,p) \arrow[r,dashed,"\d"]
&\text{,}
\end{tikzcd}
\end{center}
and the following diagram arising from it. 
\begin{center}
\begin{tikzcd}

& \mathbb{F}((E{,}v) {,} (A{,}q)) \arrow[r,"(fq)_{*}"] \arrow[d,hook]
& \mathbb{F}((E{,}v) {,} (B{,}r)) \arrow[r,"(pf^{\prime})_{*}"] \arrow[d,hook]
&\mathbb{F}((E{,}v) {,} (D{,}p)) \arrow[d,hook]
\\
\tilde{\mathscr{C}}((E{,}v) {,} (D{,}p)) \arrow[r,"\d_{\#}"] \arrow[bend left =20, swap]{ur}{\d_{\#}} 
& \mathbb{E}(E,A) \arrow[r,"(fq)_{*}"]
&\mathbb{E}(E,B) \arrow[r,"(pf^{\prime})_{*}"]
&\mathbb{E}(E,D)
\end{tikzcd}
\end{center}
Note that the vertical inclusion maps are due to the fact that the $\mathbb{F}$-extension groups are subgroups of the respective $\mathbb{E}$-extension groups and the diagram commutes. By Lemma \ref{rightClosed}, the top row is exact in $Ab$. Moreover, the sequence obtained by appending the morphism $\d_{\#} \colon \tilde{\mathscr{C}}((E,v),(D,p)) \rightarrow \mathbb{F}((E,v),(A,q))$ to the top row is exact by Proposition \ref{et3Equiv} and Lemma \ref{rightClosed}.

Observe that $(fq)_{*}\d^{\prime \prime} \in \mathbb{F}((E,v),(B,r)$ and $$(pf^{\prime})_{*}((fq)_{*}\d^{\prime \prime} )= p_{*}(f^{\prime} f)_{*}q_{*}\d^{\prime \prime} = p_{*}0_{*}q_{*}\d^{\prime \prime} = 0.$$
That is to say, $(fq)_{*}\d^{\prime \prime}$ is in the kernel of $(pf^{\prime})_{*}$ in the top row. So there exists an $\mathbb{F}$-extension $\sigma \in \mathbb{F}((E,v),(A,q))$ such that $(fq)_{*}\sigma = (fq)_{*}\d^{\prime \prime}$, so $(fq)_{*}(\sigma - \d^{\prime \prime}) = 0$. That is to say $\sigma - \d^{\prime \prime}$ is in the kernel of $(fq)_{*}$ in the top row, therefore there exists $k \in \tilde{\mathscr{C}}((E,v),(D,p))$ such that $\d_{\#}(k) = \sigma - \d^{\prime \prime}$. In other words $\d^{\prime \prime} =  \sigma - \d_{\#}(k) \in \mathbb{F}((E,v),(A,q))$ as required.

Consider the solid part of the following diagram.
\begin{equation}\label{idemFillET4}
\begin{tikzcd}
A \arrow[r,"h"] \arrow[d,"q"]
& C \arrow[r,"h^{\prime}"] \arrow[d,"s"]
& E \arrow[r,dashed,"\d^{\prime \prime}"] \arrow[d,dashed,"w"]
& \text{}
\\
A \arrow[r,"h"]
& C \arrow[r,"h^{\prime}"]
& E \arrow[r,dashed,"\d^{\prime \prime}"]
& \text{}
\end{tikzcd}
\end{equation}
By (\ref{idemEqns2}) we have that, $$sh = s(gf) = (sg)f = g(rf) =(gf)q = hq,$$
therefore the solid square commutes, so by axiom (ET3) for $\mathscr{C}$, there exists a morphism $u \colon E \rightarrow E$ such that $(q,u) \colon \d^{\prime \prime} \rightarrow \d^{\prime \prime}$ is a morphism of $\mathbb{E}$-extensions realised by $(q,s,u)$. Since $q$ and $s$ are idempotent, it follows from Lemma \ref{IdemFillingMorph2} that there exists an idempotent morphism $w \colon E \rightarrow E$ such that $(q,w) \colon \d^{\prime \prime} \rightarrow \d^{\prime \prime}$ is a morphism of $\mathbb{E}$-extensions and diagram (\ref{idemFillET4}) commutes. As $$\d^{\prime \prime} = q_{*}\d^{\prime \prime} = w^{*}\d^{\prime \prime} = w^{*}q_{*}\d^{\prime \prime},$$ we have that $\d^{\prime \prime} \in \mathbb{F}((E,w),(A,q))$, and since $s \colon C \rightarrow C$ is an idempotent such that $$sh = hq \text{ and } wh^{\prime} = h^{\prime} s,$$
and furthermore 
$$ \mathfrak{s}(\d^{\prime \prime}) = [A \overset{h}{\longrightarrow} C \overset{h^{\prime}}{\longrightarrow} E],$$
we have that 
$$\mathfrak{r}(\d^{\prime \prime}) =  [(A,q) \overset{hq}{\longrightarrow} (C,s) \overset{wh^{\prime}}{\longrightarrow} (E,w)].$$
Applying Corollary \ref{mappingCorollary} to the following solid commutative diagram,
\begin{equation}\label{topRow}
\begin{tikzcd}
(A,q) \arrow[r,"fq"] \arrow[d,equal]
& (B,r) \arrow[r,"pf^{\prime}"] \arrow[d,"gr"]
& (D,p) \arrow[d, dashed, "\bar{d}"] \arrow[r,dashed, "\d"]
& \text{}
\\
(A,q) \arrow[r,"hq"] 
& (C,s) \arrow[r,"wh^{\prime}"]
& (E,w) \arrow[r,dashed, "\d^{\prime \prime}"]
& \text{}
\end{tikzcd}
\end{equation}
we get a morphism $\bar{d} \colon (D,p) \rightarrow (E,w)$ such that $\bar{d} \circ pf^{\prime} = wh^{\prime} \circ gr, \bar{d}^{*}\d^{\prime \prime} = \d^{\prime} \text{ and that}$

\begin{center}
\begin{tikzcd} 
(B,r) \arrow[r, "\begin{pmatrix}
-pf^{\prime} \\ gr
\end{pmatrix}"]
& (D,p) \oplus (C,s) \arrow[r,"\begin{pmatrix}
\bar{d} \hspace{0.3cm} wh^{\prime}
\end{pmatrix}"]
& (E,w) \arrow[r, dashed, "(fq)_{*}\d^{\prime \prime}"] &\text{,}
\end{tikzcd}
\end{center}
is an $\mathbb{F}$-triangle.

Applying Corollary \ref{mappingCorollary} to the following solid commutative diagram,
\begin{equation}\label{botRow}
\begin{tikzcd}[column sep=3.5em,row sep=3.5em]
(B,r) \arrow[r, "\begin{pmatrix}
-pf^{\prime} \\ gr
\end{pmatrix}"] \arrow[d,equal]
& (D,p) \oplus (C,s) \arrow[r,"\begin{pmatrix}
\bar{d} \hspace{0.3cm} wh^{\prime}
\end{pmatrix}"] \arrow[d,"\begin{pmatrix}
0 \hspace{0.3cm} 1
\end{pmatrix}"]
& (E,w) \arrow[r, dashed, "(fq)_{*}\d^{\prime \prime}"] \arrow[d, dashed,"\bar{e}"] &\text{}
\\
(B,r) \arrow[r, "gr"]
& (C,s) \arrow[r,"tg^{\prime}"]
& (F,t) \arrow[r, dashed, "\d^{\prime}"] &\text{}
\end{tikzcd}
\end{equation}
we get a morphism $\bar{e} \colon (E,w) \rightarrow (F,t)$ such that $\bar{e} \circ \begin{pmatrix}
\bar{d} \hspace{0.3cm} wh^{\prime}
\end{pmatrix} = tg^{\prime} \circ \begin{pmatrix}
0 \hspace{0.3cm} 1
\end{pmatrix} , \bar{e}^{*}\d^{\prime} = (fq)_{*}\d^{\prime \prime} \text{ and that,}$

\begin{center}
\begin{tikzcd}[column sep=3.5em,row sep=3.5em]
(D,p) \oplus (C,s) \arrow[r, "\begin{pmatrix}
-\bar{d} \hspace{0.3cm} -wh^{\prime} \\ 
0 \hspace{0.4cm} 1
\end{pmatrix}"]
& (E,w) \oplus (C,s) \arrow[r,"\begin{pmatrix}
\bar{e} \hspace{0.3cm} tg^{\prime}
\end{pmatrix}"]
& (F,t) \arrow[r, dashed, "\begin{pmatrix}
-pf^{\prime} \\ gr
\end{pmatrix}_{*}\d^{\prime}"] &\text{,}
\end{tikzcd}
\end{center}
is an $\mathbb{F}$-triangle.

By Proposition \ref{dirSum} applied to the above $\mathbb{F}$-triangle, the following is an $\mathbb{F}$-triangle. 
\begin{center}
\begin{tikzcd}
(D,p) \arrow[r,"-\bar{d}"] 
& (E,w) \arrow[r,"\bar{e}"]
& (F,t) \arrow[r,dashed,"(-pf^{\prime})_{*}\d^{\prime}"]
& \text{}
\end{tikzcd}
\end{center}
This $\mathbb{F}$-triangle is isomorphic to
\begin{equation}\label{lastFTriangle}
\begin{tikzcd}
(D,p) \arrow[r,"\bar{d}"] 
& (E,w) \arrow[r,"\bar{e}"]
& (F,t) \arrow[r,dashed,"(pf^{\prime})_{*}\d^{\prime}"]
& \text{,}
\end{tikzcd}
\end{equation}
Hence, by Corollary \ref{closedUnderIso2}, we have that (\ref{lastFTriangle}) is an $\mathbb{F}$-triangle, so
$$\mathfrak{r}((pf^{\prime})_{*}\d^{\prime}) = [(D,p) \overset{\bar{d}}{\longrightarrow} (E,w) \overset{\bar{e}}{\longrightarrow} (F,t) ].$$

Now consider  the following diagram.
\begin{equation}\label{et4proofdiag2}
\begin{tikzcd}
(A,q) \arrow[r, "fq"] \arrow[d, equal]
& (B,r) \arrow[d, "gr"] \arrow[r,"pf^{\prime}"]
& (D,p) \arrow[d,"\bar{d}"] \\
 (A,q) \arrow[r,"hq"]
& (C,s) \arrow[r,"wh^{\prime}"] \arrow[d,"tg^{\prime}"]
&(E,w) \arrow[d,"\bar{e}"] \\
 & (F,t) \arrow[r,equal] & (F,t)
\end{tikzcd}
\end{equation}
Using the relations arising from the commutative diagram (\ref{topRow}), we have that the top squares of the above diagram commute and that $\bar{d}^{*}\d^{\prime \prime } = \d$. From the relations arising from the commutative diagram (\ref{botRow}) we have that the bottom right square of the above diagram commutes and that $(fq)_{*}\d^{\prime \prime} = \bar{e}^{*}\d^{\prime}.$ 

To conclude we have shown that there exists an object $(E,w) \in \tilde{\mathscr{C}}$, an $\mathbb{F}$-extension $\d^{\prime \prime} \in \mathbb{F}((E,w),(A,q))$ such that the following diagram commutes, 
\begin{center}
\begin{tikzcd}
(A,q) \arrow[r, "fq"] \arrow[d, equal]
& (B,r) \arrow[d, "gr"] \arrow[r,"pf^{\prime}"]
& (D,p) \arrow[d,"\bar{d}"] \\
 (A,q) \arrow[r,"hq"]
& (C,s) \arrow[r,"wh^{\prime}"] \arrow[d,"tg^{\prime}"]
&(E,w) \arrow[d,"\bar{e}"] \\
 & (F,t) \arrow[r,equal] & (F,t)
\end{tikzcd}
\end{center}
and that the following compatibilities hold.
\begin{enumerate}[(1)]
\item $\mathfrak{r}((pf^{\prime})_{*}\d^{\prime}) = [(D,p) \overset{\bar{d}}{\longrightarrow} (E,w) \overset{\bar{e}}{\longrightarrow} (F,t)].$

\item $(\bar{d})^{*} \d^{\prime \prime} = \d.$

\item $(fq)_{*}\d^{\prime \prime} = (\bar{e})^{*}\d^{\prime}.$
\end{enumerate}

This completes the proof of (ET4). The proof of (ET4)$^{\text{op}}$ is dual. 
\end{proof}
\end{proposition}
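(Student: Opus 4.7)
The plan is to lift all data from $\tilde{\mathscr{C}}$ to $\mathscr{C}$, apply (ET4) in the underlying extriangulated category $(\mathscr{C},\mathbb{E},\mathfrak{s})$, and then interpret the output back inside $\tilde{\mathscr{C}}$. Concretely, given $\d \in \mathbb{F}((D,p),(A,q))$ and $\d' \in \mathbb{F}((F,t),(B,r))$ with underlying $\mathfrak{s}$-conflations $A \xrightarrow{f} B \xrightarrow{f'} D$ and $B \xrightarrow{g} C \xrightarrow{g'} F$, I would invoke (ET4) in $\mathscr{C}$ to produce an object $E$, morphisms $h,h',d,e$ fitting into the $3\times 3$ diagram, and an $\mathbb{E}$-extension $\d'' \in \mathbb{E}(E,A)$ realised by $A \xrightarrow{h} C \xrightarrow{h'} E$ with the three compatibilities $\mathfrak{s}(f^{\prime}_{*}\d')=[D\xrightarrow{d}E\xrightarrow{e}F]$, $d^{*}\d''=\d$ and $f_{*}\d''=e^{*}\d'$. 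The task then splits into: (a) pin down the correct idempotent $w\colon E\to E$ making $(E,w)$ the desired object of $\tilde{\mathscr{C}}$, (b) verify that $\d''$ actually lands in $\mathbb{F}((E,w),(A,q))$, and (c) build the morphisms $\bar{d}$ and $\bar{e}$ completing the $\tilde{\mathscr{C}}$-diagram.

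For the candidate idempotent $v$ on $E$, observe that $f^{\prime}_{*}\d' = t^{*}p_{*}(f^{\prime}_{*}\varepsilon') \in \mathbb{F}((F,t),(D,p))$ using $pf'=f'r$, so $\mathfrak{r}(f^{\prime}_{*}\d')$ produces an idempotent $v\colon E\to E$ with $dp=vd$ and $te=ev$. To show $\d'' \in \mathbb{F}((E,v),(A,q))$, I would combine compatibility (iii) with $fq=rf$ and $te=ev$ to get $(fq)_{*}\d'' \in \mathbb{F}((E,v),(B,r))$, and then invoke the long exact sequence induced by $\mathfrak{r}(\d)$ (exactness in the first three terms comes from Lemma \ref{rightClosed}, and the connecting morphism $\d_{\#}$ is available by Proposition \ref{et3Equiv} applied to the already-verified (ET3)/(ET3)$^{\text{op}}$). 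Comparing the top row $\mathbb{F}((E,v),(A,q)) \to \mathbb{F}((E,v),(B,r)) \to \mathbb{F}((E,v),(D,p))$ with its ambient sequence in $\mathbb{E}$-groups, I can write $\d'' = \sigma - \d_{\#}(k)$ with $\sigma\in \mathbb{F}((E,v),(A,q))$ and $k\in \tilde{\mathscr{C}}((E,v),(D,p))$, both terms belonging to $\mathbb{F}((E,v),(A,q))$. Next, the morphism pair $(q,s)\colon \d''\to \d''$ (where $s$ is the idempotent of $\mathfrak{r}(\d')$) commutes with $h$ since $sh=sgf=grf=hq$, so (ET3) in $\mathscr{C}$ furnishes some filler, and Lemma \ref{IdemFillingMorph2} upgrades it to an idempotent $w\colon E\to E$ realising $(q,s,w)\colon \d''\to\d''$. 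This gives the desired $(E,w)$, relations $sh=hq$ and $wh'=h's$, and the refined assertion $\d''\in \mathbb{F}((E,w),(A,q))$ with $\mathfrak{r}(\d'') = [(A,q)\xrightarrow{hq}(C,s)\xrightarrow{wh'}(E,w)]$.

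For the remaining structure, I would apply Corollary \ref{mappingCorollary} twice inside $\tilde{\mathscr{C}}$. The first application, to $\mathfrak{r}(\d)$ mapping into $\mathfrak{r}(\d'')$ via the identity on $(A,q)$ and $gr$ in the middle, supplies $\bar{d}\colon(D,p)\to(E,w)$ with $\bar{d}\circ pf' = wh'\circ gr$ and $\bar{d}^{*}\d''=\d$, together with an $\mathbb{F}$-triangle $(B,r)\to(D,p)\oplus(C,s)\to(E,w)$ realising $(fq)_{*}\d''$. A second application to this triangle, mapping into $\mathfrak{r}(\d')$ via $(0,1)$ in the middle, yields $\bar{e}\colon(E,w)\to(F,t)$ with $\bar{e}^{*}\d'=(fq)_{*}\d''$ and an $\mathbb{F}$-triangle on $(D,p)\oplus(C,s)\to(E,w)\oplus(C,s)\to(F,t)$. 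Proposition \ref{dirSum} extracts from it the $\mathbb{F}$-triangle $(D,p)\xrightarrow{-\bar{d}}(E,w)\xrightarrow{\bar{e}}(F,t)$ realising $(-pf')_{*}\d'$, and Corollary \ref{closedUnderIso2} identifies this with $\mathfrak{r}((pf')_{*}\d')=[(D,p)\xrightarrow{\bar{d}}(E,w)\xrightarrow{\bar{e}}(F,t)]$. Reading off compatibilities (1)--(3) from the constructions is then immediate.

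The main obstacle is the combined step of (i) confining the middle extension $\d''$ to a specific subgroup $\mathbb{F}((E,v),(A,q))$ via the long exact sequence argument and (ii) reconciling the idempotent $v$ (which is dictated by the realisation of $f^{\prime}_{*}\d'$) with a second idempotent $w$ on $E$ that simultaneously witnesses the realisation of $\d''$ in the sense of Lemma \ref{IdemFillingMorph}; Lemma \ref{IdemFillingMorph2} is exactly what keeps this delicate balancing act from collapsing. Once $(E,w)$ is in hand, the rest is a clean application of the mapping cone tools built up for $(\tilde{\mathscr{C}},\mathbb{F},\mathfrak{r})$. The axiom (ET4)$^{\text{op}}$ is proved by the dual argument.
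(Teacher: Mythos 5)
Your proposal is correct and follows essentially the same route as the paper's own proof: lifting to $\mathscr{C}$ and applying (ET4) there, extracting the idempotent $v$ from $\mathfrak{r}(f^{\prime}_{*}\d^{\prime})$, confining $\d^{\prime\prime}$ to $\mathbb{F}((E,v),(A,q))$ via the long exact sequence from Lemma \ref{rightClosed} and Proposition \ref{et3Equiv}, upgrading to the idempotent $w$ via Lemma \ref{IdemFillingMorph2}, and then constructing $\bar{d}$ and $\bar{e}$ with two applications of Corollary \ref{mappingCorollary} followed by Proposition \ref{dirSum} and Corollary \ref{closedUnderIso2}. All the key steps and supporting lemmas match the paper's argument.
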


Having shown that $(\tilde{\mathscr{C}},\mathbb{E},\mathfrak{r})$ satisfies (ET4) and (ET4)$^{\text{op}}$, we can now conclude that $(\tilde{\mathscr{C}},\mathbb{E},\mathfrak{r})$ is an extriangulated category. Recall that there is a fully faithful additive functor $i_{\mathscr{C}} \colon \mathscr{C} \rightarrow \tilde{\mathscr{C}}$ defined as follows. For an object $A$ of $\mathscr{C}$, we have that $i_{\mathscr{C}}(A) = (A,1_A)$ and for a morphism $f$ in $\mathscr{C}$, we have that $i_{\mathscr{C}}(f)=f$. We will show that this functor is an extriangulated functor in the sense of \cite[Definition 2.31]{BTShah}. In particular, the functor $i_{\mathscr{C}}$ preserves the extriangulated structure of $\mathscr{C}$.

\begin{proposition}\label{iExtFunct} Let $\mathscr{C}$ be an extriangulated category and $\tilde{\mathscr{C}}$ be its idempotent completion. Then the functor $i_{\mathscr{C}} \colon \mathscr{C} \rightarrow \tilde{\mathscr{C}}$ is an extriangulated functor. 
\begin{proof}
It is easy to see that the functor $i_{\mathscr{C}}$ is a covariant additive functor. So all that is left is to define a natural transformation $$\Gamma = \{\Gamma_{(C,A)}\}_{(C,A) \in \mathscr{C}^{\text{op}} \times \mathscr{C}} \colon \mathbb{E} \Rightarrow \mathbb{F}(i_{\mathscr{C}}^{\text{op}}-,i_{\mathscr{C}}-)$$ 
of functors $\mathscr{C}^{\text{op}} \times \mathscr{C} \rightarrow Ab$, such that for any $\mathbb{E}$-extension $\d$ if $$\mathfrak{s}(\d) = [X \overset{x}{\longrightarrow} Y \overset{y}{\longrightarrow} Z]$$ then $$\mathfrak{r}(\Gamma_{(Z,X)}(\d)) = [i_\mathscr{C}(A) \overset{i_{\mathscr{C}}(x)}{\longrightarrow} i_\mathscr{C}(B) \overset{i_\mathscr{C}(y)}{\longrightarrow} i_\mathscr{C}(C)].$$
First note that by definition, $\mathbb{F}((C,1),(A,1)) = \mathbb{E}(C,A).$ So given a pair of objects $A,C$ in $\mathscr{C}$ we define $\Gamma_{(C,A)} \colon \mathbb{E}(C,A) \rightarrow \mathbb{F}((C,1),(A,1))$ by setting $\Gamma_{(C,A)}(\d) = \d$ for all $\d \in \mathbb{E}(C,A)$.
Given a morphism $(f,g) \colon (C,A) \rightarrow (Z,X)$ in $\mathscr{C}^{\text{op}} \times \mathscr{C}$, consider the following diagram:
\begin{center}
\begin{tikzcd}[column sep=3.5em,row sep=3.5em]
\mathbb{E}(C,A) \arrow[r, " \Gamma_{(C,A)}"] \arrow[d, " \mathbb{E}(f{,}g)" left]
& \mathbb{F}((C,1),(A,1)) \arrow[d, "\mathbb{F}(f{,}g)"] \\
\mathbb{E}(X,Z) \arrow[r, " \Gamma_{(Z,X)}"] 
& \mathbb{F}((X,1),(Z,1)).
\end{tikzcd}
\end{center}
For $\d \in \mathbb{E}(C,A)$, we have that $\mathbb{E}(f,g)(\d) = f^{*}g_{*}\d$. Therefore $\Gamma_{(Z,X)}(\mathbb{E}(f,g)(\d))=f^{*}g_{*}\d.$ On the other hand, $\Gamma_{(C,A)}(\d) =\d$ and $\mathbb{F}(f,g)(\d)=f^{*}g_{*}\d$. So the diagram commutes. 

Let $A,C$ be an objects in $\mathscr{C}$ and $\d$ be any extension in $\mathbb{E}(C,A)$. Then by definition, we have that $\Gamma_{(C,A)}(\d) = \d \in \mathbb{F}((C,1),(A,1))$. Suppose 
$$ \mathfrak{s}(\d) = [A \overset{x}{\longrightarrow} B \overset{y}{\longrightarrow} C].$$  
Then
\begin{align*}
\mathfrak{r}(\Gamma_{(C,A)}(\d)) &= [(A,1)  \overset{x}{\longrightarrow} (B,1) \overset{y}{\longrightarrow} (C,1)] \\
&= [i_\mathscr{C}(A) \overset{i_{\mathscr{C}}(x)}{\longrightarrow} i_\mathscr{C}(B) \overset{i_\mathscr{C}(y)}{\longrightarrow} i_\mathscr{C}(C)].
\end{align*}
So we conclude that $i_{\mathscr{C}} \colon \mathscr{C} \rightarrow \tilde{\mathscr{C}}$ is in fact an extriangulated functor as required.
\end{proof}
\end{proposition}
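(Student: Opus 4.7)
The task is to exhibit $i_{\mathscr{C}}$ as an extriangulated functor in the sense of \cite[Definition 2.31]{BTShah}. Since $i_{\mathscr{C}}$ is already known to be a (covariant) additive functor by Proposition \ref{karoubian}, the plan has two parts: construct a natural transformation $\Gamma \colon \mathbb{E} \Rightarrow \mathbb{F}(i_{\mathscr{C}}^{\text{op}}-,i_{\mathscr{C}}-)$, and check that it intertwines the realisations $\mathfrak{s}$ and $\mathfrak{r}$.

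The key observation is that for any objects $A,C \in \mathscr{C}$ one has, by the very definition of $\mathbb{F}$,
\[
\mathbb{F}(i_{\mathscr{C}}(C),i_{\mathscr{C}}(A)) \;=\; \mathbb{F}((C,1_C),(A,1_A)) \;=\; 1_C^{*}(1_A)_{*}\mathbb{E}(C,A) \;=\; \mathbb{E}(C,A).
\]
Thus I would define $\Gamma_{(C,A)} \colon \mathbb{E}(C,A) \to \mathbb{F}((C,1_C),(A,1_A))$ to be the identity map. Naturality in the pair $(C,A)$ is then immediate: for any $(f^{\text{op}},g) \colon (C,A) \to (Z,X)$ in $\mathscr{C}^{\text{op}} \times \mathscr{C}$, both composites in the relevant square send $\d \in \mathbb{E}(C,A)$ to $f^{*}g_{*}\d$, using only the definition of $\mathbb{F}$ on morphisms given just before Proposition \ref{biadd1}. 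Biadditivity of $\Gamma$ is automatic since each component is the identity homomorphism.

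For the compatibility with realisations, take $\d \in \mathbb{E}(C,A)$ with $\mathfrak{s}(\d) = [A \xrightarrow{x} B \xrightarrow{y} C]$. Then $\Gamma_{(C,A)}(\d) = \d$ viewed in $\mathbb{F}((C,1_C),(A,1_A))$, and I need to show
\[
\mathfrak{r}(\Gamma_{(C,A)}(\d)) \;=\; [(A,1_A) \xrightarrow{x} (B,1_B) \xrightarrow{y} (C,1_C)].
\]
Unwinding the definition of $\mathfrak{r}$, one must produce an idempotent $r \colon B \to B$ with $rx = x\cdot 1_A = x$ and $y r = 1_C \cdot y = y$; the choice $r = 1_B$ works, so by Proposition \ref{independentOfRep} (well-definedness of $\mathfrak{r}$) we obtain exactly the equivalence class above, which is the image under $i_{\mathscr{C}}$ of $\mathfrak{s}(\d)$.

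I do not anticipate a genuine obstacle here: the statement is essentially a bookkeeping verification, and the only subtlety is invoking Proposition \ref{independentOfRep} to justify that $r = 1_B$ is a legitimate choice of idempotent filler from Lemma \ref{IdemFillingMorph}. Once that is in hand, each of the three requirements (naturality, biadditivity, and compatibility with $\mathfrak{s}$ and $\mathfrak{r}$) reduces to a one-line computation.
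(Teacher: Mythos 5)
Your proposal is correct and follows essentially the same route as the paper: define $\Gamma_{(C,A)}$ to be the identity on $\mathbb{E}(C,A) = \mathbb{F}((C,1_C),(A,1_A))$, check naturality by the one-line computation with $f^{*}g_{*}\d$, and observe that the realisation condition holds with $i_{\mathscr{C}}$ applied to $\mathfrak{s}(\d)$. Your explicit remark that $r = 1_B$ is a legitimate idempotent filler and that Proposition \ref{independentOfRep} guarantees independence of this choice is a small point the paper leaves implicit, but it is the same argument.
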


\begin{theorem}\label{IdempCompExtri}
Let $(\mathscr{C}, \mathbb{E}, \mathfrak{s})$ be an extriangulated category. Let $\tilde{\mathscr{C}}$ be the idempotent completion of $\mathscr{C}$. Then $\tilde{\mathscr{C}}$ is extriangulated. Moreover, the embedding $i_{\mathscr{C}} \colon \mathscr{C} \rightarrow \tilde{\mathscr{C}}$ is an extriangulated functor.
\begin{proof}
This follows from the following Propositions \ref{biadd1}, \ref{biadd2}, \ref{addreal}, \ref{propet3}, \ref{propet4} and \ref{iExtFunct}.
\end{proof}
\end{theorem}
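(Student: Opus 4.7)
The plan is to verify each axiom of Definition \ref{DefExtriang} for the triple $(\tilde{\mathscr{C}}, \mathbb{F}, \mathfrak{r})$ by assembling the sequence of propositions already proved in this section, and then to separately treat the claim about $i_{\mathscr{C}}$. The core idea is that $\mathbb{F}((X,p),(Y,q)) = p^{*}q_{*}\mathbb{E}(X,Y)$ and $\mathfrak{r}$ is the ``squeezed'' version of $\mathfrak{s}$ obtained by conjugating $\mathfrak{s}(\d)$ with the idempotents $p,q$ and an intermediate idempotent produced by Lemma \ref{IdemFillingMorph}; everything reduces to transferring the extriangulated structure of $\mathscr{C}$ across these idempotents.

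First I would dispatch (ET1): biadditivity of $\mathbb{F}$ in each variable is Propositions \ref{biadd1} and \ref{biadd2}, after noting via Lemma \ref{Extsubgrp} that the values of $\mathbb{F}$ really are abelian groups. Then (ET2) reduces to two points, namely well-definedness of $\mathfrak{r}$ (Proposition \ref{independentOfRep}, using Lemma \ref{shortExactseq1} to bridge between two choices of representative) and additivity (Proposition \ref{addreal}, where the split $\mathbb{F}$-extension is sent to the obvious split sequence in $\tilde{\mathscr{C}}$ and the direct sum of idempotents realises the direct sum of realisations). For (ET3) and its dual, a commutative square in $\tilde{\mathscr{C}}$ lifts to a commutative square in $\mathscr{C}$ which is then filled using (ET3) in $\mathscr{C}$; to convert the resulting filler into a morphism in $\tilde{\mathscr{C}}$ one sandwiches it between the relevant idempotents, and this is exactly Proposition \ref{propet3}.

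The main obstacle is (ET4). My strategy would be: apply (ET4) in $\mathscr{C}$ to the underlying $\mathbb{E}$-triangles to obtain an object $E$, morphisms $d,e,h,h^{\prime}$ and an $\mathbb{E}$-extension $\d^{\prime \prime} \in \mathbb{E}(E,A)$ with the three classical compatibilities. The first genuinely new task is to produce an idempotent $w$ on $E$ so that $\d^{\prime \prime}$ lies in $\mathbb{F}((E,w),(A,q))$; this is done by first choosing $w$ compatibly with $q$ and $s$ via (ET3) in $\mathscr{C}$, then promoting it to an idempotent via Lemma \ref{IdemFillingMorph2}, and finally verifying $w^{*}q_{*}\d^{\prime \prime} = \d^{\prime \prime}$ using the exact sequences from Lemma \ref{rightClosed} applied along the $\mathbb{F}$-triangle of $\d$. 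The remaining morphisms $\bar{d} \colon (D,p) \to (E,w)$ and $\bar{e} \colon (E,w) \to (F,t)$, together with the $\mathbb{F}$-triangle realising $(pf^{\prime})_{*}\d^{\prime}$, are constructed by two successive applications of the mapping cone type result Corollary \ref{mappingCorollary}, followed by Proposition \ref{dirSum} and Corollary \ref{closedUnderIso2} to strip off the split summand and fix signs. This is Proposition \ref{propet4}; the dual of (ET4) is formally analogous.

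Finally, for the assertion that $i_{\mathscr{C}}$ is an extriangulated functor, one observes that $\mathbb{F}((C,1),(A,1)) = \mathbb{E}(C,A)$ so the natural transformation $\Gamma_{(C,A)}$ can be taken to be the identity, and compatibility with $\mathfrak{s}$ and $\mathfrak{r}$ is immediate from the definition of $\mathfrak{r}$ applied to extensions whose flanking idempotents are identities; this is Proposition \ref{iExtFunct}. Combining Propositions \ref{biadd1}, \ref{biadd2}, \ref{addreal}, \ref{propet3}, \ref{propet4} and \ref{iExtFunct} then yields the theorem.
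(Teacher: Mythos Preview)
Your proposal is correct and matches the paper's approach exactly: the paper's proof is precisely the one-line citation of Propositions \ref{biadd1}, \ref{biadd2}, \ref{addreal}, \ref{propet3}, \ref{propet4} and \ref{iExtFunct}, and your expanded summary of what each of those propositions accomplishes (including the role of Lemma \ref{IdemFillingMorph2}, Lemma \ref{rightClosed}, Corollary \ref{mappingCorollary}, Proposition \ref{dirSum} and Corollary \ref{closedUnderIso2} inside the proof of (ET4)) accurately reflects their content.
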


\subsection{Weak idempotent completion.}

\begin{definition}\cite[1.12]{Neeman}.
Let $\mathscr{A}$ be a small additive category. The \textit{weak idempotent completion} of $\mathscr{A}$ is denoted by $\hat{\mathscr{A}}$ and is defined as follows. The objects of $\hat{\mathscr{A}}$ are the pairs $(A,p)$ where $A$ is an object of $\mathscr{A}$ and $p \colon A \rightarrow A$ is an idempotent factoring as $p=cr$ for some retraction $r \colon A \rightarrow X$ and some section $c \colon X \rightarrow A$ with $rc=1_X$ (i.e. $p$ is a split idempotent). A morphism in $\hat{\mathscr{A}}$ from $(A,p)$ to $(B,q)$ is a morphism $\sigma \colon A \rightarrow B \in \mathscr{A}$ such that $\sigma p = q \sigma = \sigma$. 
\end{definition}
There is fully faithful additive functor $j_{\mathscr{A}} \colon \mathscr{A} \rightarrow \hat{\mathscr{A}}$ from $\mathscr{A}$ to its weak idempotent completion defined as follows. For an object $A$ in $\mathscr{A}$, we have that $j_{\mathscr{A}}(A) = (A,1_A)$, and for a morphism $f$ in $\mathscr{C}$, we have that $j_{\mathscr{A}}(f)=f$.

The following lemma is key in proving the main theorem for this subsection. It is an analogue of Lemma \ref{IdemFillingMorph} where we replace the idempotent morphisms with split idempotent morphisms.

\begin{lemma}\label{SplitIdemFillingMorph} Let $(\mathscr{A}, \mathbb{G},\mathfrak{t})$ be a triple satisfying (ET1) and (ET2). Let $A,B \text{ and }C$ be objects of $\mathscr{A}$. Let $\d$ be an extension in $\mathbb{G}(C,A)$ with $\mathfrak{t}(\d) = [ A \overset{a}{\longrightarrow} B \overset{b}{\longrightarrow} C]$. Let $(e,f) \colon \d \rightarrow \d$  be a morphism of $\mathbb{G}$-extensions where $e \colon A \rightarrow A$ and $f \colon C \rightarrow C$ are idempotent morphisms that split.  Then there exists an idempotent morphism $g \colon B \rightarrow B$ that splits such that the triple $(e,g,f)$ realises the morphism of $\mathbb{G}$-extensions $(e,f)$. 
\begin{center}
\begin{tikzcd}
A \arrow[r,"a"] \arrow[d,"e"]
& B \arrow[r,"b"] \arrow[d, dashed, "g"]
& C \arrow[d,"f"]
\\
A \arrow[r,"a"]
& B \arrow [r,"b"]
& C
\end{tikzcd}
\end{center}
\begin{proof}
Since $e$ splits, there is an object $X \in \mathscr{A}$ and morphisms $e_{2} \colon A \rightarrow X$ and $e_{1} \colon X \rightarrow A$ such that $e=e_{1}e_{2}$ and $e_{2}e_{1}=1_{X}$. Likewise, for $f$ there is an object $Z \in \mathscr{A}$ and morphisms $f_{2} \colon C \rightarrow Z$ and $f_{1} \colon Z \rightarrow C$ such that $f=f_{1}f_{2}$ and $f_{2}f_{1}=1_{Z}$. 

Suppose that $\mathfrak{t}(e_{2*}f_{1}^{*}\d) = [ X \overset{x}{\longrightarrow} Y \overset{y}{\longrightarrow} Z]$. Consider the following diagram of $\mathbb{G}$-triangles. 
\begin{equation}\label{2by2}
\begin{tikzcd}
X \arrow[r,"x"] \arrow[d,"e_{1}"]
& Y \arrow[r, "y"] \arrow[d, dashed, "r_{1}"]
& Z \arrow[r,dashed, "e_{2*}f_{1}^{*}\d"] \arrow[d,"f_{1}"]
& \text{}
\\
A \arrow[r,"a"]  \arrow[d,"e_{2}"] 
& B \arrow[r,"b"] \arrow[d,dashed, "r_{2}"] 
& C \arrow[r,dashed, "\d"] \arrow[d,"f_{2}"]
& \text{}
\\
X \arrow[r,"x"] 
& Y \arrow[r, "y"] 
& Z \arrow[r,dashed, "e_{2*}f_{1}^{*}\d"] 
& \text{}
\end{tikzcd}
\end{equation}
Observe that $$e_{1*}(e_{2*}f_{1}^{*}\d) = e_{*}f_{1}^{*}\d = f_{1}^{*}(e_{*}\d) = f_{1}^{*}(f^{*}\d) = f_{1}^{*}(f_{1}f_{2})^{*}\d = ((f_{2}f_{1})f_{1})^{*}\d = f_{1}^{*}\d.$$ Therefore $(e_{1},f_{1}) \colon e_{2*}f_{1}^{*}\d \rightarrow \d$ is a morphism of $\mathbb{G}$-extensions. 
So, by the axiom (ET2), there exists a morphism $r_{1} \colon Y \rightarrow B$ such that the top row of the above diagram commutes. 
Also observe that $$f_{2}^{*}(e_{2*}f_{1}^{*}\d) = e_{2*}f_{2}^{*}f_{1}^{*}\d = e_{2*}(f^{*}\d) = e_{*2}(e_{*}\d)= ((e_{2}e_{1})e_{2})_{*}\d = e_{2*}\d.$$ Therefore $(e_{2}, f_{2}) \colon \d \rightarrow e_{2*}f_{1}^{*}\d$ is a morphism of $\mathbb{G}$-extensions. So by axiom (ET2), there exists a morphism $r_{2} \colon B \rightarrow Y$ such that the bottom row of the above diagram commutes. Collapsing the above diagram into the diagram below, we obtain the following morphism of $\mathbb{G}$-triangles and commutative diagram.

\begin{equation}\label{automorph}
\begin{tikzcd}[column sep=3.0em,row sep=3.0em]
X \arrow[r,"x"] \arrow[d,equal, "e_{2}e_{1} = 1_{X}"]
& Y \arrow[r, "y"] \arrow[d,"r_{2}r_{1}"]
& Z \arrow[r,dashed, "e_{2*}f_{1}^{*}\d"] \arrow[d,equal,"f_{2}f_{1} = 1_{Z}"]
& \text{}
\\
X \arrow[r,"x"] 
& Y \arrow[r, "y"] 
& Z \arrow[r,dashed, "e_{2*}f_{1}^{*}\d"] 
& \text{}
\end{tikzcd}
\end{equation}
By Lemma \ref{2out3iso},  the morphism $r_{2}r_{1}$ is an automorphism of $Y$. That is to say, there exists $h \colon Y \rightarrow Y$ such that $r_{2}r_{1}h = 1_{Y}$ and $hr_{2}r_{1} = 1_{Y}$. Set $g := (r_{1}h)r_{2} \colon B \rightarrow B$. 
Observe that $$g^{2} = r_{1}h(r_{2}r_{1}h)r_{2} =  r_{1}h(1_{Y})r_{2} = r_{1}hr_{2} = g,$$
so $g$ is an idempotent morphism. Moreover, 
$$r_{2}(r_{1}h) = 1_{Y}.$$
So $g$ is in fact a split idempotent. Now consider the following diagram.
\begin{equation}\label{idemSplitFillingdiag1}
\begin{tikzcd}
A \arrow[r,"a"] \arrow[d,"e"]
& B \arrow[r,"b"] \arrow[d, dashed, "g"]
& C \arrow[d,"f"]
\\
A \arrow[r,"a"]
& B \arrow [r,"b"]
& C
\end{tikzcd}
\end{equation}
Note that, by the commutativity of diagram (\ref{automorph}), $$r_{2}r_{1}x = x$$ so $$x= (hr_{2}r_{1})x= hx,$$
and similarly  
$$yr_{2}r_{1} = y,$$ so $$y = y(r_{2}r_{1}h) = yh.$$ Using the fact that diagram (\ref{2by2}) commutes, we further observe that  
$$ga =r_{1}h(r_{2}a) = r_{1}(hx)e_{2} = (r_{1}x)e_{2} = a(e_{1}e_{2}) = ae,$$
and 
$$bg = (br_{1})hr_{2} = f_{1}(yh)r_{2} = (f_{1}y)r_{2} = (f_{1}f_{2})b = fb,$$
so diagram (\ref{idemSplitFillingdiag1}) commutes. This completes the proof. 
\end{proof}
\end{lemma}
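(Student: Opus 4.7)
The plan is to upgrade the strategy of Lemma \ref{IdemFillingMorph} by exploiting the splitting data rather than merely producing an idempotent filler. Write $e = e_1 e_2$ with $e_2 e_1 = 1_X$ and $f = f_1 f_2$ with $f_2 f_1 = 1_Z$, and form the ``half-way'' extension $\eta := e_{2*} f_1^* \d \in \mathbb{G}(Z,X)$. Choose a realisation $\mathfrak{t}(\eta) = [X \overset{x}{\to} Y \overset{y}{\to} Z]$. A short functorial calculation, using repeatedly that $e_1 e_2 e_1 = e_1$ and $f_1 f_2 f_1 = f_1$ (and their duals), shows that both $(e_1, f_1) \colon \eta \to \d$ and $(e_2, f_2) \colon \d \to \eta$ are morphisms of $\mathbb{G}$-extensions.

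Next, apply axiom (ET2) to each of these morphisms of extensions to obtain fillers $r_1 \colon Y \to B$ and $r_2 \colon B \to Y$. Vertically composing the two resulting diagrams produces a morphism of $\mathbb{G}$-triangles from $X \to Y \to Z$ to itself whose outer components are $e_2 e_1 = 1_X$ and $f_2 f_1 = 1_Z$. By Lemma \ref{2out3iso}, $r_2 r_1$ must then be an automorphism of $Y$; let $h$ be its two-sided inverse.

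Then I define $g := r_1 h r_2 \colon B \to B$ and verify it has all the required properties. Idempotency is immediate: $g^2 = r_1 h (r_2 r_1 h) r_2 = r_1 h r_2 = g$. Splitting is also immediate from the factorisation $g = (r_1 h) \circ r_2$ together with $r_2 \circ (r_1 h) = r_2 r_1 h = 1_Y$, which exhibits $Y$ as the splitting object. The relations $g a = a e$ and $b g = f b$ follow by a short chase in the commutative squares produced by (ET2), using the identities $x = h x$ and $y = y h$ which themselves come from the automorphism diagram giving $h$.

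The main obstacle is locating the correct auxiliary extension to realise, namely $\eta = e_{2*} f_1^* \d$; once this is in hand, the argument is driven entirely by (ET2) and the $2$-out-of-$3$ principle for isomorphisms of triangles. The contrast with the proof of Lemma \ref{IdemFillingMorph} is that there, one corrects a first filler by adding a term factoring through the weak cokernel, which produces an idempotent with \emph{no} control over splitting; here, one instead produces a \emph{conjugation-style} correction by an automorphism, which automatically forces the resulting idempotent to split through the object $Y$ that realises $\eta$.
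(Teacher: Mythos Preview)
Your proposal is correct and follows essentially the same approach as the paper: you introduce the same auxiliary extension $\eta = e_{2*}f_1^{*}\d$, obtain the fillers $r_1, r_2$ via (ET2), use the $2$-out-of-$3$ lemma to make $r_2 r_1$ an automorphism with inverse $h$, and set $g = r_1 h r_2$ with the same verifications of idempotency, splitting through $Y$, and commutativity via $hx=x$, $yh=y$. Your closing paragraph contrasting this ``conjugation-style'' correction with the additive correction in Lemma~\ref{IdemFillingMorph} is a nice gloss on why splitting is preserved here but not there.
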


\begin{definition}\cite[Definition 2.17]{NakaokaPalu}. Let $(\mathscr{M}, \mathbb{G}, \mathfrak{t})$ be an extriangulated category and $\mathscr{N}$ be a full additive subcategory closed under isomorphisms. The subcategory $\mathscr{N}$ is said to be \textit{extension-closed} if, for any conflation $A \rightarrow B \rightarrow C$ with $A,C \in \mathscr{N}$, we have that $B \in \mathscr{N}.$
\end{definition}

\begin{lemma}\label{extClosedSub}\cite[Remark 2.18]{NakaokaPalu}. Let $(\mathscr{M}, \mathbb{G}, \mathfrak{t})$ be an extriangulated category and let $\mathscr{N}$
be an extension-closed subcategory of $(\mathscr{M}, \mathbb{G}, \mathfrak{t})$. Let $\mathbb{G}_{\mathscr{N}}$ and $\mathfrak{t}_{\mathscr{N}}$ be the restrictions $\mathbb{G}$ and $\mathfrak{t}$ to $\mathscr{N}^{\text{op}} \times \mathscr{N}$. Then $(\mathscr{N}, \mathbb{G}_{N}, \mathfrak{t}_{\mathscr{N}})$ is an extriangulated category. 
\end{lemma}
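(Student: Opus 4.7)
The plan is to verify each of the six axioms (ET1), (ET2), (ET3), (ET3)$^{\text{op}}$, (ET4), (ET4)$^{\text{op}}$ for the triple $(\mathscr{N}, \mathbb{G}_{\mathscr{N}}, \mathfrak{t}_{\mathscr{N}})$, using throughout the key feature that extension-closedness forces the middle term of any realising conflation to lie in $\mathscr{N}$ whenever the outer terms do.

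First I would dispatch (ET1): since $\mathscr{N}$ is a full additive subcategory of $\mathscr{M}$ and $\mathbb{G}$ is biadditive, the restriction $\mathbb{G}_{\mathscr{N}} \colon \mathscr{N}^{\text{op}} \times \mathscr{N} \rightarrow Ab$ is biadditive. For (ET2), take any $\mathbb{G}_{\mathscr{N}}$-extension $\d \in \mathbb{G}(C,A)$ with $A,C \in \mathscr{N}$ and write $\mathfrak{t}(\d) = [A \rightarrow B \rightarrow C]$. Extension-closedness forces $B \in \mathscr{N}$, and $\mathscr{N}$ being closed under isomorphisms ensures any other representative of this equivalence class lies in $\mathscr{N}$, so $\mathfrak{t}_{\mathscr{N}}(\d) := \mathfrak{t}(\d)$ is a well-defined equivalence class of sequences in $\mathscr{N}$. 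Additivity is inherited: the split $\mathbb{G}_{\mathscr{N}}$-extension in $\mathbb{G}(C,A)$ is realised by the same split sequence (whose middle term $A \oplus C$ is in $\mathscr{N}$), and direct sums of sequences in $\mathscr{N}$ stay in $\mathscr{N}$.

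For (ET3), suppose we are given a commutative square in $\mathscr{N}$ between the left halves of two $\mathbb{G}_{\mathscr{N}}$-triangles. Applying (ET3) in $\mathscr{M}$ yields a morphism $c$ in $\mathscr{M}$ between the third terms, both of which already lie in $\mathscr{N}$; fullness of $\mathscr{N}$ upgrades $c$ to a morphism of $\mathscr{N}$, and the realisation condition $(a,b,c)$ transfers verbatim. Axiom (ET3)$^{\text{op}}$ is dual.

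The main obstacle, as expected, is (ET4). Given $\mathbb{G}_{\mathscr{N}}$-extensions $\d \in \mathbb{G}(D,A)$ and $\d' \in \mathbb{G}(F,B)$ with $A,B,D,F \in \mathscr{N}$, realised by sequences $A \rightarrow B \rightarrow D$ and $B \rightarrow C \rightarrow F$, the former sequence already has $B \in \mathscr{N}$ by hypothesis, and the latter sequence gives $C \in \mathscr{N}$ by extension-closedness. Applying (ET4) in $\mathscr{M}$ produces an object $E \in \mathscr{M}$, a commutative $3 \times 3$ diagram, a $\mathbb{G}$-extension $\d'' \in \mathbb{G}(E,A)$ realised by $A \rightarrow C \rightarrow E$, and compatibilities (i)--(iii). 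The crucial step is membership $E \in \mathscr{N}$: compatibility (i) gives a conflation $D \rightarrow E \rightarrow F$ in $\mathscr{M}$ whose outer terms lie in $\mathscr{N}$, so extension-closedness delivers $E \in \mathscr{N}$. Consequently $\d''$ lies in $\mathbb{G}_{\mathscr{N}}(E,A)$ with realising sequence $A \rightarrow C \rightarrow E$ entirely inside $\mathscr{N}$, the structure morphisms $d \colon D \rightarrow E$ and $e \colon E \rightarrow F$ are morphisms in $\mathscr{N}$ by fullness, and the compatibilities (i)--(iii) are equalities in the abelian groups $\mathbb{G}(-,-) = \mathbb{G}_{\mathscr{N}}(-,-)$ on the relevant pairs. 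Axiom (ET4)$^{\text{op}}$ follows by the dual argument.
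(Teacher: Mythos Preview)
Your argument is correct and is the standard direct verification. The paper itself does not supply a proof of this lemma: it is stated with attribution to \cite[Remark 2.18]{NakaokaPalu} and used as a black box. One small remark: in your (ET2) paragraph you verify well-definedness and additivity of $\mathfrak{t}_{\mathscr{N}}$ but do not explicitly check the realisation condition $(\circ)$ (that every morphism of $\mathbb{G}_{\mathscr{N}}$-extensions lifts to a morphism between representing sequences); this, however, follows by precisely the fullness argument you spell out for (ET3), so the omission is harmless.
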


Let $(\mathscr{C},\mathbb{E},\mathfrak{s})$ be an extriangulated category such that $\mathscr{C}$ is small. We have shown that the idempotent completion $\tilde{\mathscr{C}}$ is also an extriangulated category. Now consider the weak idempotent completion $\hat{\mathscr{C}}$. From the definition of the idempotent completion and the definition of the weak idempotent completion, it is easy to see that the weak idempotent completion $\hat{\mathscr{C}}$ (Definition \ref{weaklyIdemComp})  is a full subcategory of the idempotent completion $\tilde{\mathscr{C}}$ (Definition \ref{karoubian}). As we will see in the following proposition, the weak idempotent completion $\hat{\mathscr{C}}$ is extension-closed as a subcategory of the idempotent completion $\tilde{\mathscr{C}}$.\\

\begin{proposition}\label{weakExtClosed} Let $(\mathscr{C},\mathbb{E},\mathfrak{s})$ be an extriangulated category such that $\mathscr{C}$ is small and $(\tilde{\mathscr{C}}, \mathbb{F}, \mathfrak{r})$ be its idempotent completion. Then the weak idempotent completion $\hat{\mathscr{C}}$ of $\mathscr{C}$ is an extension-closed subcategory of $\tilde{\mathscr{C}}$. 

\begin{proof}
By Definitions \ref{karoubian}, \ref{weaklyIdemComp}, it is easy to see that $\hat{\mathscr{C}}$ is a full additive subcategory of $\tilde{\mathscr{C}}$. Now consider two  objects $(A,q) \in \hat{\mathscr{C}}$ and $(B,p) \in \tilde{\mathscr{C}}$. Suppose there exists an isomorphism $f \colon (A,q) \rightarrow (B,p) \in \tilde{\mathscr{C}}$. We will show that $(B,p)$ is also in $\hat{\mathscr{C}}$. Since $(A,q) \in \hat{\mathscr{C}}$, we have that $q$ is a split idempotent, which is to say there exists an object $X \in \mathscr{C}$ such that there are morphisms $q_{2} \colon A \rightarrow X,  q_{1} \colon X \rightarrow A$ satisfying $q =q_{1}q_{2}$ and $q_{2}q_{1} = 1_{X}$. 

Since  $f \colon (A,q) \rightarrow (B,p) \in \tilde{\mathscr{C}}$ is an isomorphism, there exists a morphism $f^{-1} \colon (B,p) \rightarrow (A,q)$ such that $ff^{-1} = p$ and $f^{-1}f=q$. Moreover $f$ satisfies the relations $fq = pf=f$ since it is a morphism in $\tilde{\mathscr{C}}$. Set $p_{1} = fq_{1} \colon X \rightarrow B$ and $p_{2} = q_{2}f^{-1}  \colon B \rightarrow X$, then we observe that $p = p_{1}p_{2}$ and $p_{2}p_{1} = 1_{X}.$
Hence, the idempotent $p$ splits, so $(B,p) \in \hat{\mathscr{C}}$. We conclude that $\hat{\mathscr{C}}$ is closed under isomorphisms. 

Let $(A,q)$ and $(C,p)$ be objects in $\hat{\mathscr{C}}$ and consider a conflation $(A,q) \overset{u^{\prime}}{\longrightarrow} (M,m) \overset{v^{\prime}}{\longrightarrow} (C,p)$  of some $\mathbb{F}$-extension $\d = p^{*}q_{*}\varepsilon$ in $\mathbb{F}((C,p),(A,q))$. We will show that $(M,m)$ is also in $\hat{\mathscr{C}}$. By Lemma \ref{standForm}, $u^{\prime} = uq$ and $v^{\prime} = pv$ for some $u \colon A \rightarrow M$ and $v \colon M \rightarrow C$, so
$$ \mathfrak{r}(\d) = [(A,q) \overset{uq}{\longrightarrow} (M,m) \overset{pv}{\longrightarrow} (C,p)].$$
Suppose that $$\mathfrak{s}(\d)=[ A \overset{x}{\longrightarrow} B \overset{y}{\longrightarrow} C]$$ then $$\mathfrak{r}(\d) = [(A,q) \overset{xq}{\longrightarrow} (B,r) \overset{py}{\longrightarrow} (C,p)].$$

By definition we have the following equivalence, 
\begin{center}
\begin{tikzcd}
(A,q) \arrow[r,"uq"] \arrow[d,equal]
& (M,m) \arrow[r,"pv"] \arrow[d,"h"]
& (C,p) \arrow[d,equal]
\\
(A,q) \arrow[r,"xq"]
& (B,r) \arrow[r,"py"]
& (C,p)
\end{tikzcd}
\end{center}
for some isomorphism $h \colon (M,m) \rightarrow (B,r)$. Since $(A,q), (C,p) \in \hat{\mathscr{C}}$, we have that the idempotents $q$ and $p$ are split idempotents. Since $ q_{*}\d = p^{*}\d$ we have that $(q,p) \colon \d \rightarrow \d$ is a morphism of $\mathbb{F}$-extensions. Therefore by Lemma \ref{SplitIdemFillingMorph}, there exists a split idempotent $w \colon B \rightarrow B$ such that $wx = xq$ and $py = yw$. By Proposition \ref{independentOfRep}, we have the following equivalence 
\begin{center}
\begin{tikzcd}
(A,q) \arrow[r,"xq"] \arrow[d,equal]
& (B,r) \arrow[r,"py"] \arrow[d,"g"]
& (C,p) \arrow[d,equal]
\\
(A,q) \arrow[r,"xq"]
& (B,w) \arrow[r,"py"]
& (C,p)
\end{tikzcd}
\end{center}
for some isomorphism $g \colon (B,r) \rightarrow (B,w)$. The composition $gh \colon (M,m) \rightarrow (B,w)$ is an isomorphism. Moreover $(B,w)$ lies in $\hat{\mathscr{C}}$ since $w$ is a split idempotent, so it follows that $(M,m) \in \hat{\mathscr{C}}$ since $\hat{\mathscr{C}}$ is closed under isomorphims. This completes the proof. 
\end{proof}
\end{proposition}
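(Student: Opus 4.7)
The plan is to verify the three conditions packaged into the definition of an extension-closed subcategory: that $\hat{\mathscr{C}}$ is a full additive subcategory of $\tilde{\mathscr{C}}$, that it is closed under isomorphisms in $\tilde{\mathscr{C}}$, and that the middle term of any conflation in $\tilde{\mathscr{C}}$ with outer terms in $\hat{\mathscr{C}}$ lies in $\hat{\mathscr{C}}$. The first condition is immediate from the definitions of $\tilde{\mathscr{C}}$ and $\hat{\mathscr{C}}$, since both categories share morphism sets and biproducts and the class of split idempotents is stable under direct sums.

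For closure under isomorphisms, I would start with $(A,q) \in \hat{\mathscr{C}}$, a chosen splitting $q = q_{1} q_{2}$, $q_{2} q_{1} = 1_{X}$, and an isomorphism $f \colon (A,q) \to (B,p)$ in $\tilde{\mathscr{C}}$ with inverse $f^{-1}$. Using the relations $f^{-1} f = q$, $f f^{-1} = p$, and $fq = pf = f$ built into the definition of morphisms of $\tilde{\mathscr{C}}$, I would set $p_{1} \defeq f q_{1}$ and $p_{2} \defeq q_{2} f^{-1}$ and check directly that $p_{1} p_{2} = p$ and $p_{2} p_{1} = 1_{X}$, which exhibits $p$ as a split idempotent.

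The heart of the argument is extension-closure. Let $(A,q) \overset{u'}{\longrightarrow} (M,m) \overset{v'}{\longrightarrow} (C,p)$ be a conflation in $\tilde{\mathscr{C}}$ realising some $\mathbb{F}$-extension $\d$ with $(A,q),(C,p) \in \hat{\mathscr{C}}$. By Lemma \ref{standForm}, after writing $u' = uq$ and $v' = pv$ and unpacking $\mathfrak{s}(\d) = [A \overset{x}{\longrightarrow} B \overset{y}{\longrightarrow} C]$, the given sequence is equivalent in the sense of Definition \ref{equivReln} to $[(A,q) \overset{xq}{\longrightarrow} (B,r) \overset{py}{\longrightarrow} (C,p)] = \mathfrak{r}(\d)$, so $(M,m)$ is isomorphic in $\tilde{\mathscr{C}}$ to $(B,r)$. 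By the closure under isomorphisms already established, it suffices to show $(B,r) \in \hat{\mathscr{C}}$, i.e.\ that $r$ can be chosen to be a split idempotent.

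Here the key observation is that $q_{*}\d = p^{*}\d$, so $(q,p) \colon \d \to \d$ is a morphism of $\mathbb{F}$-extensions between the split idempotents $q$ and $p$. Applying Lemma \ref{SplitIdemFillingMorph} in place of Lemma \ref{IdemFillingMorph} produces a \emph{split} idempotent $w \colon B \to B$ with $wx = xq$ and $yw = py$. The well-definedness of $\mathfrak{r}$ in Proposition \ref{independentOfRep} then gives an isomorphism $(B,r) \cong (B,w)$ in $\tilde{\mathscr{C}}$, and $(B,w) \in \hat{\mathscr{C}}$ by construction of $w$. Chaining $(M,m) \cong (B,r) \cong (B,w)$ through closure under isomorphisms concludes the proof. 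The one nontrivial ingredient is Lemma \ref{SplitIdemFillingMorph}, which upgrades the generic idempotent filling to a split one whenever the two outer idempotents are split; everything else in the proof is bookkeeping within the extriangulated structure already built on $\tilde{\mathscr{C}}$.
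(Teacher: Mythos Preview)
Your proposal is correct and follows essentially the same approach as the paper's own proof: both verify fullness and additivity from the definitions, prove closure under isomorphisms by transporting a splitting of $q$ along $f$ and $f^{-1}$, and establish extension-closure by reducing to the standard representative $(B,r)$ via Lemma~\ref{standForm}, then invoking Lemma~\ref{SplitIdemFillingMorph} and Proposition~\ref{independentOfRep} to replace $r$ by a split idempotent $w$.
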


\begin{theorem}
Let $(\mathscr{C},\mathbb{E},\mathfrak{s})$ be an extriangulated category such that $\mathscr{C}$ is small. Let $\hat{\mathscr{C}}$ be the weak idempotent completion of $\mathscr{C}$. Then $\hat{\mathscr{C}}$ is extriangulated. Moreover, the embedding $j_{\mathscr{C}} \colon \mathscr{C} \rightarrow \hat{\mathscr{C}}$ is an extriangulated functor.
\begin{proof}
Since the idempotent completion $\tilde{\mathscr{C}}$ is extriangulated and the weak idempotent completion $\hat{\mathscr{C}}$ is an extension-closed subcategory of $\tilde{\mathscr{C}}$ by Proposition \ref{weakExtClosed}, $\hat{\mathscr{C}}$ is also extriangulated by Lemma \ref{extClosedSub}. The other statement follows as an easy consequence of this and Theorem \ref{IdempCompExtri}.
\end{proof}
\end{theorem}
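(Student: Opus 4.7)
My plan is to deduce the theorem almost entirely from results already established for the idempotent completion. By Theorem~\ref{IdempCompExtri}, the triple $(\tilde{\mathscr{C}},\mathbb{F},\mathfrak{r})$ is an extriangulated category. Proposition~\ref{weakExtClosed} identifies $\hat{\mathscr{C}}$ as a full additive subcategory of $\tilde{\mathscr{C}}$ which is closed under isomorphisms and extension-closed. Applying Lemma~\ref{extClosedSub} to this inclusion then endows $\hat{\mathscr{C}}$ with an extriangulated structure given by the restrictions $\mathbb{F}|_{\hat{\mathscr{C}}^{\text{op}}\times \hat{\mathscr{C}}}$ and $\mathfrak{r}|_{\hat{\mathscr{C}}}$. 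So the first assertion reduces to citing these three results in sequence.

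For the second assertion, I would exhibit $j_{\mathscr{C}}$ as a corestriction of $i_{\mathscr{C}}$. Since $1_A$ is trivially a split idempotent for every $A \in \mathscr{C}$, the object $(A,1_A)$ lies in $\hat{\mathscr{C}}$, so $j_{\mathscr{C}}$ is well-defined, fully faithful and additive, and composing with the inclusion $\hat{\mathscr{C}} \hookrightarrow \tilde{\mathscr{C}}$ recovers $i_{\mathscr{C}}$. I would then reuse the natural transformation $\Gamma_{(C,A)}(\delta)=\delta$ constructed in the proof of Proposition~\ref{iExtFunct}, now regarded as landing in $\mathbb{F}|_{\hat{\mathscr{C}}^{\text{op}}\times\hat{\mathscr{C}}}((C,1_C),(A,1_A))=\mathbb{E}(C,A)$. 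The naturality square and the realisation condition $\mathfrak{r}(\Gamma_{(C,A)}(\delta))=[j_{\mathscr{C}}(A)\to j_{\mathscr{C}}(B)\to j_{\mathscr{C}}(C)]$ then hold verbatim as in Proposition~\ref{iExtFunct}, because the restricted realisation on $\hat{\mathscr{C}}$ agrees with $\mathfrak{r}$ on $\mathbb{F}$-extensions with endpoints of the form $(A,1_A)$ and $(C,1_C)$.

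I do not anticipate any genuine obstacle in this final theorem: all the real content sits in Proposition~\ref{weakExtClosed}, where the decisive input is Lemma~\ref{SplitIdemFillingMorph} allowing one to arrange the middle idempotent $w$ of the realising sequence to split, and in Theorem~\ref{IdempCompExtri}. The only point that deserves a brief sanity check is that $\mathfrak{r}|_{\hat{\mathscr{C}}}$ really does take values in equivalence classes of sequences in $\hat{\mathscr{C}}$, but this is exactly what extension-closedness of $\hat{\mathscr{C}}$ in $\tilde{\mathscr{C}}$ supplies, since then the middle term of $\mathfrak{r}(\delta)$ for $\delta \in \mathbb{F}((C,p),(A,q))$ with $(A,q),(C,p)\in \hat{\mathscr{C}}$ automatically lies in $\hat{\mathscr{C}}$.
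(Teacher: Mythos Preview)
Your proposal is correct and follows exactly the same route as the paper: invoke Theorem~\ref{IdempCompExtri} for $\tilde{\mathscr{C}}$, Proposition~\ref{weakExtClosed} for extension-closedness of $\hat{\mathscr{C}}$, and Lemma~\ref{extClosedSub} to conclude; then obtain the extriangulated functor statement by restricting the natural transformation from Proposition~\ref{iExtFunct}. Your additional remarks about why $\mathfrak{r}|_{\hat{\mathscr{C}}}$ lands in $\hat{\mathscr{C}}$ and why $j_{\mathscr{C}}$ is the corestriction of $i_{\mathscr{C}}$ simply spell out what the paper leaves as ``an easy consequence.''
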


\bibliographystyle{alpha}
	\bibliography{IdempotentCompletion}
\end{document}